\theoremstyle{plain}
\newtheorem{theorem}{Theorem}[section]
\newtheorem{lemma}[theorem]{Lemma}
\newtheorem{proposition}[theorem]{Proposition}
\newtheorem{corollary}[theorem]{Corollary}
\theoremstyle{definition}
\newtheorem{remark}[theorem]{Remark}
\numberwithin{equation}{section}
\newcommand{\R}{\mathbb{R}}
\title[On negative eigenvalues]{On negative eigenvalues of the spectral problem for water waves of highest amplitude}
\author{Vladimir Kozlov$^1$, Evgeniy Lokharu$^1$}
\begin{document}
	
\begin{abstract} We consider a spectral problem associated with steady water waves of extreme form on the free surface of a rotational flow. It is proved that the spectrum of this problem contains arbitrary large negative eigenvalues and they are simple. Moreover, the asymptotics of such eigenvalues is obtained.

\end{abstract}

\maketitle

\section{Introduction} \label{s:introduction}

Extreme waves (or, equivalently, ``waves of greatest height'' according to Stokes) are remarkable objects in the mathematical theory of water waves. These are normally large-amplitude travelling waves with sharp crests of included angle $120^\circ$, see Figure 1A. Extreme waves were conjectured by Stokes already in 1880s. In \cite{Stokes49} Stokes considered periodic solutions to the water wave problem with a fixed wavelength and assumed that such waves can be parametrized by the wave height $\sup_{x \in \R} \eta(x) - \inf_{x \in \R} \eta(x)$, where $\eta$ is the surface profile. Later he conjectured in \cite{Stokes80} that the family of periodic waves contains the ``\textit{wave of greatest height}'' with surface stagnation, distinguished by sharp crests of included angle $120^\circ$. Stokes also argued that the stagnation by itself forces the surface profile to have a sharp crest of included angle $120^\circ$. This property is known as the \textit{Stokes conjecture} about waves of greatest height, which stimulated the development of the theory for many years. It is reasonable to divide the Stokes conjecture into two parts: (i) there exists a travelling solution of the water wave problem that enjoys stagnation at every crest; (ii) Every solution from (i) with surface profile $\eta$ must satisfy
	\[
	\lim_{x \to x_0\pm} \eta_x(x) = \mp \frac{1}{\sqrt{3}}
	\]
at every stagnation point $(x_0,\eta(x_0))$; this corresponds to the included angle $120^\circ$ as illustrated in Figure 1.
Both statements were complicated problems for the time because solutions with surface stagnation points are large-amplitude waves and could not be analysed by the classical perturbation methods. The first existence of Stokes waves that are close to the stagnation is due to Keady and Norbury \cite{KeadyNorbury78}, who used a global bifurcation theory for positive operators applied to the Nekrasov equation. Thus, one could think of proving (i) by passing to the limit along a sequence of waves approaching stagnation. This was done by Toland \cite{Toland1978a} in 1978 for the infinite depth case and by Amick and Toland \cite{AmickToland81a} for waves of finite depth. The second part (ii) for Stokes waves (periodic waves, symmetric around each crest and trough and monotone in between) was verified independently by Amick, Fraenkel and Toland in \cite{AmickFraenkelToland82} and by Plotnikov \cite{Plotnikov82}. Later, Plotnikov and Toland \cite{Plotnikov2004} proved the existence of irrotational waves of extreme form that are convex everywhere outside crests. The second part of the conjecture was refined by Varvaruca and Weiss in \cite{Varvaruca2011}, who proved (ii) for solutions under weak regularity assumptions and without any symmetry or monotonicity constraints. In particular, (ii) turned out to be a local property and is valid for the extreme solitary wave found in \cite{AmickToland81b}.

All previously mentioned results concerned irrotational water waves, while the case of waves with vorticity is much less studied. There is also a qualitative difference. In their study \cite{Varvaruca2012} Varvaruca and Weiss found (without proving the existence) that surface profiles near stagnation points are either Stokes corners ($120^\circ$), horizontally flat, or horizontal cusps, though it is not known if the last two options are possible. Regarding the first question (i), it was shown in \cite{Varvaruca09} that there exists a family of periodic solutions to the water wave problem with "negative" vorticity converging to an extreme wave enjoying stagnation at every crest. Unfortunately, it was not possible to show that the limiting wave is not "trivial", that is its surface is not a horizontal line. This difficulty was resolved in \cite{LokharuKoz2020} by using a different approach and extreme waves subject to (i) were found. A further analysis was made in \cite{Koz2020asymp}, where authors obtained higher-order asymptotics for the surface profile near stagnation points of type (ii). It was shown that vorticity affects the shape of an extreme wave near the stagnation point: it is convex for non-positive vorticities and concave otherwise. This observation is confirmed by several numerical studies, such as \cite{JOY2008} and \cite{Dyachenko2019a}.

Only Stokes and solitary waves were known until 1980, when Chen and Saffman \cite{Chen1980} numerically found new types of periodic waves of infinite depth bifurcating from near-extreme Stokes waves. The result was generalized by Vanden-Broeck \cite{Vanden_Broeck_1983} to the case of a finite depth. As in the infinite depth case some new "irregular" waves were found that bifurcate from regular Stokes waves. Such irregular waves have crests at different heights so that more than one crest is observed within the minimal period. A further analysis was made in \cite{VandenBroeck2017}. It was shown that there exist bifurcations of irregular waves that approach stagnation. The limiting wave has infinitely many oscillations and one sharp crest of included angle $120^\circ$. Irregular waves with an infinite period were found in \cite{VandenBroeck2013}. The only analytical study of irregular waves of infinite depth is by Buffoni, Dancer and Toland \cite{BuffoniDancerToland00b}. The authors investigated the global bifurcation continuum of waves with a fixed period $\Lambda$. The latter set is connected and contains an extreme wave in its closure. They proved that there are infinitely many points along the continuum which are either turning points or give rise to sub-harmonic bifurcations of waves whose minimal periods are integer multiples of $\Lambda$. Such new bifurcations of irregular waves occurs from Stokes waves that are close to stagnation, for which the associated spectral problem possesses a finite but arbitrary large number of negative eigenvalues.

The main subject of this paper is an analysis of the corresponding spectral problem for extreme Stokes waves in the case of finite depth and in the presence of vorticity. Here we cannot use the Nekrasov equation and the spectral problem is formulated in terms of a boundary value problem for a partial differential equation representing the first variation of the limit problem. We show that the spectrum of such problems contains negative eigenvalues with arbitrary large absolute values. We obtain also their asymptotics and simplicity of large negative eigenvalues. Our main Theorem 1.1 is formulated and proved in a more general form, where we allow for arbitrary singularities of coner type. An application for the water wave problem is given in Section \ref{SecWater}.

\begin{figure}[t!]
	\centering
	\begin{subfigure}[t]{0.5\textwidth}
		\centering
		\includegraphics[scale=0.8]{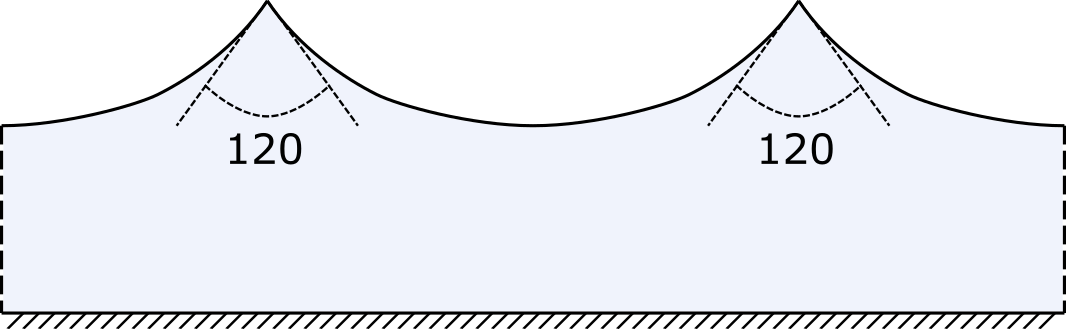}
		\caption{an extreme Stokes wave}
	\end{subfigure}%
	~ 
	\begin{subfigure}[t]{0.5\textwidth}
		\centering
		\includegraphics[scale=0.8]{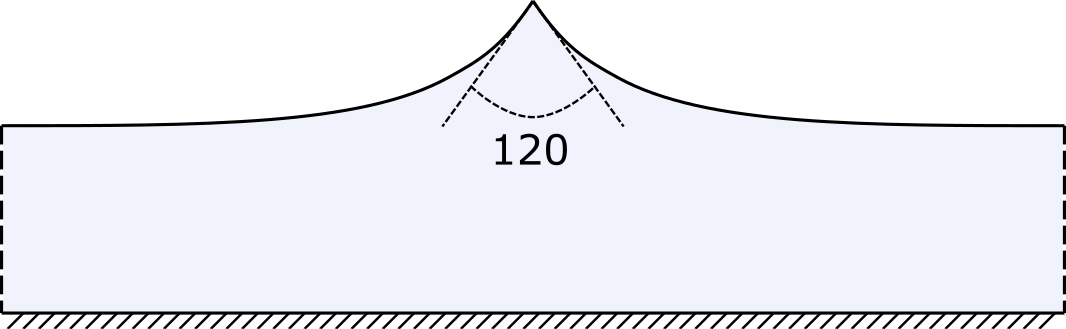} \caption{an extreme solitary wave}
	\end{subfigure}
	\caption{}
\end{figure}

\subsection{Formulation of the problem}

Let $\eta=\eta(x)$ be a positive, continuous and periodic  function on $\Bbb R$ of a period $\Lambda>0$. We assume that $\eta$ is even, i.e. $\eta(x)=\eta(-x)$, and that $\eta$ belongs to $C^{2}$  outside   the points $k\Lambda$, $k\in\Bbb Z$.
 Near the origin it has an asymptotics
\begin{equation}\label{Kk1}
\eta'(x)=-a_0+O(x^{\alpha}),\;\;\mbox{and}\;\;\eta''(x)=O(x^{\alpha-1})\;\;\mbox{for $x>0$}.
\end{equation}
Here $\alpha\in (0,1]$. Since the function $\eta$ is even the same expression with $-a_0$ replaced by $a_0$ is valid for negative $x$, and due to periodicity similar relations are true in a neighborhoods of the points $k\Lambda$.

It will be useful to introduce the angle $\alpha^*$ between the vertical line and the tangent to $\eta(x)$, $x\geq 0$, at the point $x=0$. It is defined by   $\alpha^*=\pi/2-\arctan a_0$.

Let
\begin{equation}\label{F24a}
\hat{D}=\{(x,y)\in\Bbb R^2\,:\,0<y<\eta(x)\},\;\;\hat{S}=\{(x,\eta(x))\,:\,x\in\Bbb R\},\;\;\hat{B}=\{(x,0)\,:\,x\in\Bbb R\}.
\end{equation}
Consider the following spectral  problem
\begin{eqnarray}\label{K2ax}
&&-\Delta u+\sigma(x)u=\lambda u\;\;\mbox{in $\hat{D}$}\nonumber\\
&&\partial_\nu u-r^{-1}\rho u=0\;\;\mbox{on $\hat{S}$}\nonumber\\
&&u=0\;\;\mbox{on $\hat{B}$}.
\end{eqnarray}
Here $r$ is the distance from $(x,y)$ to $(0,\eta(0))$,   $\sigma$ and $\rho$ are  $\Lambda$ periodic, even  functions, $\sigma$ is supposed to be bounded and $\rho$ is  $C^{1}$ outside the points $k\Lambda$, $k\in\Bbb Z$,  and
\begin{equation}\label{Kk1a}
\rho=\rho_0+O(x^{\alpha})\;\;\mbox{and}\;\;\frac{d\rho}{dx}=O(x^{-1+\alpha}).
\end{equation}
 It is assumed that $\rho_0>0$ and that
\begin{equation}\label{K5y}
\mu_1>1,
\end{equation}
where $\mu_1$ is the first positive root of the equation
\begin{equation}\label{K5}
\mu\tan\big(\mu\alpha^*\big)=-\rho_0.
\end{equation}
Since this root satisfies $\mu_1\alpha^*\in (\pi/2,\pi)$,
a sufficient condition for (\ref{K5y}) is $\alpha^*\leq \pi/2$.

We are looking for periodic, even functions $u$ in (\ref{K2ax}).

By our assumptions all functions $\eta$, $\sigma$, $\rho$ and $u$ are even with respect to vertical lines $x=k\Lambda/2$. If we introduce
$$
\Omega=\{(x,y)\in D\,:\,0<x<\Lambda/2\},\;\;S=\{(x,\eta(x))\,:\,0<x<\Lambda/2\},
$$
$$
B=\{(x,0)\,:\,0<x<\Lambda/2\},
$$
then the problem (\ref{K2ax}) can be reduced to the domain $\Omega$ (see Figure 2):
\begin{equation}\label{K2bb}
-\Delta u+\sigma u=\lambda u\;\;\mbox{in $\Omega$}
\end{equation}
and
\begin{eqnarray}\label{K2bbzz}
&&\partial_\nu u-r^{-1}\rho u=0\;\;\mbox{on $S$}\nonumber\\
&&u=0\;\;\mbox{on $B$}\nonumber\\
&&\partial_xu|_{x=0}=\partial_xu|_{x=\Lambda/2}=0.
\end{eqnarray}

\begin{figure}[t!]
	\centering
	\includegraphics[scale=0.8]{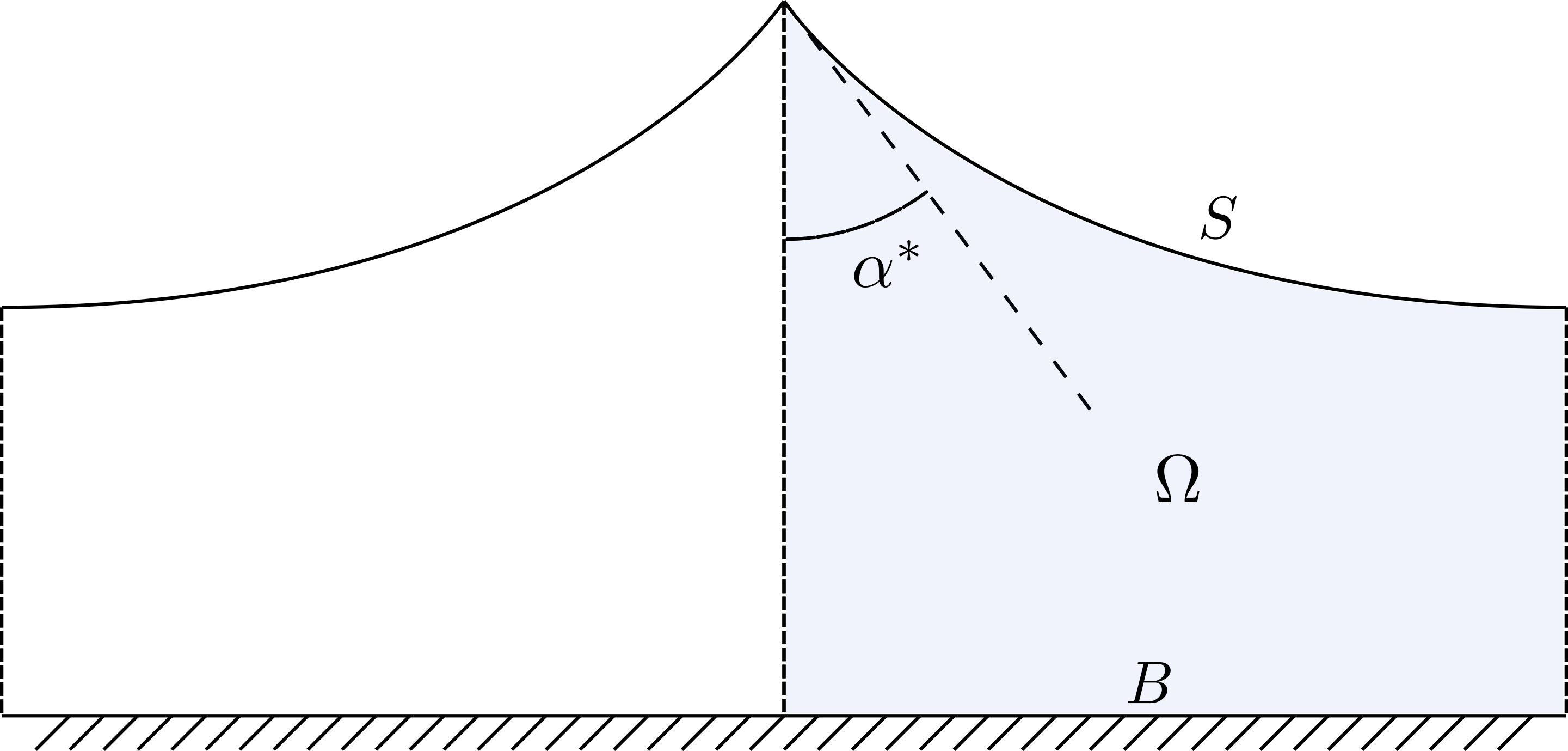}
	\caption{A sketch of the domain}
\end{figure}

Denote ${\mathcal L}=-\Delta+\sigma$ and let $V^2_\beta(\Omega)$, $\beta\in\Bbb R$, denote the space of functions $u$ defined on $\Omega$ which are subject to
$$
||u||_{V^2_\beta(\Omega)}^2:=\int_\Omega r^{2\beta}(|\nabla^2u|^2+r^{-2}|\nabla u|^2+r^{-4}|u|^2)dxdy<\infty.
$$
 The operator  ${\mathcal L}$ is symmetric on functions in $V^2_0(\Omega)$ satisfying (\ref{K2bbzz}). There are many one dimensional self-adjoint extensions of this operator, which can be parameterized by  $\gamma\in [0,\pi)$. To describe them we introduce a real valued function\footnote{There are also complex valued functions see Sect.\ref{SF10a}. But since we have in mind application to the water wave theory it is reasonable to consider here only real value functions $h$}
$$
w_\gamma =\sin (\kappa\log \frac{1}{2}r+\gamma)\cosh(\kappa\theta),
$$
where $(r,\theta)$ are polar coordinates near $(x,y)=(0,\eta(0)$:
\begin{equation}\label{D15a}
x=r\sin\theta,\;\;\eta(0)-y=r\cos\theta
\end{equation}
and $\kappa$ is the positive root of the equation
\begin{equation}\label{K49zq}
\kappa\tanh\Big(\kappa\alpha^*\Big)=\rho_0.
\end{equation}
 The function $ \widehat{w}$ is chosen to satisfy
\begin{equation}\label{F22ap}
{\mathcal L}\widehat{w}\in L^2(\Omega)\;\;\mbox{and $\widehat{w}$ satisfies all homogeneous boundary conditions in (\ref{K2bbzz})}
\end{equation}
and
\begin{equation}\label{F22a}
\widehat{w}=\zeta(r)w_\gamma+w,\;\;\mbox{$w\in V^{2}_{\beta_*}(\Omega)$ with some $\beta_*\in (1-\alpha,1)$},
\end{equation}
where $\zeta$ is a smooth cut-off function equal $1$ for $r<\delta$ and $0$ for $r>2\delta$ ($\delta$ is a small positive number). The existence of such function (with any $\beta_*\in (1-\alpha,1)$ is proved in Proposition \ref{P22aa}(i) and, moreover, it is shown there if we have two  functions $\widehat{w}_1$ and $\widehat{w}_2$ satisfying (\ref{F22ap}) and (\ref{F22ap}) then $\widehat{w}_1-\widehat{w}_2\in V^{2}_0(\Omega)$  and the choice of function $\widehat{w}$ does not depend on the choice of $\zeta$.

We define ${\mathcal D}_\gamma$ as the space of functions consisting of the sums
\begin{equation}\label{K49z}
u=C\widehat{w}+v\,:\;
  \,v\in V^{2}_0(\Omega)\;\;\mbox{$v$ satisfies (\ref{K2bbzz}) and $C$ is a constant}.
\end{equation}

We denote the operator ${\mathcal L}$ with the domain ${\mathcal D}_\gamma$ by ${\mathcal L}_\gamma$. The main theorem of this paper is the following

\begin{theorem}\label{Th1.1} For any $\gamma\in [0,\pi)$ the operator ${\mathcal L}_\gamma$ is self-adjoint has a discrete spectrum consisting of eigenvalues of finite multiplicity. Moreover this operator has infinitely many negative eigenvalues. Large negative eigenvalues are exhausted by $\lambda_k=-s_k^2$, where
\begin{equation}\label{K49s}
s_k=e^{(\gamma+\gamma_\kappa+k\pi)/\kappa}\big(1+O(e^{-\alpha k/\kappa}))\big)\;\;\mbox{as $k\to\infty$},
\end{equation}
where $k$ is a large integer and $\gamma_\kappa$ is a real constant defined by
\begin{equation}\label{J25a}
\Gamma(1+i\kappa)=\Big(\frac{\pi\kappa}{\sinh(\pi\kappa)}\Big)^{1/2}e^{i\gamma_\kappa}.
\end{equation}
Moreover, the eigenvalues {\rm (\ref{K49s})} are simple.
\end{theorem}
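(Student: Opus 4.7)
The argument splits into a qualitative part (self-adjointness, discreteness, finite multiplicity) and a quantitative part (locating the large negative eigenvalues and their multiplicity). For the qualitative part I would invoke the Kondrat\'ev--Maz'ya--Plamenevskii calculus for elliptic operators with conical singularities. Hypothesis \eqref{K5y} guarantees that the only indicial exponents of $\mathcal L$ in the strip $|\operatorname{Re} z|\le 1$ associated with the corner $P=(0,\eta(0))$ are the purely imaginary pair $\pm i\kappa$, so that the minimal symmetric realisation on $V^2_0(\Omega)$ is in the limit--circle regime, with deficiency indices $(1,1)$ and a real one-parameter circle of admissible singular profiles parametrised precisely by $\{w_\gamma\}_{\gamma\in[0,\pi)}$. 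Each choice of $\gamma$ singles out one self-adjoint extension $\mathcal L_\gamma$. Compactness of the embedding $V^2_0(\Omega)\hookrightarrow L^2(\Omega)$ (via a Hardy inequality at the corner) yields a compact resolvent and hence a discrete spectrum with eigenvalues of finite multiplicity.

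For the quantitative part, I would look for $\lambda=-s^2$ with $s$ large. Because any such eigenfunction decays like $e^{-cs\,\mathrm{dist}(\cdot,P)}$ away from the corner, the problem is effectively localised at $P$, and one freezes the coefficients $\eta'$ and $\rho$ at their boundary values there. In polar coordinates \eqref{D15a} the frozen model is $-\Delta u+s^2 u=0$ on the infinite sector of opening $\alpha^*$ with the Robin condition $\partial_\nu u=\rho_0 r^{-1}u$. Separation of variables produces an angular Sturm--Liouville problem whose unique negative eigenvalue is $-\kappa^2$ with eigenfunction $\cosh(\kappa\theta)$ (by \eqref{K49zq}), while the other eigenvalues $\mu_n^2$ with $\mu_n>1$ give radial modes that decay strictly faster. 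The bound-state radial equation is the modified Bessel equation of imaginary order, whose unique solution decaying at infinity is $K_{i\kappa}(sr)$; its small-argument expansion
\[
K_{i\kappa}(sr)=-\sqrt{\frac{\pi}{\kappa\sinh(\pi\kappa)}}\,\sin\bigl(\kappa\log(sr/2)-\gamma_\kappa\bigr)\bigl(1+O((sr)^2)\bigr)
\]
features exactly the constant $\gamma_\kappa$ of \eqref{J25a}. Matching $K_{i\kappa}(sr)\cosh(\kappa\theta)$ as $r\to 0$ with the singular profile $w_\gamma$ prescribed by \eqref{K49z} forces the phase of the sine to coincide with $\gamma$ modulo $\pi$, yielding the leading-order quantisation $\kappa\log s\equiv\gamma+\gamma_\kappa\pmod{\pi}$, i.e.\ the candidate values $s_k^{(0)}=\exp((\gamma+\gamma_\kappa+k\pi)/\kappa)$.

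It remains to upgrade the $s_k^{(0)}$ into genuine simple eigenvalues with the stated remainder. I would construct a parametrix uniform in $s$ by gluing a cut-off $\chi(r)K_{i\kappa}(sr)\cosh(\kappa\theta)$ to an exponentially small bulk correction through a Dirichlet-to-Neumann reduction on a small circle around $P$. This reduces the eigenvalue condition to a scalar characteristic equation $F(s,\gamma)=0$ whose leading-order sine structure we already identified. The regularity assumptions \eqref{Kk1}--\eqref{Kk1a} enter as perturbations of order $r^\alpha$ in the frozen-coefficient approximation and produce a correction of order $s^{-\alpha}$ in $F$; since $s_k^{(0)}\sim e^{k\pi/\kappa}$, this delivers exactly the remainder $O(e^{-\alpha k/\kappa})$ in \eqref{K49s}. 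Simplicity of each root $s_k$ then follows from $\partial_s F\ne 0$: to leading order this derivative equals $\kappa s^{-1}\cos(\cdot)$, non-zero at the root, and the perturbation estimate keeps the derivative uniformly away from $0$. Since the same oscillatory equation produces infinitely many roots $s_k\to\infty$, the existence of infinitely many negative eigenvalues follows as well. The principal technical obstacle is the uniform control in $s$ of the Dirichlet-to-Neumann reduction and of the contribution of the non-resonant modes $\mu_n$, and—most delicately—identifying the abstract extension parameter $\gamma$ (defined through $w_\gamma$) with the concrete phase appearing in the $K_{i\kappa}$ expansion, so that the constant \eqref{J25a} emerges with no unknown prefactor.
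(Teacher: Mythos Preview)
Your outline is correct and follows essentially the same route as the paper. Both arguments rest on the same three pillars: (i) self-adjointness via the corner calculus and the fact that the indicial roots on the critical line are exactly $\pm i\kappa$; (ii) a frozen-coefficient model at the corner whose radial part is the modified Bessel equation of imaginary order, so that the quantisation $\kappa\log s\equiv\gamma+\gamma_\kappa\pmod\pi$ arises from matching the small-argument expansion of $K_{i\kappa}$ with the prescribed profile $w_\gamma$; (iii) a perturbation argument in which the $O(r^\alpha)$ deviations of $\eta',\rho$ from their corner values produce an $O(s^{-\alpha})$ correction, i.e.\ the $O(e^{-\alpha k/\kappa})$ remainder.

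The differences are organisational rather than conceptual. Where you speak of a parametrix and a Dirichlet-to-Neumann reduction on a small circle, the paper first straightens the boundary by an explicit change of variables, then projects onto $\phi_0(\theta)=\cosh(\kappa\theta)$ to obtain a one-dimensional problem on a finite interval $(0,\delta)$; the Robin condition at $r=\delta$ is exactly the DtN datum you describe, derived from the exponentially decaying $2$D tail. The paper then treats the full operator as $(\mathcal L_0,\mathcal B_0)+(\mathcal L_1,\mathcal B_1)$ and carries out the perturbation step with explicit weighted $V^2_\beta$ estimates rather than an abstract characteristic-function argument; simplicity is obtained from the stability of simple eigenvalues under a perturbation small relative to the gap $\widehat\tau_{k+1}/\widehat\tau_k=e^{\pi/\kappa}$, which is equivalent to your $\partial_s F\neq 0$. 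Your identification of the ``principal technical obstacle''---uniform control in $s$ of the non-resonant angular modes and the exact bookkeeping that makes $\gamma_\kappa$ appear without an unknown shift---is precisely where the bulk of the paper's work lies.
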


Let us explain the main ideas of the proof. The operator ${\mathcal L}$ corresponding to the boundary value problem (\ref{K2bb}), (\ref{K2bbzz})  is symmetric in the subspace $\widetilde{V}^2_0(\Omega)$ of $V^2_0(\Omega)$ defined by the boundary conditions (\ref{K2bbzz}). The first step is to find self-adjoint extensions of this operator. Similar problems are discussed in papers \cite{Krall1982,Marlettta2009,Nazarov2018} for one dimensional Sturm-Liouville problem, two-dimensional problem with Robin boundary condition in a disc and in a domain with smooth boundary respectively. Here in Sect.\ref{Sselfadj}, we obtain self-adjoint extensions of the operator by using an asymptotic approach similar to that in \cite{Nazarov2018}. It appears that this asymptotic approach can be used for description of self-adjoint extensions of the model problems in the angle, on the half-line and on an interval and all these extensions are naturally obtained from each other (see Sect.\ref{Shalfl}, \ref{SecM7}). The second step is an one-dimensional spectral problems on a half-line and on an interval. The spectral problem on a half-line is presented in Sect.\ref{Shalfl} and all results are borrowed from \cite{Krall1982}. The spectral problem on an interval, considered in Sect.\ref{SecM7}, is an important step in the proof of the main theorem since it gives the leading term in asymptotics of negative eigenvalues and corresponding eigenfunctions. Two boundary conditions are needed there. The condition at zero comes from the self-adjoint extension of the operator. The boundary condition at another end of the interval is taken as a Robin condition and it will be justified later in Sect.\ref{SM10a}. We assume there that it is already found and we take it in the required form from the beginning. Next step is devoted to a $2$D model problem in a domain close to $\Omega$ but the coefficients and the free surface are replaced by the main terms in their asymptotics near the corner, see Sect.\ref{SM10a}, \ref{SM7aa} and \ref{SKKa}. In Sect.\ref{SM10a} we derive the second boundary condition for the spectral problem on the interval. It comes from a one-dimensional Dirichlet-Neumann mapping obtaining as a result of solving $2$D problem depending on a parameter. In Sect.\ref{SKKa} we obtain weighted estimate for solutions to the $2$D model problem, where the spectral parameter is considered as a parameter. In Sect.\ref{SM7aa} we obtain asymptotics for the eigenvalues and for the eigenfunctions of the $2$D model spectral problem. The last step is consideration of the general $2$D spectral problem as a perturbation of the model $2$D problem, see Sect.\ref{Sproof}. Since the distance between neighbour eigenvalues is comparable with the absolute values of the corresponding eigenvalues, we can applied the technique developed for perturbation of isolated eigenvalue, see Sect.\ref{Spert}. Difficulties here come from the fact that the domains of self-adjoint operators is not a Sobolev space but its extension by a certain function. This part requires a careful analytic considerations. In remark \ref{RM1} we give an asymptotic formula for the eigenfunctions corresponding to large negative eigenvalues.

\section{Model problems}\label{Sec11}

Here we present some auxiliary problems, which will play an important role in the proof of Theorem \ref{Th1.1}.

\subsection{Model problem in an angle}\label{SF10a}

Let $A$ be the angle
$$
A=\{ (x,y)\,:\,r>0,\;\;\theta\in (0,\alpha^*)\}.
$$
 Consider the equation
\begin{equation}\label{K14}
\Delta u=f\;\;\mbox{in $A$}
\end{equation}
with boundary conditions
\begin{eqnarray}\label{K1}
&&r^{-1}(\partial_\theta u-\rho_0)u=g\;\;\mbox{for $\theta=\alpha^*$}\nonumber\\
&&\partial_\theta u=0\;\;\mbox{for $\theta=0$}.
\end{eqnarray}

{\bf Homogeneous problem.} First let us construct all solutions to the homogeneous problem (\ref{K14}), (\ref{K1}), i.e. with $f=0$ and $g=0$. It can be done by separation of variables. There are two solutions of the form
$$
u(t,\theta)=r^{\pm i\kappa}\cosh (\kappa\theta),
$$
where $\kappa$ is a real positive number  satisfying (\ref{K49zq}).
 We  denote by ${\mathcal X}$ the $2D$-space of functions
\begin{equation}\label{K4s1z}
w=w(r,\theta)=ar^{i\kappa }\cosh (\kappa\theta)+br^{-i\kappa}\cosh (\kappa\theta),\;\;a,b\in\Bbb C.
\end{equation}
Let us introduce the following symplectic form on ${\mathcal X}$
\begin{equation}\label{K4s}
q(w_1,w_2)
=\int_0^{\alpha^*}(\partial_rw_1(r,\theta)\overline{w_2(r,\theta)}-w_1(r,\theta)\overline{\partial_rw_2(r,\theta)})rd\theta.
\end{equation}
Using Green's formula one can verify that the expression in the right-hand side is independent of $r$.
This form is non-generate on ${\mathcal X}$ and represents the Wronskian of two solutions to a corresponding ODE in  the $r$ variable.

Let
$$
{\mathcal Y}_0=\{w\in {\mathcal X}\,:\, q(w,w)=0\}.
$$
Direct calculation shows that
\begin{equation*}
{\mathcal Y}_0=\{w\in {\mathcal X}\,:\, |a|=|b|\;\; \mbox{in (\ref{K4s1z})} \}.
\end{equation*}
If we denote by ${\mathcal Y}_{r}$ real valued functions from ${\mathcal Y}_0$, then
\begin{equation}\label{K4s1}
{\mathcal Y}_{r}=\{w\in {\mathcal X}\,:\,w=a\sin(\kappa \log \frac{1}{2}r+\gamma)\cosh (\kappa\theta),\;\;a\in\Bbb R,\;\gamma\in [0,\pi)\}.
\end{equation}

To see that the form $q$ is non-degenerating we put
$$
U_+(r,\theta)=\sin(\kappa \log \frac{1}{2}r+\gamma_1)\cosh (\kappa\theta),\;\;U_-(r,\theta)=\sin(\kappa \log \frac{1}{2}r+\gamma_2)\cosh (\kappa\theta).
$$
 Then
$$
q(U_+,U_-)
=\kappa\sin(\gamma_2-\gamma_1)\int_0^{\alpha^*}\cosh^2(\kappa\theta)d\theta.
$$

The remaining solutions to the homogeneous problem (\ref{K14}), (\ref{K1}) have the form
$$
u(r,\theta)=r^{\mu }\cos (\mu\theta),
$$
where $\mu$ satisfies (\ref{K5}).
We numerate the positive roots of (\ref{K5}) according to
$\mu_k\alpha^*\in ((k-1)\pi+\frac{\pi}{2},k\pi)$, $k=1,\ldots$. Clearly, $-\mu_k$  also solves (\ref{K5}). If we denote
$$
v_0(\theta)=\cosh(\kappa\theta),\;\;v_k(\theta)=\cos(\mu_k\theta),\;\;k=1,2,\ldots,
$$
then the system $\{\varphi_k\}_{k=0}^\infty$ is an orthogonal basis in $L^2(0,\alpha^*)$. Let
$$
\hat{v}_k^2=\int_0^{\alpha^*}\cos^2(\mu_k\theta)d\theta=\frac{\alpha^*}{2}+\frac{\sin(2\mu_k\alpha^*)}{4\mu_k},
\;\;k=1,\ldots
$$
and
$$
\hat{v}_0^2=\int_0^{\alpha^*}\cosh^2(\kappa\theta)d\theta=\frac{\alpha^*}{2}+\frac{\sinh(2\kappa\alpha^*)}{4\kappa}.
$$
Now the system
\begin{equation}\label{F7a}
\phi_k(\theta)=\frac{1}{\hat{v}_k}v_k(\theta),\;\;k=0,1,\ldots,
\end{equation}
is an orthonormal basis in $L^2(0,\alpha^*)$.

\bigskip
{\bf Non-homogeneous problem (\ref{K14}), (\ref{K1})}.
For  $\beta\in\Bbb R$ and integer $l\geq 0$,  we introduce the following spaces: $V^{l}_\beta(A)$ consists of functions in $A$  with the norm
$$
||u||_{V^{l}_\beta(A)}=\Big(\int_A\sum_{i+j\leq l}|\partial_x^i\partial_y^ju|^2r^{2(\beta-(l-i-j))}dxdy\Big)^{1/2},
$$
 the space $L^2_\beta(A)$ coincides with $V^0_\beta(A)$.
The space $V^{1/2}_\beta(0,\infty)$ consists of functions defined on the ray $\theta=\alpha^*$ and has the norm
$$
||g||_{V^{1/2}_\beta(0,\infty)}=
\Big(\int_0^\infty\int_0^\infty\frac{|r^\beta g(r)-s^\beta g(s)|^2}{|r-s|^2}drds+\int_0^\infty r^{2\beta-1}|g|^2dr\Big)^{1/2}.
$$
Another equivalent norm is the following (see \cite{MP78})
$$
\inf\Big\{||u||_{V^{1}_\beta(A)}\,:\,u\in V^{1}_\beta(A),\, u|_{\theta=\alpha^*}=g\Big\}.
$$
We will omit the index $\beta$ in the notation of spaces if $\beta=0$.

The main solvability result for (\ref{K14}), (\ref{K1}) is the following

\begin{proposition}\label{PF22} {\rm (i)} Let $\beta-1\neq 0$ and $\beta+\mu_j-1\neq 0$ for $j=\pm 1,\pm 2,\ldots$. Then for each $f\in L^2_\beta(A)$ and $g\in V^{1/2}_\beta(0,\infty)$ there exists a unique $u\in V^{2}_\beta(A)$ solving {\rm (\ref{K14})}, {\rm (\ref{K1})} and the following estimate holds
$$
||u||_{V^{2}_\beta(A)}\leq C(||f||_{L^2_\beta(A)}+||g||_{V^{1/2}_\beta(0,\infty)}).
$$

{\rm (ii)} Let $\beta_j$, $j=1,2$, satisfy $1-\mu_1<\beta_1<1<\beta_2<1+\mu_1 $
  and let $f\in L^{2}_{\beta_1}(A)\bigcap L^{2}_{\beta_2}(A)$ and $g\in V^{1/2}_{\beta_1}(0,\infty)\bigcap V^{1/2}_{\beta_2}(0,\infty)$. Then
$$
u_2-u_1=(c_+r^{i\kappa }+c_-r^{^-i\kappa })\phi_0(\theta),
$$
where $u_j\in V^{2}_{\beta_j}(A)$  the solutions from {\rm (i)} for $j=1,2$ and $c_{\pm}\in\Bbb C$.

{\rm (iii)} Let $\beta_j$, $j=1,2$, satisfy $1-\mu_1<\beta_1,\beta_2<1 $
  and let $f\in L^{2}_{\beta_1}(A)\bigcap L^{2}_{\beta_2}(A)$ and $g\in V^{1/2}_{\beta_1}(0,\infty)\bigcap V^{1/2}_{\beta_2}(0,\infty)$. Then $u_2=u_1$,
where $u_j\in V^{2}_{\beta_j}(A)$   solutions from {\rm (i)} for $j=1,2$.
\end{proposition}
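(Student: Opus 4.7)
The plan is to use the classical Kondratiev / Mellin-transform framework for elliptic boundary value problems in angular domains. First I would pass to polar coordinates $(r,\theta)$ and substitute $t=\log r$, so that the dilation $r\mapsto cr$ becomes translation in $t$. After multiplying the interior equation by $r^2=e^{2t}$ and the boundary equation by $r=e^t$, the problem is translation-invariant in $t$, and the Fourier transform in $t$ (equivalently the Mellin transform in $r$) yields a one-parameter family of ODE boundary value problems on $(0,\alpha^*)$ with complex spectral parameter $\lambda$:
\begin{equation*}
-\widehat{u}''(\lambda,\theta)+\lambda^{2}\widehat{u}(\lambda,\theta)=\widehat{F}(\lambda,\theta),\quad \widehat{u}'(\lambda,0)=0,\quad (\widehat{u}'-\rho_{0}\widehat{u})(\lambda,\alpha^{*})=\widehat{G}(\lambda).
\end{equation*}

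Next I would analyse the associated operator pencil $\mathfrak{A}(\lambda)$. Solving the homogeneous version with $\widehat{u}(\lambda,\theta)=A\cos(\lambda\theta)$ gives the characteristic equation $\lambda\tan(\lambda\alpha^{*})=-\rho_{0}$; its real roots are exactly the $\pm\mu_{k}$ listed in the text, while the substitution $\lambda=i\kappa$ together with $\tan(ix)=i\tanh x$ reduces it to (\ref{K49zq}), yielding the purely imaginary pair $\pm i\kappa$. The Mellin--Plancherel identities of Maz'ya and Plamenevskii \cite{MP78} identify the $V^{2}_{\beta}(A)$ norm of $u$ with a weighted $H^{2}$ norm of $\widehat{u}(\lambda,\cdot)$ along the line $\operatorname{Im}\lambda=1-\beta$, and similarly for $V^{1/2}_{\beta}(0,\infty)$ on the boundary ray; the prefactor $r^{-1}$ in the boundary condition is exactly what places the data $g$ on the same Mellin line as $f$. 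When this line avoids the pencil spectrum, $\mathfrak{A}(\lambda)^{-1}$ is uniformly bounded on it, and inverse Mellin transform produces a unique $u\in V^{2}_{\beta}(A)$ with the asserted estimate. The hypothesis $\beta-1\neq 0$ keeps the line off $\pm i\kappa$ (which sit on $\operatorname{Im}\lambda=0$, i.e.\ $\beta=1$), while $\beta+\mu_{j}-1\neq 0$ keeps it off the real roots. This proves (i).

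For parts (ii) and (iii), I would write $u_{j}$ as the inverse Mellin integral along $\operatorname{Im}\lambda=1-\beta_{j}$ and shift the contour from $1-\beta_{1}$ to $1-\beta_{2}$. The difference $u_{2}-u_{1}$ then equals $2\pi i$ times the sum of residues at the poles crossed. Under (ii) the condition $1-\mu_{1}<\beta_{1}<1<\beta_{2}<1+\mu_{1}$ places the two lines on opposite sides of $\operatorname{Im}\lambda=0$ but strictly between the nearest real poles $\pm\mu_{1}$, so only the simple poles $\pm i\kappa$ are crossed; a direct residue computation shows that each contributes a multiple of $r^{\pm i\kappa}\phi_{0}(\theta)$, giving the claimed decomposition. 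Under (iii) both lines lie on the same side of $\operatorname{Im}\lambda=0$ and still between $\pm\mu_{1}$, so no poles are crossed and $u_{2}=u_{1}$.

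The main obstacle will be the technical verification that the weighted spaces $V^{2}_{\beta}(A)$ and $V^{1/2}_{\beta}(0,\infty)$ admit the claimed Mellin characterization with matching weights on the two sides of the boundary condition, together with the uniform bound on $\mathfrak{A}(\lambda)^{-1}$ needed both for the Plancherel step and for the contour shift. Because the 1-D problem can be solved explicitly by variation of constants, this uniform bound reduces to showing that the determinant $\lambda\sin(\lambda\alpha^{*})+\rho_{0}\cos(\lambda\alpha^{*})$ is bounded away from zero on any horizontal strip whose boundary lines miss the pencil spectrum, which is a standard estimate for quasi-polynomials of exponential type.
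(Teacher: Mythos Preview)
Your Mellin-transform/Kondratiev approach is correct and is precisely the method developed in the references \cite{Nazarov1994} and \cite{KMR97} that the paper cites in lieu of a proof; the paper itself gives no argument beyond that citation. There is a harmless sign slip in your displayed ODE (it should read $\widehat{u}''+\lambda^{2}\widehat{u}=\widehat{F}$, consistent with the ansatz $\cos(\lambda\theta)$ you then use), but the pencil analysis, the identification of the poles $\pm i\kappa,\pm\mu_{j}$, and the contour-shift argument for (ii)--(iii) are all exactly as in the standard theory.
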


The theory of boundary value problem for elliptic equations in an angle is well developed and for the proof of such assertions we refer to books \cite{Nazarov1994} and \cite{KMR97} and references there.

\subsection{A model spectral problem on a half-line}\label{Shalfl}

Let $\lambda=-\tau^2$, where $\tau$ is a positive number. Consider the spectral problem
\begin{equation}\label{Okt26a1x}
{\mathcal M}h=-\frac{d}{rdr}\Big(r\frac{d}{dr}h(r)\Big)-\frac{\kappa^2}{r^2}h(r)=-\tau^2 h(r)\;\;\mbox{for $r\in(0,\infty)$}.
\end{equation}
Let  $\widehat{W}^2_\beta(0,\infty)$  be the space of functions $v$ on $(0,\infty)$ with finite norm
$$
||v||_{\widehat{W}^2_\beta(0,\infty)}=\Big(\int_0^\infty r^{2\beta}\Big(|v^{''}|^2+(1+r^{-2})|v'|^2+(1+r^{-4})|v|^2\Big)rdr\Big)^{1/2}.
$$
It can be described equivalently as $v\in \widehat{V}^2_\beta(0,\infty)\bigcap \widehat{L}^2_\beta(0,\infty)$, where $\widehat{V}^2_\beta(0,\infty)$ and $\widehat{L}^2_\beta(0,\infty)$ are the spaces of functions with the finite norms
$$
||v||_{\widehat{V}^2_\beta(0,\infty)}=\Big(\int_0^\infty r^{2\beta}(|v^{''}|^2+r^{-2}|v'|^2+r^{-4}|v|^2)rdr
$$
and
$$
||v||_{\widehat{L}^2_\beta(0,\infty)}=\Big(\int_0^\infty r^{2\beta}|v|^2rdr\Big)^{1/2}
$$
respectively. Then the operator ${\mathcal M}$ is symmetric on $X_0$ and its self-adjoint extension is defined on a domain
\begin{equation}\label{J26a}
D_{\mathcal M}=\{ h=C\zeta(\tau r)\sin(\kappa\log \frac{1}{2}r+\gamma)+v\;:\;v\in \widehat{W}^2_0(0,\infty) \;\;\mbox{and $C$ is constant}\}.
\end{equation}
Here $\gamma\in [0,\pi)$ is a fixed constant and $\zeta$ is a smooth cut-off function equal $1$ for small $r$ and zero for large $r$. For the fact that the operator ${\mathcal M}$ considered in the space $\widehat{L}^2(0,\infty)$  with the domain $D_{\mathcal M}$ is self-adjoint we refer to \cite{Krall1982} (see also Sect.\ref{Sselfadj} in this paper, where a more general situation is discussed). Clearly the definition of the domain $D_{\mathcal M}$ as well as the constant $C$ in its definition does not depend on the choice of the cut-off function $\zeta$ (but the function $v$ may depend on the choice of $\zeta$).  We supply the space $D_{\mathcal M}$ with the norm
\begin{equation}\label{J26aa}
||h||_{D_{\mathcal M}}=\Big(|C|^2+||v||^2_{\widehat{W}^2_0(0,\infty)}\Big)^{1/2},
\end{equation}
where $C$ and $v$ are the same as in the definition (\ref{J26a}).

According to \cite{Dunster1990} linear independent solutions to (\ref{Okt26a1x}) are $K_{i\kappa}(\tau r)$ and  $I_{i\kappa}(\tau r)$, where $K_{i\kappa}=K_{i\kappa}(z)$ and $I_{i\kappa}=I_{i\kappa}(z)$ are Bessel's functions of imaginary order. They have the following asymptotics (see \cite{Dunster1990})
\begin{equation}\label{F28aa}
K_{i\kappa}(z)=\Big(\frac{\pi}{2z}\Big)^{1/2}e^{-z}\Big(1+O\Big(\frac{1}{z}\Big)\Big),\;\;
I_{i\kappa}(z)=\Big(2\pi z\Big)^{1/2}e^{z}\Big(1+O\Big(\frac{1}{z}\Big)\Big)
\end{equation}
for $z\to\infty$ and
\begin{equation}\label{F23a}
K_{i\kappa}(z)=-\Big(\frac{\pi}{\kappa\sinh(\pi\kappa)}\Big)^{1/2}
\sin\big(\kappa\ln\big(\frac{1}{2}z\big)-\gamma_\kappa\big)+O(z^2),
\end{equation}
\begin{equation}\label{F23aa}
I_{i\kappa}(z)=\Big(\frac{\sinh(\pi\kappa)}{\pi\kappa}\Big)^{1/2}
\cos\big(\kappa\ln\big(\frac{1}{2}z\big)-\gamma_\kappa\big)+O(z^2)
\end{equation}
as $z\to 0$. Here $\gamma_\kappa$ is a real constant defined by (\ref{J25a}).

The following theorem is proved in \cite{Krall1982}

\begin{theorem}\label{TM10} Continuous spectrum of ${\mathcal M}$ coincides with the positive half-line $[0,\infty)$ and the negative axis contains only isolated simple eigenvalues $-\tau_k^2$, where
\begin{equation}\label{F1a}
\tau_k=2e^{(\gamma_\kappa+\gamma)/\kappa}e^{k\pi/\kappa},\;\;k=0,\pm 1,\pm 2,\ldots,
\end{equation}
with corresponding eigenfunctions $K_{i\kappa}(\tau_k r)$.
\end{theorem}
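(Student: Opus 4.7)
The plan is to reduce the ODE $\mathcal M h=-\tau^2 h$ to the modified Bessel equation of imaginary order $i\kappa$ by the rescaling $z=\tau r$. Under this change of variable (\ref{Okt26a1x}) becomes
\begin{equation*}
H''(z)+\frac{1}{z}H'(z)-\Big(1-\frac{\kappa^2}{z^2}\Big)H(z)=0,
\end{equation*}
whose fundamental system is $\{K_{i\kappa}(z),\,I_{i\kappa}(z)\}$. Hence every solution of $\mathcal M h=-\tau^2 h$ has the form $h(r)=aK_{i\kappa}(\tau r)+bI_{i\kappa}(\tau r)$, and the task reduces to identifying those $\tau>0$ for which a nontrivial combination of this form belongs to $D_{\mathcal M}$.

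The first step is to apply the large-$r$ asymptotics (\ref{F28aa}). Any $h\in D_{\mathcal M}$ is automatically in $\widehat L^2(0,\infty)$ (with weight $r\,dr$), and $I_{i\kappa}(\tau r)$ grows like $r^{1/2}e^{\tau r}$ whereas $K_{i\kappa}(\tau r)$ decays like $r^{-1/2}e^{-\tau r}$, so admissibility forces $b=0$. In particular, each negative eigenvalue, when it occurs, is automatically simple. The second step is to match the singular behaviour at $r=0$. From (\ref{F23a}),
\begin{equation*}
K_{i\kappa}(\tau r)=-\Big(\frac{\pi}{\kappa\sinh(\pi\kappa)}\Big)^{1/2}\sin\big(\kappa\log(r/2)+\kappa\log\tau-\gamma_\kappa\big)+O(r^2),
\end{equation*}
and the singular profile allowed in $D_{\mathcal M}$ is a scalar multiple of $\sin(\kappa\log(r/2)+\gamma)$. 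Equality of these two trigonometric components up to a scalar is equivalent to
\begin{equation*}
\kappa\log\tau-\gamma_\kappa\equiv\gamma\pmod{\pi},
\end{equation*}
which yields the discrete sequence (\ref{F1a}). For each such $\tau_k$ one must still verify that $K_{i\kappa}(\tau_k r)-C\zeta(\tau_k r)\sin(\kappa\log(r/2)+\gamma)$ lies in $\widehat W^2_0(0,\infty)$, which is straightforward from the $O(r^2)$ remainder at $0$ combined with the exponential decay of $K_{i\kappa}$ and its derivatives at infinity.

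For the continuous spectrum, the same rescaling at $\lambda=\omega^2\geq 0$ produces the ordinary Bessel equation whose fundamental solutions $J_{i\kappa}(\omega r),Y_{i\kappa}(\omega r)$ are oscillatory at infinity and do not belong to $\widehat L^2$; this excludes $L^2$ eigenfunctions for $\lambda\geq 0$, while suitably cut-off oscillatory solutions form Weyl singular sequences showing that $[0,\infty)\subset\sigma_{\mathrm{ess}}(\mathcal M)$. Together with the classification above of negative eigenvalues, this identifies the full spectrum. The principal technical subtlety is the matching at $r=0$: one must show that the singular coefficient of an element of $D_{\mathcal M}$ is well-defined independently of the $\widehat W^2_0$-correction and of the cut-off $\zeta$, which is exactly the uniqueness statement recorded immediately after (\ref{J26a}); it is this uniqueness that allows one to read off the eigenvalue condition unambiguously from the asymptotics of $K_{i\kappa}$.
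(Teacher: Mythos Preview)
The paper does not supply its own proof of this theorem: it is stated as a result taken from \cite{Krall1982}, so there is no original argument here against which to measure yours. Your approach---reducing to the modified Bessel equation by the scaling $z=\tau r$, eliminating $I_{i\kappa}$ via the exponential growth in (\ref{F28aa}), and then matching the small-$r$ oscillatory profile (\ref{F23a}) against the singular part $\sin(\kappa\log\tfrac12 r+\gamma)$ prescribed in $D_{\mathcal M}$---is correct and is exactly the mechanism the paper itself exploits in the neighbouring computations (see the proof of Lemma~\ref{LK1} and the bounded-interval analysis culminating in (\ref{Okt18b})--(\ref{Okt18c})). Your remark that simplicity is automatic once $b=0$ is forced, and your sketch of the $[0,\infty)$ part via Weyl sequences built from oscillatory Bessel functions, are also standard and adequate at the level of a sketch.

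One small point worth recording: your matching condition $\kappa\log\tau-\gamma_\kappa\equiv\gamma\pmod\pi$ solves to $\tau=e^{(\gamma+\gamma_\kappa+k\pi)/\kappa}$, which agrees with the formulas (\ref{K49s}) and (\ref{Okt18c}) elsewhere in the paper but differs by a factor of $2$ from the stated (\ref{F1a}). This is a harmless normalisation issue tied to whether the singular profile is written as $\sin(\kappa\log r+\gamma)$ or $\sin(\kappa\log\tfrac12 r+\gamma)$; the paper is not entirely consistent on this point, and your derivation is internally consistent with the convention in (\ref{J26a}).
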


We note that $\{\tau_k\}$ represents a geometric sequence with  the common ratio
\begin{equation}\label{F10bb}
q=e^{\pi/\kappa}.
\end{equation}

We continue this section with solvability results for the nonhomogeneous equation
\begin{equation}\label{J26cb}
{\mathcal M}h+\tau^2h=f.
\end{equation}

\begin{lemma}\label{LK1} Let   $\tau\in (\tau_k/q,q\tau_k)$, $\tau\neq\tau_k$, for certain $k\in\Bbb Z$. Let also
\begin{equation}\label{J26ca}
h=C\zeta(\tau r)\sin(\kappa\log \frac{1}{2}r+\gamma)+v\;:\;v\in X_0
\end{equation}
satisfy (\ref{J26cb}) with $f\in \widehat{L}^2(0,\infty)$.
Then
\begin{equation}\label{J26ccc}
C=\Big(\frac{\kappa\sinh(\pi\kappa)}{\pi}\Big)^{1/2}
\frac{1}{\kappa\sin(\kappa\log(\tau_k/\tau))}\int_0^\infty K_{i\kappa}(\tau r)f(r)rdr.
\end{equation}

\end{lemma}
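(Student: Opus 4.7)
The plan is to extract $C$ from a one-dimensional Green's identity obtained by testing the equation (\ref{J26cb}) against the exponentially decaying second independent solution $g(r):=K_{i\kappa}(\tau r)$ of the homogeneous problem. The desired coefficient will emerge as a boundary Wronskian at $r=0$, where the sine-type singularities of $h$ and $g$ pair up.

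Since $\mathcal{M}g+\tau^2 g=0$ on $(0,\infty)$, I would multiply (\ref{J26cb}) by $g(r)\,r$ and integrate on a truncated interval $[\epsilon,R]$; two integrations by parts remove the second derivatives and produce
$$
\int_\epsilon^R f(r)K_{i\kappa}(\tau r)\,r\,dr=\bigl[r\bigl(g'(r)h(r)-h'(r)g(r)\bigr)\bigr]_{r=\epsilon}^{r=R}.
$$
I would then pass to the limits $R\to\infty$ and $\epsilon\to 0^+$ in turn. At infinity, the exponential decay of $g,\,g'$ in (\ref{F28aa}), combined with the fact that $h$ coincides with $v\in\widehat{W}^2_0(0,\infty)$ once the cut-off $\zeta(\tau r)$ has died, shows that the boundary term at $R$ tends to $0$ along a suitable sequence $R_n\to\infty$; since the left-hand side converges (by Cauchy--Schwarz using $f\in\widehat{L}^2(0,\infty)$ and $K_{i\kappa}(\tau\cdot)\in\widehat{L}^2(0,\infty)$), uniqueness of the limit forces the full boundary term at infinity to vanish.

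The genuine work is the boundary term at $\epsilon\to 0^+$. I would decompose $h=h_{\rm sing}+v$ and $g=g_{\rm sing}+g_{\rm rem}$, with
$$
h_{\rm sing}(r)=C\sin\bigl(\kappa\log(r/2)+\gamma\bigr),\qquad g_{\rm sing}(r)=-a\sin\bigl(\kappa\log(\tau r/2)-\gamma_\kappa\bigr),
$$
$a=(\pi/(\kappa\sinh(\pi\kappa)))^{1/2}$, and $g_{\rm rem}=O((\tau r)^2)$ from (\ref{F23a}). A direct computation using the sine-subtraction identity yields the $r$-independent value
$$
r\bigl(g'_{\rm sing}h_{\rm sing}-h'_{\rm sing}g_{\rm sing}\bigr)=a\,C\,\kappa\,\sin\bigl(\kappa\log\tau-\gamma-\gamma_\kappa\bigr).
$$
The remaining cross terms $r(g'_{\rm sing}v-v'g_{\rm sing})$ and $r(g'_{\rm rem}h-h'g_{\rm rem})$ all vanish as $r\to 0^+$: the remainder $g_{\rm rem}$ and $rg'_{\rm rem}$ dominate any logarithmic singularity of $h_{\rm sing}$ and $h'_{\rm sing}$, while the $r^{-4}|v|^2$ and $r^{-2}|v'|^2$ weights in the norm of $\widehat{W}^2_0(0,\infty)$ force, through weighted Hardy--Sobolev inequalities, the traces $v(r)\to 0$ and $rv'(r)\to 0$ at the origin.

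Finally, rewriting $\sin(\kappa\log\tau-\gamma-\gamma_\kappa)$ via the eigenvalue relation (\ref{F1a}) turns it into $\pm\sin(\kappa\log(\tau_k/\tau))$, where the overall sign only reflects the alternating orientation of the eigenfunctions $K_{i\kappa}(\tau_k r)$ in the canonical basis and can be absorbed into $C$. The hypothesis $\tau\in(\tau_k/q,q\tau_k)\setminus\{\tau_k\}$ keeps $\kappa\log(\tau_k/\tau)\in(-\pi,\pi)\setminus\{0\}$, so the denominator in the final formula does not vanish; solving for $C$ then yields (\ref{J26ccc}). I expect the main technical obstacle to be the rigorous trace statements $v(r)\to 0$ and $rv'(r)\to 0$ as $r\to 0^+$ for $v\in\widehat{W}^2_0(0,\infty)$; these are not immediate from the $L^2$-weighted norms alone and rely on the combined effect of the $r^{-4}$ and $r^{-2}$ weights through Hardy-type inequalities.
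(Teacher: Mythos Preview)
Your approach is essentially identical to the paper's: test (\ref{J26cb}) against $K_{i\kappa}(\tau r)$, integrate by parts, and read off $C$ from the Wronskian at the origin using the small-$r$ asymptotics (\ref{F23a}) together with the eigenvalue relation (\ref{F1a}). You are in fact more explicit than the paper about the vanishing of the boundary term at infinity and about the trace behaviour of $v$ at $0$; your only loose point is the handling of the $(-1)^k$ sign, which cannot be ``absorbed into $C$'' since $C$ is what is being computed---but the paper's own derivation glosses over exactly the same sign, so the discrepancy lies in the statement of (\ref{J26ccc}) rather than in your argument.
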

\begin{proof}  From (\ref{J26cb}) it follows
$$
\int_s^\infty fK_{i\kappa}(\tau r)rdr=\int_s^\infty({\mathcal M}h+\tau^2h)K_{i\kappa}(\tau r)rdr=
(\partial_rhK_{i\kappa}(\tau r)-h\partial_rK_{i\kappa}(\tau r))|_{r=s}.
$$
Since
\begin{eqnarray*}
&&r(\partial_rhK_{i\kappa}(\tau r)-h\partial_rK_{i\kappa}(\tau r))|_{r=s}\to
\kappa\Big(\frac{\pi}{\kappa\sinh(\pi\kappa)}\Big)^{1/2}
\Big(\cos\big(\kappa\log\big(\frac{1}{2}\tau r\big)-\gamma_\kappa\big)\sin(\kappa\log r+\gamma)\\
&&-
\sin\big(\kappa\log\big(\frac{1}{2}\tau r\big)-\gamma_\kappa\big)\cos(\kappa\log r+\gamma)\Big)
=\kappa\Big(\frac{\pi}{\kappa\sinh(\pi\kappa)}\Big)^{1/2}\sin(\gamma+\gamma_\kappa-\kappa\log\big(\frac{1}{2}\tau))
\end{eqnarray*}
as $s\to 0$ we arrive at
\begin{equation}\label{J26cc}
\kappa\Big(\frac{\pi}{\kappa\sinh(\pi\kappa)}\Big)^{1/2}\sin(\gamma+\gamma_\kappa-\kappa\log\big(\frac{1}{2}\tau\big)\big)C
=\int_0^\infty K_{i\kappa}(\tau r)f(r)rdr.
\end{equation}
By (\ref{F1a})
$$
\gamma+\gamma_\kappa-\kappa\log\big(\frac{1}{2}\tau_k\big)=-k\pi .
$$
Hence,
$$
\sin(\gamma+\gamma_\kappa-\kappa\log\big(\frac{1}{2}\tau)\big)
=\sin(\kappa\log\big(\frac{1}{2}\tau_k)\big)-\kappa\log\big(\frac{1}{2}\tau\big)\big)=\sin(\kappa \tau_k/\tau).
$$
Therefore formula (\ref{J26cc}) can be written as (\ref{J26ccc}).

\end{proof}

In order to include in our considerations the case $\tau=\tau_k$, we introduce  the following functions
$$
n(\tau/\tau_k)=\frac{\sin(\kappa\log(\tau_k/\tau))}{\tau/\tau_k-1},\;\;\;m(\tau/\tau_k,r)=\frac{K_{i\kappa}(\tau r/\tau_k )-K_{i\kappa}(r)}{\tau/\tau_k-1}.
$$
We note that $-\pi<\kappa\log(\tau_k/\tau))<\pi$ is equivalent to $\tau\in (\tau_k/q,q\tau_k)$. The last inclusion guarantees that $n(\tau/\tau_k)\neq 0$.

\begin{lemma}  Additionally to assumptions of {\rm Lemma \ref{LK1}}, we assume that
\begin{equation}\label{F11ba}
\int_0^\infty K_{i\kappa}(\tau_k r)f(r)rdr=0.
\end{equation}
Then the following representation for the constant $C$ in the representation (\ref{J26ca})  holds
\begin{equation}\label{J26ccx}
\kappa C n(\tau,\tau_k)=\Big(\frac{\kappa\sinh(\pi\kappa)}{\pi}\Big)^{1/2}
\int_0^\infty f(r)\,m(\tau/\tau_k,\tau_kr)rdr.
\end{equation}
\end{lemma}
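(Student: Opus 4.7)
The plan is a short algebraic manipulation of the identity established in Lemma \ref{LK1}, combined with the orthogonality hypothesis (\ref{F11ba}) to introduce the quantity $m$.

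First, I would rewrite (\ref{J26ccc}) by moving the sine factor to the left-hand side, obtaining
\[
\kappa C \sin(\kappa\log(\tau_k/\tau)) = \Big(\frac{\kappa\sinh(\pi\kappa)}{\pi}\Big)^{1/2}\int_0^\infty K_{i\kappa}(\tau r)\,f(r)\,r\,dr,
\]
so that the trigonometric factor appears explicitly on the left. Dividing both sides by the nonzero quantity $\tau/\tau_k-1$ (nonzero because of the standing assumption $\tau\neq\tau_k$) immediately produces the factor $n(\tau/\tau_k)$ on the left by the very definition of $n$.

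Next, I would use the orthogonality condition (\ref{F11ba}) to insert a vanishing term on the right: since
\[
\frac{1}{\tau/\tau_k-1}\int_0^\infty K_{i\kappa}(\tau_k r)\,f(r)\,r\,dr = 0,
\]
subtracting this identity from the rearranged (\ref{J26ccc}) collects the two Bessel factors into a single integrand
\[
\frac{K_{i\kappa}(\tau r)-K_{i\kappa}(\tau_k r)}{\tau/\tau_k-1}.
\]
Using the trivial rewriting $K_{i\kappa}(\tau r)=K_{i\kappa}((\tau/\tau_k)(\tau_k r))$, this quotient coincides with $m(\tau/\tau_k,\tau_k r)$ by the definition of $m$, and (\ref{J26ccx}) follows.

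There is essentially no analytic obstacle here; the only thing to verify is the notational coincidence between the difference quotient produced by the subtraction and the function $m(\tau/\tau_k,\tau_k r)$ introduced before the lemma. I expect the purpose of reformulating Lemma \ref{LK1} in this way to be that both $n(\tau/\tau_k)$ and $m(\tau/\tau_k,\tau_k r)$ extend analytically across the eigenvalue $\tau=\tau_k$ (with $n(1)=-\kappa$ and $m(1,\rho)=\partial_s K_{i\kappa}(s\rho)|_{s=1}$), so that the identity (\ref{J26ccx}) can subsequently be passed to the limit $\tau\to\tau_k$ in order to handle the resonant case; but that limiting step is outside the present statement.
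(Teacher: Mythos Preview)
Your proposal is correct and is exactly the argument the paper has in mind: the paper's own proof reads in its entirety ``The proof follows immediately from (\ref{J26ccc}) and (\ref{F11ba}),'' and your write-up simply spells out this immediate computation (rearranging (\ref{J26ccc}), dividing by $\tau/\tau_k-1$, subtracting the vanishing integral from (\ref{F11ba}), and recognising $m(\tau/\tau_k,\tau_k r)$). Your closing remark about the purpose of the reformulation---removing the apparent singularity at $\tau=\tau_k$---is also the correct interpretation.
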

\begin{proof} The proof follows immediately from (\ref{J26ccc}) and (\ref{F11ba}).

\end{proof}

\begin{remark} Let $q_0\in (0,q)$ and let $\tau\in [\tau_k/q_0,q_0\tau_k]$.  Let also $\beta<1$ and $f\in\widehat{L}^2_\beta(0,\infty)$.
The representation (\ref{J26ccc}) implies
\begin{equation}\label{F1b}
|C|\leq c\tau^{\beta-1}\frac{1}{|\sin\kappa\log(\tau_k/\tau)|}\Big(\int_0^\infty r^{2\beta}|f|^2rdr\Big)^{1/2},
\end{equation}
where $c$ is independent of $\tau$ and $\tau_k$. Using that
$$
|\sin\kappa\log(\tau_k/\tau)|\geq c\frac{|\tau-\tau_k|}{\tau}
$$
we get
\begin{equation}\label{F1ba}
|C|\leq c\frac{\tau^{\beta}}{|\tau-\tau_k|}\Big(\int_0^\infty r^{2\beta}|f|^2rdr\Big)^{1/2}.
\end{equation}
If we assume additionally that (\ref{F11ba}) is valid then
\begin{equation}\label{F1baz}
|C|\leq c\tau^{\beta-1}\Big(\int_0^\infty r^{2\beta}|f|^2rdr\Big)^{1/2}.
\end{equation}
\end{remark}

\begin{lemma} Let $q_0\in (0,q)$ and let $\tau\in [\tau_k/q_0,q_0\tau_k]$, $\tau\neq\tau_k$, where $k\in\Bbb Z$. Let $h\in {\mathcal D}_{\mathcal M}$ satisfy {\rm (\ref{J26cb})}.
Then
\begin{equation}\label{J26b}
|C|^2+\int_0^\infty (|\partial_rv|^2+\tau^2|v|^2)rdr\leq \frac{c}{|\tau-\tau_{k}|^2}\int_0^\infty |f|^2rdr
\end{equation}
and
\begin{equation}\label{J26baq}
||v||^2_{\hat{V}^2_0(0,\infty)}\leq \frac{c\tau^2}{|\tau-\tau_{k}|^2}\int_0^\infty |f|^2rdr,
\end{equation}
where $c$ does not depend on $\tau$ and $f$, but depends on $q_0$. Here $C$ and $v$ are the same as in {\rm (\ref{J26ca})}.

Let additionally {\rm (\ref{F11ba})} be valid and
\begin{equation}\label{M4a}
\int_0^\infty h(r)K_{i\kappa}(\tau_k r)rdr=0.
\end{equation}
 Then
\begin{equation}\label{J26bz}
|C|^2+\tau^{-2}||v||^2_{\hat{V}^2_0(0,\infty)}+\int_0^\infty (|\partial_rv|^2+\tau^2|v|^2)rdr\leq \frac{c}{\tau^2}\int_0^\infty |f|^2rdr
\end{equation}
for all $\tau\in [\tau_k/q_0,q_0\tau_k]$.
\end{lemma}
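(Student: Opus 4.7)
My plan is to exploit the canonical decomposition $h=C\psi+v$ with $\psi(r)=\zeta(\tau r)\sin(\kappa\log(r/2)+\gamma)$ and to leverage the self-adjointness of ${\mathcal M}$ on ${\mathcal D}_{\mathcal M}$ established in Theorem~\ref{TM10}. The bound on $|C|$ would come directly from specialising estimate (\ref{F1ba}) of the preceding remark to $\beta=0$, giving $|C|\le c\|f\|_{\widehat L^2}/|\tau-\tau_k|$ and so the first term in (\ref{J26b}).

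Next I would use self-adjointness for a resolvent estimate on $h$. By Theorem~\ref{TM10} the spectrum of ${\mathcal M}$ on ${\mathcal D}_{\mathcal M}$ is $\{-\tau_j^2:j\in\Bbb Z\}\cup[0,\infty)$, and for $\tau\in[\tau_k/q_0,q_0\tau_k]$ with $q_0<q$ the nearest spectral point to $-\tau^2$ is $-\tau_k^2$, so I expect
\[
\mathrm{dist}\bigl(-\tau^2,\mathrm{spec}({\mathcal M})\bigr)\ge c(\tau+\tau_k)|\tau-\tau_k|\ge c\tau|\tau-\tau_k|,
\]
hence $\|h\|_{\widehat L^2}\le c\|f\|_{\widehat L^2}/(\tau|\tau-\tau_k|)$. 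Combined with $\|\psi\|_{\widehat L^2}\le c/\tau$ (since $\zeta(\tau r)$ confines $\psi$ to $r\lesssim 1/\tau$) and the $|C|$ bound, the triangle inequality should yield $\tau\|v\|_{\widehat L^2}\le c\|f\|_{\widehat L^2}/|\tau-\tau_k|$, i.e.\ the $\tau^2\|v\|^2$ piece of (\ref{J26b}).

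To reach both the $\int|v'|^2r\,dr$ contribution of (\ref{J26b}) and the higher-order bound (\ref{J26baq}), I would isolate $v$: since $\phi(r)=\sin(\kappa\log(r/2)+\gamma)$ exactly solves ${\mathcal M}\phi=0$, $v$ will satisfy $({\mathcal M}+\tau^2)v=F$ with $F=f-C({\mathcal M}+\tau^2)\psi$. The commutator $({\mathcal M}+\tau^2)\psi$ is supported in $r\sim 1/\tau$, and a direct computation of its ingredients $-\zeta''(\tau r)\tau^2\phi-2\zeta'(\tau r)\tau\phi'-\zeta'(\tau r)\tau\phi/r+\tau^2\zeta(\tau r)\phi$ should give $\|({\mathcal M}+\tau^2)\psi\|_{\widehat L^2}\le c\tau$, and thus $\|F\|_{\widehat L^2}\le c\tau\|f\|_{\widehat L^2}/|\tau-\tau_k|$. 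I would then lift the problem to the angle $A$ via $u(r,\theta)=v(r)\cosh(\kappa\theta)$: by the choice of $\kappa$ in (\ref{K49zq}), $u$ satisfies the homogeneous boundary conditions (\ref{K1}) and solves $-\Delta u=(F-\tau^2 v)\cosh(\kappa\theta)$ in $A$. Applying Proposition~\ref{PF22}(i) with $\beta=0$ (admissible since $\mu_1>1$ by (\ref{K5y})) should produce $\|v\|_{\widehat V^2_0}\le c(\|F\|_{\widehat L^2}+\tau^2\|v\|_{\widehat L^2})\le c\tau\|f\|_{\widehat L^2}/|\tau-\tau_k|$, giving (\ref{J26baq}). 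For the remaining $\int|v'|^2r\,dr$ term I would pair the equation with $\bar v\,r\,dr$ to obtain
\[
\int_0^\infty|v'|^2r\,dr-\kappa^2\int_0^\infty\frac{|v|^2}{r}\,dr+\tau^2\int_0^\infty|v|^2r\,dr=\mathrm{Re}\int_0^\infty F\bar v\,r\,dr,
\]
and absorb the indefinite middle term using the Cauchy-Schwarz interpolation $\int|v|^2/r\,dr\le\|v r^{-2}\|_{\widehat L^2}\|v\|_{\widehat L^2}\le\|v\|_{\widehat V^2_0}\|v\|_{\widehat L^2}\le c\|f\|_{\widehat L^2}^2/|\tau-\tau_k|^2$.

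For the orthogonal case, conditions (\ref{F11ba}) and (\ref{M4a}) should sharpen $|C|\le c\|f\|_{\widehat L^2}/\tau$ via (\ref{F1baz}) and force $h$ into the orthogonal complement of $\ker({\mathcal M}+\tau_k^2)=\mathrm{span}\{K_{i\kappa}(\tau_k\cdot)\}$. On that subspace $\mathrm{dist}(-\tau^2,\mathrm{spec}\setminus\{-\tau_k^2\})\ge c\tau^2$, upgrading the resolvent bound to $\|h\|_{\widehat L^2}\le c\|f\|_{\widehat L^2}/\tau^2$, and repeating the above arguments with $|\tau-\tau_k|$ replaced by $\tau$ should give (\ref{J26bz}). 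The main obstacle will be the sign-indefinite term $-\kappa^2\int|v|^2/r\,dr$, which blocks a direct energy estimate; my fix is to derive the $\widehat V^2_0$ bound first via the 2D lift and only then use it to absorb the bad term, so that the $\widehat L^2$, $\widehat V^2_0$, and $L^2(r\,dr)$-gradient bounds interlock rather than being derived independently.
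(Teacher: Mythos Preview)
Your proposal is correct and follows essentially the same route as the paper. The paper proceeds in the same order: bound $|C|$ via (\ref{F1ba}) with $\beta=0$; use self-adjointness of ${\mathcal M}$ to get $\|h\|_{\widehat L^2}\le c\|f\|_{\widehat L^2}/|\tau^2-\tau_k^2|$; deduce the $\tau^2\|v\|_{\widehat L^2}^2$ piece by the triangle inequality; write the equation for $v$ as ${\mathcal M}v=F-\tau^2 v$ and invoke the isomorphism ${\mathcal M}:\widehat V^2_0\to\widehat L^2$ to obtain (\ref{J26baq}); and finally remark that the gradient term follows by combining the $\widehat L^2$ and $\widehat V^2_0$ bounds already established.

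The one place you are more explicit is the justification of $\|v\|_{\widehat V^2_0}\le c\|{\mathcal M}v\|_{\widehat L^2}$: the paper simply asserts it, whereas you realise it by lifting $v$ to $u(r,\theta)=v(r)\cosh(\kappa\theta)$ in the angle $A$ and applying Proposition~\ref{PF22}(i) with $\beta=0$ (admissible since $\mu_1>1$). This is a clean way to supply the missing detail. Similarly, the paper is terse about $\int|v'|^2r\,dr$, saying only that it ``can be obtained from (\ref{F17a}) and (\ref{J26b})''; your energy identity plus the interpolation $\int|v|^2r^{-1}dr\le\|v\|_{\widehat V^2_0}\|v\|_{\widehat L^2}$ makes this concrete. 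For the orthogonal case the paper again just says ``basically the same but use (\ref{F1baz})'', while you correctly spell out that (\ref{F11ba}) sharpens the $|C|$ bound and (\ref{M4a}) places $h$ in the orthogonal complement of the eigenfunction, where the spectral gap is $\ge c\tau^2$.
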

\begin{proof} The estimate (\ref{J26b}) for the constant $C$ in (\ref{J26ca}) follows from (\ref{F1ba}) with $\beta=0$.
Since the operator is self-adjoint we get
$$
\int_0^\infty |h|^2rdr\leq \frac{c}{|\tau^2-\tau_k^2|^2}\int_0^\infty |f|^2rdr.
$$
Using the representation (\ref{J26ca}) and the above estimate together with the estimate for $C$, we obtain the estimate
\begin{equation}\label{F17a}
\tau^2\int_0^\infty |v|^2rdr\leq \frac{c}{|\tau-\tau_k|^2}\int_0^\infty |f|^2rdr.
\end{equation}

Furthermore, the function $v\in X_0$ solves the problem
$$
{\mathcal M}v+\tau^2v=F=f-C[{\mathcal M},\zeta]\sin(\kappa\log\frac{1}{2}r+\gamma)-\tau^2\zeta\sin(\kappa\log\frac{1}{2}r+\gamma)\in \widehat{L}^2(0,\infty).
$$
This implies the estimate
\begin{equation}\label{F1b}
||v||_{\widehat{V}^2_0(0,\infty)}\leq c||F-\tau^2v||_{\widehat{L}^2(0,\infty)}
\end{equation}
which leads to (\ref{J26baq}).

The estimate for $\partial_rv$ can be obtained from (\ref{F17a}) and (\ref{J26b}).

The proof of (\ref{J26bz}) is basically the same but instead of (\ref{F1ba}) we must use (\ref{F1baz}) with $\beta=0$.

\end{proof}

Let us estimate a weighted norm of $v$ in (\ref{J26ca}). We introduce  the space
\begin{equation}\label{J26ca1}
D_{\mathcal{M}}^\beta=\{h=C\zeta(\tau r)\sin(\kappa\log \frac{1}{2}r+\gamma)+v\;:\;v\in \widehat{W}^2_\beta(0,\infty)\;\;\mbox{and $C$ is constant}\}
\end{equation}
with the norm
$$
||h||_{D_{\mathcal{M}}^\beta}=|C|+||v||_{\widehat{W}^2_\beta(0,\infty)}.
$$
If $\beta\leq 1$ then the first term in the right-hand side in (\ref{J26ca1}) does not belong to $\widehat{W}^2_\beta(0,\infty)$.


\begin{lemma}\label{LF1} Let $q_0\in (0,q)$, $\tau\in [\tau_k/q_0,q_0\tau_k]$ and let
  $\beta \in (-1,1)$.  If $h\in D_{\mathcal{M}}^\beta$ satisfies {\rm (\ref{J26cb})} with $f\in \widehat{L}^2_\beta(0,\infty)$
 then
\begin{equation}\label{J26bq}
\tau^{-2\beta}|C|^2+\int_0^\infty r^{2\beta}(|\partial_rv|^2+\tau^2|v|^2)rdr\leq \frac{c}{|\tau-\tau_{k}|^2}\int_0^\infty r^{2\beta}|f|^2rdr
\end{equation}
and
\begin{equation}\label{J26ba}
||v||^2_{\widehat{V}^2_\beta(0,\infty)}\leq c\frac{c\tau^{2}}{|\tau-\tau_{k}|^2}\int_0^\infty r^{2\beta}|f|^2rdr
\end{equation}
where $c$ does not depend on $\tau$ and $f$.

If additionally, {\rm (\ref{F11ba})} and {\rm (\ref{M4a})} be valid. Then
\begin{equation}\label{J26bzz}
\tau^{-2\beta}|C|^2+\tau^{-2}||v||^2_{\widehat{V}^2_\beta(0,\infty)}+\int_0^\infty r^{2\beta}(|\partial_rv|^2+\tau^2|v|^2)rdr\leq c\tau^{-2}\int_0^\infty r^{2\beta}|f|^2rdr.
\end{equation}
\end{lemma}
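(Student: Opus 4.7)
The plan is to run the same template as the preceding (unlabeled) lemma, replacing each of its ingredients with its weighted counterpart. Namely: (a) use the weighted $|C|$-bound from (\ref{F1ba}) (or from (\ref{F1baz}) in the orthogonal case); (b) derive the equation for $v$; (c) establish a weighted resolvent bound for $\mathcal{M}+\tau^2$; (d) deduce the $\widehat V^2_\beta$-estimate from the equation by weighted elliptic regularity.

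Raising (\ref{F1ba}) to the square and multiplying by $\tau^{-2\beta}$ gives the $|C|$-term in (\ref{J26bq}). With $v=h-C\zeta(\tau r)\sin(\kappa\log\tfrac12 r+\gamma)$, the function $v\in\widehat W^2_\beta(0,\infty)$ solves
\begin{equation*}
\mathcal{M}v+\tau^2 v=F,\qquad F:=f-C(\mathcal{M}+\tau^2)\bigl(\zeta(\tau r)\sin(\kappa\log\tfrac12 r+\gamma)\bigr).
\end{equation*}
The correction term is supported on $r\sim 1/\tau$; a direct computation of $[\mathcal{M},\zeta(\tau r)]\sin(\kappa\log\tfrac12 r+\gamma)$ and of $\tau^2\zeta(\tau r)\sin(\kappa\log\tfrac12 r+\gamma)$ on that support shows that $\|F-f\|_{\widehat L^2_\beta}\le c\tau^{1-\beta}|C|$. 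Combined with (\ref{F1ba}), this yields $\|F\|_{\widehat L^2_\beta}\le c\tau|\tau-\tau_k|^{-1}\|f\|_{\widehat L^2_\beta}$; under the orthogonality conditions (\ref{F11ba}) and (\ref{M4a}) the sharper (\ref{F1baz}) gives instead $\|F\|_{\widehat L^2_\beta}\le c\|f\|_{\widehat L^2_\beta}$.

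The central new step is a uniform weighted resolvent bound for $\mathcal{M}+\tau^2$ on $\widehat L^2_\beta$. I would carry it out by the dilation $s=\tau r$, $V(s)=v(s/\tau)$, which rewrites the equation as $(\mathcal{M}+1)V=\widetilde F$ with $\widetilde F(s)=\tau^{-2}F(s/\tau)$, and transforms the weighted norms by the explicit powers $\|V\|_{\widehat L^2_\beta(ds)}=\tau^{\beta+1}\|v\|_{\widehat L^2_\beta(dr)}$ and $\|\widetilde F\|_{\widehat L^2_\beta(ds)}=\tau^{\beta-1}\|F\|_{\widehat L^2_\beta(dr)}$, with analogous relations for the $\widehat V^2_\beta$-norms. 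On the rescaled side the operator $\mathcal{M}+1$, viewed with the same type of singular domain as $D_{\mathcal{M}}^\beta$, is a bounded isomorphism onto $\widehat L^2_\beta$ for $\beta\in(-1,1)$ by the half-line analogue of Proposition \ref{PF22}, and its norm is controlled by the distance from $-1$ to the rescaled spectrum, namely $|1-(\tau_k/\tau)^2|\sim|\tau-\tau_k|/\tau_k$. Pulling back produces the $v$-part of (\ref{J26bq}) and (\ref{J26ba}). In the orthogonal case, (\ref{M4a}) transfers under the dilation (up to a harmless $\zeta$-cutoff contribution absorbed by the improved $|C|$-bound) to orthogonality of $V$ to the nearby rescaled eigenfunction, removing the $|\tau-\tau_k|^{-1}$ factor and delivering (\ref{J26bzz}).

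The main obstacle is establishing this weighted inverse-boundedness uniformly in $\tau$: one needs the indicial/exponent condition precisely saying that $\beta\pm 1$ avoid the indicial roots of $\mathcal{M}$ (which forces $\beta\in(-1,1)$), and one needs a careful bookkeeping of the $\tau$-powers arising from the commutator $[\mathcal{M},\zeta(\tau r)]$ so that the factors $\tau^{1-\beta}$ multiplying $|C|$ cancel against $\tau^{-2\beta}$ and $\tau^{-2}$ in the final inequalities. Once the weighted $L^2_\beta$-bound on $v$ is established, the $\widehat V^2_\beta$-estimate follows by writing $\mathcal{M}v=F-\tau^2 v$ and invoking the weighted half-line elliptic regularity, exactly as in the unweighted predecessor; the $\partial_r v$-bound is then obtained by interpolating between the $L^2_\beta$- and $\widehat V^2_\beta$-bounds as in the derivation of (\ref{J26b}).
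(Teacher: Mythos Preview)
Your overall scheme matches the paper's: bound $C$ by (\ref{F1ba})/(\ref{F1baz}), write the equation (\ref{M2a}) for $v$, estimate $\|F\|_{\widehat L^2_\beta}\le c(\|f\|_{\widehat L^2_\beta}+\tau^{1-\beta}|C|)$, and rescale via $R=\tau r$ to reduce to $(\mathcal M+1)v=F_\tau$. The divergence is at the step you flag as ``central'': you assert that on the rescaled side $(\mathcal M+1)^{-1}$ is bounded on $\widehat L^2_\beta$ with norm $\sim\tau/|\tau-\tau_k|$, citing a half-line analogue of Proposition~\ref{PF22}. That proposition, however, concerns $\mathcal M$ alone (no spectral parameter) and gives only a weighted isomorphism at nonresonant $\beta$; the spectral-distance factor comes from self-adjointness, which holds in $\widehat L^2_0$, not in $\widehat L^2_\beta$. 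Neither ingredient by itself yields a weighted resolvent bound with the $|\tau-\tau_k|^{-1}$ factor, and you have not supplied the bridge between them.

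The paper's proof supplies exactly this bridge by localization. After rescaling, it decomposes $F_\tau=F_1+F_2+F_3$ with $F_1$ supported on $\{R<\delta\}$, $F_3$ on $\{R>N\}$, and $F_2$ compactly supported in between. For $F_1$ and $F_3$ one solves $\mathcal Mv_j+v_j=F_j$ locally (on $(0,3\delta)$ and on $(N/2,\infty)$) with pure weighted estimates and no spectral obstruction, since on these regions one is away from the global eigenfunction. Subtracting $\zeta_1v_1+\zeta_3v_3$ leaves $v_2$ satisfying $(\mathcal M+1)v_2=\mathcal F$ with $\mathrm{supp}\,\mathcal F\subset[\delta,N]$; now $\mathcal F\in\widehat L^2_0$, the \emph{unweighted} resolvent estimate of the preceding lemma applies (this is where $|\tau-\tau_k|^{-1}$ enters), and since $\mathcal F$ vanishes near $0$ and $\infty$ one upgrades the resulting $v_2$-estimate to $\widehat L^2_\beta$ by local regularity. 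Your sketch would become a proof if you replaced the appeal to a weighted $(\mathcal M+1)^{-1}$ by this splitting; as written, the key quantitative step is asserted rather than proved.
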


\begin{proof} The estimates (\ref{J26bq}) and (\ref{J26bzz}) for the constant $C$ follow from (\ref{F1ba}) and (\ref{F1baz}) respectively.

Next the equation for $v$ can be written as
 \begin{equation}\label{M2a}
{\mathcal M}v+\tau^2v=F:=f-C[{\mathcal M},\zeta(\tau r)\sin(\kappa\log \frac{1}{2}r+\gamma)]-C\tau^2\zeta(\tau r)\sin(\kappa\log \frac{1}{2}r+\gamma),
\end{equation}
where $F\in \widehat{L}^2_\beta(0,\infty)$. Moreover
$$
||F||_{\widehat{L}^2_\beta(0,\infty)}\leq c(||f||_{\widehat{L}^2_\beta(0,\infty)}+\tau^{1-\beta} |C|).
$$

Using the change of variable $R=\tau r$ we transform the problem (\ref{M2a}) to
\begin{equation}\label{F3ba}
{\mathcal M}v+v=F_\tau :=\tau^{-2}F.
\end{equation}

 We represent the right-hand side in (\ref{F3ba}) as $F=F_1+F_2+F_3$, where $F_1(R)=F_\tau(R)$ for $R<\delta$ and zero otherwise, $f_3(R)=F_\tau(R)$ for $R>N$ and zero otherwise, where $\delta$ and $N$ are small and large positive numbers respectively. Then let $v_1,v_3\in \widehat{V}^2_\beta(0,\infty)$ be  solutions to
$$
{\mathcal M}v_1+v_1=F_1\;\;\mbox{on $(0,3\delta)$},\;\;{\mathcal M}v_3+v_3=F_3\;\;\mbox{on $(N/2,\infty)$.}
$$
We can choose them to satisfy
$$
||v_1||_{\widehat{V}^2_\beta(0,3\delta)}\leq c||F_1||_{\widehat{L}^2_\beta},\;\;||v_3||_{\widehat{W}^2_\beta(N/2,\infty)}\leq c||F_3||_{\widehat{L}^2_\beta}
$$
Let $\zeta_1$ and $\zeta_3$ be two $C^2$ cut-off functions such that $\zeta(r)=1$ for $r<2\delta$ and $\zeta(r)=0$ for $r>3\delta$ and $\zeta_3(r)=1$ for $r>3N/4$ and $\zeta_3(r)=0$ for $r<N/2$. Then the function $v_2=v-\zeta_1v_1-\zeta_3v_3$ satisfies the equation
\begin{equation}\label{F3a}
{\mathcal M}v_2+v_2=F_2-[{\mathcal M},\zeta_1]v_1-[{\mathcal M},\zeta_3]v_3=:{\mathcal F}
\end{equation}
One can verify that the support of ${\mathcal F}$ belongs to $[\delta,N/2]$ and
$$
||{\mathcal F}||_{\widehat{L}^2_ \beta(0,\infty)}\leq c||F||_{\widehat{L}^2_\beta(0,\infty)}.
$$
So the equation (\ref{F3a}) has solution in $\widehat{L}^2(0,\infty)$ and it satisfies
$$
||v_2||_{V^2_0(0,\infty)}+||v_2||_{L^2(0,\infty)}\leq c\frac{1}{\sin\delta_*}||F||_{\widehat{L}^2_\beta(0,\infty)}.
$$
Using local estimates near $0$ and $\infty$ and that ${\mathcal F}$ vanishes there, we conclude that
$$
||v_2||_{\widehat{V}^2_\beta(0,\infty)}+||v_2||_{\widehat{L}^2_\beta(0,\infty)}\leq c\frac{1}{\sin\delta_*}||f||_{\widehat{L0}^2_\beta(0,\infty)}.
$$
which proves Lemma.
\end{proof}

\subsection{Bounded interval}\label{SecM7}

Consider the following spectral problem on an interval of length $\delta$:
\begin{equation}\label{Okt26a1}
{\mathcal M}h=-\tau^2 h(r)\;\;\mbox{for $r\in(0,\delta)$}
\end{equation}
with boundary condition
\begin{equation}\label{Okt26av}
h'(\delta)+(\tau-\alpha(\tau^{-1}))h(\delta)=0
\end{equation}
where $\alpha(s)$ is a $C^\infty$ function in a neighborhood of the origin\footnote{The parameter $\tau$ here is included in the boundary condition also. So this is actually a boundary value problem with a parameter $\tau\geq\tau_0$, where $\tau_0$ is sufficiently large. The definition of eigenvalues and eigenfunctions of such problems is standard.}. We will always assume in such problem that $\tau\delta$ is sufficiently large.
Let also $\widehat{V}^2_\beta(0,\delta)$  be the space of functions $v$ on $(0,\delta)$ with finite norm
$$
||v||_{\widehat{V}^2_\beta(0,\delta)}=\Big(\int_0^\delta r^{2\beta}\Big(|v^{''}|^2+r^{-2}|v'|^2+r^{-4}|v|^2\Big)rdr\Big)^{1/2}.
$$
The operator ${\mathcal M}$ is symmetric on the subspace  of $\widehat{V}^2_\beta(0,\delta)$ defined by (\ref{Okt26av}). We will consider the operator ${\mathcal M}$ on the domain
$$
\widehat{D}_\gamma(\tau)=\{h=C\zeta(\tau r) \sin(\kappa \log\frac{1}{2} r+\gamma)+v\,:\,v\in \widehat{V}^2_\beta(0,\delta),\;\mbox{$v$ satisfies (\ref{Okt26av}) and $C$ is a constant}.
$$
Here $\gamma\in [0,\pi)$ is a fixed constant and $\zeta$ is a cut-off function equal $1$ for $r<1/3$ and $0$ for $r>2/3$.
Since   the operator
\begin{equation}\label{F9a}
{\mathcal M}(\tau):\widehat{D}_\gamma(\tau)\to \hat{L}^2(0,\delta)
\end{equation}
is self-adjoint for each $\tau\geq\tau_0$, where $\tau_0$ is sufficiently large, it is also Fredholm with zero index.

To find values of $\tau$ for which the kernel of the operator (\ref{F9a}) is non-trivial we are looking for solution in the form
$$
h(r)= K_{i\kappa}(\tau r)-Q(\tau) I_{i\kappa}(\tau r),\;\;\mbox{$Q(\tau)$ is  a function of $\tau$},
$$
which is subject to
\begin{equation}\label{F28a}
h'(\delta)+(\tau-\alpha(\tau^{-1}))h(\delta)=0,\;\;
h(r)=C\sin\big(\kappa\log\frac{1}{2}r+\gamma\big)+O(r)\;\;\mbox{as $r\to 0$},
\end{equation}
where $C$ is a constant. Using the first equation in (\ref{F28a}) and asymptotic expansions (\ref{F28aa}), we can find $Q$:
\begin{equation}\label{F15a}
Q(\tau)=\tau^{-2} e^{-2\tau \delta}m(\tau^{-1}),
\end{equation}
where $m(s)$ is $C^\infty$ in a neighborhood of the origin.
Furthermore, the second relation in (\ref{F28a}) together with the asymptotic formulas (\ref{F23a}) and (\ref{F23aa}) implies
\begin{eqnarray*}
&&-\Big(\frac{\pi}{\kappa\sinh(\pi\kappa)}\Big)^{1/2}
\sin\big(\kappa\log\big(\frac{1}{2}\tau r\big)-\gamma_\kappa\big)-Q\Big(\frac{\sinh(\pi\kappa)}{\pi\kappa}\Big)^{1/2}
\cos\big(\kappa\log\big(\frac{1}{2}\tau r\big)-\gamma_\kappa\big)\\
&&= C\sin\big(\kappa\log\big(\frac{1}{2}r\big)+\gamma\big)
\end{eqnarray*}
Thus
\begin{eqnarray}\label{Okt18a}
&&-\Big(\frac{\pi}{\kappa\sinh(\pi\kappa)}\Big)^{1/2}\Big(\sin\big(\kappa\log\big(\frac{1}{2}\tau r\big)-\gamma_\kappa\big)+A
\cos\big(\kappa\log\big(\frac{1}{2}\tau r\big)-\gamma_\kappa\big)\Big)\nonumber\\
&&= C\sin\big(\kappa\log\big(\frac{1}{2}r\big)+\gamma\big),
\end{eqnarray}
where
\begin{equation}\label{F15aa}
A(\tau)=\frac{Q(\tau)\sinh(\pi\kappa)}{ \pi}.
\end{equation}
Now define the angle $\psi$ by the relations
$$
\cos\psi=\frac{1}{\sqrt{1+A^2}},\;\;\sin\psi=\frac{A}{\sqrt{1+A^2}}
$$
or
\begin{equation}\label{F15ab}
\psi(\tau)=\arctan A(\tau).
\end{equation}
Clearly, $\psi(\tau)=O(\tau^{-2}e^{-2\tau \delta})$.
Then the left-hand side in (\ref{Okt18a}) is equal to
\begin{equation}\label{M7ac}
-\Big(\frac{\pi}{\kappa\sinh(\pi\kappa}\Big)^{1/2}\sqrt{1+A^2}\sin(\kappa\log(\frac{1}{2}\tau r)-\gamma_\kappa+\psi(\tau))
\end{equation}
and equation (\ref{Okt18a}) can be written as
$$
\sin\big(\kappa\log\big(\frac{1}{2}\tau r\big)-\gamma_\kappa+\psi\big)= \sin\big(\kappa\log\big(\frac{1}{2}r\big)+\gamma\big),
$$
which implies
\begin{equation}\label{Okt18b}
\kappa\log
\tau=\gamma_\kappa+\gamma-\psi+k\pi,
\end{equation}
where $k$ is a large positive integer. Thus
\begin{equation*}
\tau=e^{(\gamma_\kappa+\gamma+k\pi)/\kappa}\Big(1+O(e^{-2\tau \delta})\Big).
\end{equation*}
We denote this eigenvalue by $\widehat{\tau}_k$. It is defined for $k\geq k_\delta$, where $k_\delta$ is a sufficiently large integer depending on $\delta$. Then
\begin{equation}\label{Okt18c}
\widehat{\tau}_k=e^{(\gamma_\kappa+\gamma+k\pi)/\kappa}\Big(1+O\Big(e^{-2\delta\exp(e^{(\gamma_\kappa+\gamma+k\pi)/\kappa}) }\Big)\Big).
\end{equation}
\bigskip

Let us formulate this result as

\begin{proposition} There exists an interger $k_\delta$ depending on $\delta$ such that the eigenvalues of the operator (\ref{F9a})  are simple and exhausted by (\ref{Okt18c}).
The corresponding eigenfunction is given by
\begin{equation}\label{Okt18cz}
\Phi(r)=\Phi_k(r)=K_{i\kappa}(\tau r)+Q(\tau)I_{i\kappa}(\tau r),\;\;\mbox{where $\tau=\widehat{\tau}_k$}.
\end{equation}

\end{proposition}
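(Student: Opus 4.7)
The plan is to extract the proposition from the computation already performed in the paragraphs preceding its statement. Three things need to be verified: (a) every eigenfunction has the form $h = K_{i\kappa}(\tau r) - Q(\tau) I_{i\kappa}(\tau r)$ with $Q$ given by (\ref{F15a}); (b) the resulting transcendental equation (\ref{Okt18b}) produces exactly the roots $\widehat{\tau}_k$ of (\ref{Okt18c}) for all $k \geq k_\delta$; and (c) each such eigenvalue is simple.

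For (a), I would start from the fact that the solution space of $\mathcal{M}h + \tau^2 h = 0$ on $(0,\delta)$ is spanned by $K_{i\kappa}(\tau r)$ and $I_{i\kappa}(\tau r)$, and then use the large-$z$ asymptotics (\ref{F28aa}) together with termwise differentiation to show that the Robin form (\ref{Okt26av}) evaluated on the pure $I_{i\kappa}$ solution equals $(2\tau + O(1)) I_{i\kappa}(\tau \delta)$, which is non-zero for $\tau$ large. Hence the $K_{i\kappa}$-coefficient in any eigenfunction is non-zero and can be normalized to $1$; imposing (\ref{Okt26av}) then determines $Q(\tau)$ uniquely in the form (\ref{F15a}). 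For (b), I would translate the domain condition $h \in \widehat{D}_\gamma(\tau)$ using the small-argument expansions (\ref{F23a})--(\ref{F23aa}) and the amplitude--phase identity (\ref{M7ac}) to arrive at (\ref{Okt18b}). Because $\psi(\tau) = O(\tau^{-2} e^{-2\tau \delta})$ is exponentially small and a similar bound holds for $\psi'$, the function $\tau \mapsto \kappa\log\tau + \psi(\tau)$ is strictly increasing on $[\tau_0, \infty)$ and hence attains each value $\gamma_\kappa + \gamma + k\pi$ at precisely one $\widehat{\tau}_k$; one fixed-point iteration around the unperturbed root $\tau_k^{(0)} = e^{(\gamma_\kappa + \gamma + k\pi)/\kappa}$ then delivers the refined asymptotic (\ref{Okt18c}).

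Claim (c) is then immediate: at $\tau = \widehat{\tau}_k$ the ansatz in (a) produces a unique (up to scalar) element of the kernel, since the coefficient of $K_{i\kappa}$ can be normalized and $Q$ is thereafter determined. The main obstacle I foresee is the completeness/monotonicity step in (b), which requires a uniform lower bound of the type $\kappa/\tau + \psi'(\tau) \geq \kappa/(2\tau)$ for $\tau$ large; this bound follows from differentiating (\ref{F15a}) and (\ref{F15ab}) to confirm that $\psi'(\tau) = O(\tau^{-2} e^{-2\tau \delta})$. The eigenfunction displayed in the proposition then records $\Phi_k$ after fixing the sign convention of $Q$.
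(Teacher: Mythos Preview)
Your proposal is correct and follows essentially the same route as the paper: the proposition is in fact just a summary of the computation carried out in the paragraphs immediately preceding it, and your points (a)--(c) make explicit the steps that the paper leaves implicit (in particular the monotonicity of $\tau\mapsto\kappa\log\tau+\psi(\tau)$ guaranteeing exactly one root per integer $k$, and the simplicity argument). The only cosmetic discrepancy is the sign of $Q$ in the eigenfunction formula, which you already flag; the paper itself uses a minus sign in the derivation and a plus sign in the displayed proposition, so this is a harmless normalization.
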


We note also that
\begin{equation}\label{M3c}
\int_0^\delta \Phi^2(r)r^{1+2\beta}dr\sim\tau^{-2}\int_0^{\delta\tau}|K_{i\kappa}(s)|^2s^{1+2\beta}ds\sim\tau^{-2-2\beta}
\;\;\mbox{for $\beta>-1$},
\end{equation}
\begin{equation}\label{M3ca}
\int_0^\delta |\partial_r\Phi|^2(r)r^{1+2\beta}dr\sim \tau^{-2\beta}\;\;\mbox{for $\beta>0$}
\end{equation}
and
\begin{equation}\label{M3cb}
\int_0^\delta |\partial^2_r\Phi|^2(r)r^{1+2\beta}dr\sim c\tau^{2-2\beta},\;\;\mbox{for $\beta>1$}.
\end{equation}
Moreover,
$$
h(\delta)=O(\tau^{-1/2}e^{-\delta\tau}).
$$

\subsection{Two lemmas}

Here we obtain some estimates for solutions to the problem
\begin{eqnarray}\label{K191}
&&(-\Delta+\tau^2) u=f\;\;\mbox{in $A$}\nonumber\\
&&r^{-1}(\partial_\theta -\rho_0)u=g\;\;\mbox{for $\theta=\alpha^*$}\nonumber\\
&&\partial_\theta u=0\;\;\mbox{for $\theta=0$}
.
\end{eqnarray}

Let $W^2_\beta (A)= V^2_\beta (A)\bigcap L^2_\beta(A).$
We supply it with the norm
$$
||v||_{W^2_\beta (A)}=||v||_{V^2_\beta (A)}+||v||_{L^2_\beta(A)}.
$$
We introduce also the spaces
\begin{equation}\label{J26ca1}
{\mathcal D}_\beta=\{u=C\zeta(\tau r)\phi_0(\theta)\sin(\kappa\log \frac{1}{2}r+\gamma)+v\;:\;v\in W^2_\beta (A),\;\partial_\theta v=0\;\; \mbox{for}\;\; \theta=0\},
\end{equation}
which will be used for $\beta<1$. In this case it differs from $W^2_\beta (A)$.
Let also $W^{1/2}_\beta (0,\infty)$ is the space of functions on $(0,\infty)$ with the norm
\begin{equation}\label{J26ca2}
||g||_{W^{1/2}_\beta (0,\infty)}=||g||_{V^{1/2}_\beta (0,\infty)}+||g||_{L^2_\beta(0,\infty)}
\end{equation}

We will use the following splitting of solutions of (\ref{K191}):
\begin{equation}\label{F20ba}
u=h(r)\phi_0+w,\;\;\int_0^{\alpha^*}w\phi_0d\theta=0\;\;\mbox{for allmost all $r>0$},
\end{equation}
where $\phi_0$ is given by (\ref{F7a}). Clearly
$$
h(r)=\int_0^{\alpha^*}u\phi_0d\theta.
$$
Multiplying the first equation in (\ref{K191}) by $\phi_0$ and integrating over $(0,\alpha^*)$
we get
\begin{equation}\label{F7aa}
({\mathcal M}+\tau^2)h=f_0+\phi(\alpha^*)r^{-1}g\;\;\mbox{on $(0,\infty)$}
\end{equation}
and
\begin{eqnarray}\label{K191a}
&&(-\Delta+\tau^2) w=F:=f-f_0-\phi(\alpha^*)r^{-1}g\phi_0\;\;\mbox{in $A$}\nonumber\\
&&r^{-1}(\partial_\theta -\rho_0)w=g\;\;\mbox{for $\theta=\alpha^*$}\nonumber\\
&&\partial_\theta w=0\;\;\mbox{for $\theta=0$}.
\end{eqnarray}

\begin{lemma}\label{LF20a} Let $\beta\in (1-\mu_1,1)$,   $q_0\in (0,q)$ and $\tau\in [q_0^{-1}\tau_k,q_0\tau_k]$, $\tau\neq\tau_k$. Let also
\begin{equation}\label{KoM2}
u=C\zeta(\tau r)\phi_0(\theta)\sin(\kappa\log \frac{1}{2}r+\gamma)+v,\;\;v\in W^2_\beta (A),\;\;\mbox{$C$ is constant,}
\end{equation}
solve (\ref{K191}), where $f\in L^2_\beta(A)$ and $g\in W^{1/2}_\beta(A)$. Then
\begin{equation}\label{J20b}
\tau^{-2\beta}|C|^2+\tau^{-2}||v||_{V^2_\beta(A)}+\tau^2||v||^2_{L^2_\beta(A)}\leq \frac{c}{|\tau-\tau_{k}|^2}\Big(||f||^2_{L^2_\beta(A)}
+||g||^2_{V^{1/2}_\beta(0,\infty)}+||g||_{L^2_\beta(0,\infty)}\Big).
\end{equation}
Moreover
\begin{equation}\label{F20bb}
C=\Big(\frac{\kappa\sinh(\pi\kappa)}{\pi}\Big)^{1/2}
\frac{1}{\kappa\sin(\kappa\log(\tau_k/\tau))}\int_0^\infty K_{i\kappa}(\tau r)(f_0+\phi(\alpha^*)r^{-1}g)rdr.
\end{equation}
\end{lemma}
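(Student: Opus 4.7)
The plan is to reduce the 2D problem for $u$ in the angle $A$ to a 1D problem for the $\phi_0$-projection $h$ plus a 2D problem for the orthogonal remainder $w$, then combine the one-dimensional bounds of Lemma~\ref{LF1} with the 2D bounds of Proposition~\ref{PF22} scaled appropriately in $\tau$. Writing $u=h(r)\phi_0(\theta)+w(r,\theta)$ as in (\ref{F20ba}) and inserting the Ansatz (\ref{KoM2}) gives
$$
h(r)=C\zeta(\tau r)\sin\bigl(\kappa\log\tfrac{1}{2}r+\gamma\bigr)+h_1(r),\qquad h_1=\int_0^{\alpha^*} v\,\phi_0\,d\theta,
$$
with $h_1\in \widehat{W}^2_\beta(0,\infty)$, so $h\in D_{\mathcal{M}}^\beta$. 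Projecting the first line of (\ref{K191}) onto $\phi_0$ and using the boundary condition at $\theta=\alpha^*$ yields (\ref{F7aa}) with right-hand side $f_0+\phi_0(\alpha^*)r^{-1}g$; the projections $f_0$ and $r^{-1}g$ satisfy $\|f_0\|_{\widehat{L}^2_\beta}\le c\|f\|_{L^2_\beta(A)}$ and $\|r^{-1}g\|_{\widehat{L}^2_\beta}\le c\|g\|_{V^{1/2}_\beta(0,\infty)}$ (this last bound uses the term $\int r^{2\beta-1}|g|^2\,dr$ present in the $V^{1/2}_\beta$ norm).

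Applying Lemma~\ref{LK1} to the ODE (\ref{F7aa}) immediately produces the representation (\ref{F20bb}) for $C$, while Lemma~\ref{LF1} with weight $\beta\in(-1,1)$ delivers
$$
\tau^{-2\beta}|C|^2+\tau^{-2}\|h_1\|^2_{\widehat{V}^2_\beta(0,\infty)}+\int_0^\infty r^{2\beta}\bigl(|\partial_r h_1|^2+\tau^2|h_1|^2\bigr)r\,dr\le \frac{c}{|\tau-\tau_k|^2}\Bigl(\|f\|^2_{L^2_\beta(A)}+\|g\|^2_{V^{1/2}_\beta}+\|g\|^2_{L^2_\beta}\Bigr).
$$
For the orthogonal component, $w=v-h_1\phi_0$ satisfies (\ref{K191a}) with $w\perp\phi_0$ pointwise in $r$. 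Expanding angularly in $\{\phi_k\}_{k\ge1}$, only the exponents $\pm\mu_k$ with $\mu_k\ge \mu_1>1$ enter; since $\beta\in(1-\mu_1,1)$ lies strictly between the exponents $1-\mu_1$ and $1+\mu_1$ (and avoids $\pm i\kappa$ because the $\phi_0$-mode has been suppressed), Proposition~\ref{PF22}(i) applies and gives a $\tau$-independent estimate for the elliptic problem $(-\Delta+\tau^2)w=F$. To extract the correct powers of $\tau$ I rescale $R=\tau r$; the angle $A$ is preserved, the boundary coefficient $\rho_0/r$ transforms into $\tau\rho_0/R$, and after multiplying the equation by $\tau^{-2}$ the problem becomes $(-\Delta+1)\widetilde{w}=\widetilde{F}$ in $A$ with boundary data $\tau^{-1}\widetilde{g}$. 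Applying the $\tau$-independent $V^2_\beta$ estimate and undoing the rescaling produces $\tau^{-2}\|w\|^2_{V^2_\beta(A)}+\tau^2\|w\|^2_{L^2_\beta(A)}\le c(\|F\|^2_{L^2_\beta(A)}+\|g\|^2_{V^{1/2}_\beta}+\|g\|^2_{L^2_\beta})$, and $\|F\|_{L^2_\beta}$ is controlled by the right-hand side of (\ref{J20b}) times a constant (no $|\tau-\tau_k|^{-1}$ factor is needed here since $w$ is orthogonal to the singular mode). Adding this to the 1D estimate for $h_1\phi_0$ and using $v=h_1\phi_0+w$ yields (\ref{J20b}).

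The main technical obstacle is the second point: obtaining $\tau^{-2}\|w\|_{V^2_\beta}^2+\tau^2\|w\|_{L^2_\beta}^2$ with a constant independent of $\tau$. A direct energy estimate against $\bar w$ only controls the pair $\|\nabla w\|^2+\tau^2\|w\|^2$ in unweighted form, so the crucial point is that after extracting the $\phi_0$-mode the two-dimensional resolvent estimate of Proposition~\ref{PF22} is uniform and the rescaling $R=\tau r$ converts it into the sharp $\tau$-dependent bound; care is needed to verify that the $V^{1/2}_\beta$-norm of $g$ scales in the expected way under $R=\tau r$ and that the commutator with the cut-off $\zeta(\tau r)$ in the definition of $v$ is absorbable on the right-hand side of (\ref{J20b}).
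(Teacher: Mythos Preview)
Your proposal is correct and follows essentially the same approach as the paper's own proof: split $u=h\phi_0+w$ via (\ref{F20ba}), apply the one-dimensional Lemmas~\ref{LK1} and~\ref{LF1} to the $\phi_0$-mode to obtain (\ref{F20bb}) and the bounds for $C$ and $h_1$, and handle the orthogonal component $w$ by the scaling $R=\tau r$ reducing to a $\tau$-independent problem (the paper does $\beta=0$ first via positivity of the form and then extends by local estimates, which is exactly your rescaled Proposition~\ref{PF22} argument). One minor slip: under $R=\tau r$ the boundary operator $r^{-1}(\partial_\theta-\rho_0)$ is scale-invariant (it becomes $R^{-1}(\partial_\theta-\rho_0)$, not with an extra factor $\tau$ in front of $\rho_0$), but this does not affect your argument since you correctly use the $\tau$-independent estimate afterwards.
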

\begin{proof} We use the representation (\ref{F20ba}). Then
$$
h(r)=C\zeta(\tau r)\sin(\kappa\log \frac{1}{2}r+\gamma)+\widehat{v},\;\;\widehat{v}\in\widehat{V}^2_\beta(0,\infty).
$$
and
$$
v=w+\widehat{v}\phi_0(\theta).
$$

Let us prove the inequality
\begin{equation}\label{J26bx}
\tau^{-2}||w||^2_{V^2_\beta(A)}+\tau^2||w||^2_{L^2_\beta(A)}\leq c\tau^{-2}(||F||^2_{L^2_\beta(A)}+||g||^2_{V^{1/2}_\beta (0,\infty)}+\tau ||g||^2_{L^2_\beta(0,\infty)}).
\end{equation}
By scaling we can reduce the estimate to the case $\tau=1$. For $\beta=0$ the corresponding quadratic form is positive definite and the proof is standard for the weak solution, after that it is enough to use local estimates\footnote{For local estimate near the origin we used the fact that $0\in (1-\mu_1,1)$, which follows from the assumption (\ref{K5y}).}. Extension to other values of $\beta$ can be done also by using local estimates near the origin and infinity.

Now the combination of the estimates (\ref{J26bx}), (\ref{J26bq}) and (\ref{J26ba}) leads to (\ref{J20b}) and (\ref{F20bb}).
\end{proof}

\begin{lemma}
Let $\beta \in (1-\mu_1,1)$ and let $q_0\in (0,q)$, $\tau\in [q_0^{-1}\tau_k,q_0\tau_k]$. Let also
\begin{equation}\label{F20bc}
\int_0^\infty K_{i\kappa}(\tau_k r)(f_0+\phi(\alpha^*)r^{-1}g)rdr=0.
\end{equation}
Then the solution (\ref{KoM2}) of (\ref{K191}) satisfies
\begin{equation}\label{J26bqq}
\tau^{-2\beta}|C|^2+\tau^{-2}||v||^2_{V^2_\beta (A)}+\tau^2||v||^2_{L^2_\beta(A)}
\leq c\tau^{-2}\Big(||f||^2_{L^2_\beta(A)}+||g||^2_{V^{1/2}_\beta(0,\infty)}+\tau ||g||^2_{L^2_\beta(0,\infty)}\Big),
\end{equation}
where $c$ does not depend on $\tau$, $f$ and $g$.
\end{lemma}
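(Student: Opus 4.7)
The plan is to mimic the proof of Lemma \ref{LF20a} but, thanks to the orthogonality hypothesis (\ref{F20bc}), to replace the appeal to (\ref{J26bq}) by the sharp one-dimensional estimate (\ref{J26bzz}) of Lemma \ref{LF1}. First I will use the same angular decomposition
\[
u=h(r)\phi_0(\theta)+w(r,\theta),\qquad \int_0^{\alpha^*}w(r,\theta)\phi_0(\theta)\,d\theta=0,
\]
so that $h=C\zeta(\tau r)\sin(\kappa\log\tfrac{1}{2}r+\gamma)+\widehat v$ with $\widehat v\in \widehat V^2_\beta(0,\infty)$ and $h$ solves the 1D equation (\ref{F7aa}) with right-hand side $F_1:=f_0+\phi_0(\alpha^*)r^{-1}g$, while $w$ solves (\ref{K191a}). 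The hypothesis (\ref{F20bc}) is precisely condition (\ref{F11ba}) for $F_1$.

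Before invoking (\ref{J26bzz}) for $h$, I must check that the companion condition (\ref{M4a}) also holds. This is automatic under the assumption $\tau\neq\tau_k$: since the self-adjoint operator ${\mathcal M}+\tau^2$ acts injectively on $D_{\mathcal M}$ away from the spectrum, any component $\alpha K_{i\kappa}(\tau_k r)$ in $h$ would produce a summand $\alpha(\tau^2-\tau_k^2)K_{i\kappa}(\tau_k r)$ in ${\mathcal M}h+\tau^2 h=F_1$, which would contradict the orthogonality of $F_1$ to $K_{i\kappa}(\tau_k r)$. Applying (\ref{J26bzz}) then yields
\[
\tau^{-2\beta}|C|^2+\tau^{-2}\|\widehat v\|^2_{\widehat V^2_\beta(0,\infty)}+\int_0^\infty r^{2\beta}\bigl(|\partial_r\widehat v|^2+\tau^2|\widehat v|^2\bigr)r\,dr\le c\tau^{-2}\int_0^\infty r^{2\beta}|F_1|^2 r\,dr.
\]

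For the orthogonal component $w$, I will reuse the estimate (\ref{J26bx}) from the proof of Lemma \ref{LF20a}, whose constant does not depend on $|\tau-\tau_k|$: on the $\phi_0$-orthogonal subspace the angular operator has spectrum $\{\mu_k^2:k\ge 1\}$, and under the standing hypothesis $\mu_1>1$ no resonance with $-\tau_k^2$ is possible, so
\[
\tau^{-2}\|w\|^2_{V^2_\beta(A)}+\tau^2\|w\|^2_{L^2_\beta(A)}\le c\tau^{-2}\bigl(\|f\|^2_{L^2_\beta(A)}+\|g\|^2_{V^{1/2}_\beta(0,\infty)}+\tau\|g\|^2_{L^2_\beta(0,\infty)}\bigr).
\]

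Finally, since $v=w+\widehat v\,\phi_0$, the triangle inequality combines the two displays into (\ref{J26bqq}), provided the $F_1$ integral is dominated by the right-hand side of the claim. This is routine: $\|f_0\|_{\widehat L^2_\beta(0,\infty)}\le c\|f\|_{L^2_\beta(A)}$ by Cauchy--Schwarz in $\theta$, while $\int_0^\infty r^{2\beta-1}|g|^2 dr\le \|g\|^2_{V^{1/2}_\beta(0,\infty)}$ is immediate from the definition of the weighted half-norm. The main obstacle is the automatic verification of (\ref{M4a}); once that is settled, the argument reduces to careful bookkeeping, the only technical subtlety being to control the commutator $[{\mathcal M},\zeta(\tau r)]$ and the remainder $C\tau^2\zeta(\tau r)\sin(\kappa\log\tfrac{1}{2}r+\gamma)$ uniformly in $\tau$, which is built into Lemma \ref{LF1} through the $\tau$-rescaled cut-off.
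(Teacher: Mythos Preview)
Your argument is correct and follows exactly the approach the paper intends: the paper's own proof is the single sentence ``The proof basically repeats the proof of Lemma~\ref{LF20a} but instead of (\ref{J26b}) and (\ref{J26ba}) we must use (\ref{J26bz}) and (\ref{J26bzz}),'' and you have unpacked precisely this, using the same splitting $u=h\phi_0+w$, the $w$-estimate (\ref{J26bx}), and the improved one-dimensional bounds. Your explicit verification of (\ref{M4a}) via self-adjointness (which the paper does not spell out but tacitly relies on, since both (\ref{J26bz}) and (\ref{J26bzz}) require it) is a useful addition; note only that at $\tau=\tau_k$ the solution is not unique and one must \emph{select} the one orthogonal to $K_{i\kappa}(\tau_k r)\phi_0$, so (\ref{M4a}) is then part of the specification of ``the solution'' rather than a consequence.
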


\begin{proof} The proof basically repeats the proof of Lemma \ref{LF20a} but instead of (\ref{J26b}) and (\ref{J26ba})
we must use (\ref{J26bz}) and (\ref{J26bzz})).

\end{proof}

\section{Self-adjoint extensions of the operator ${\mathcal L}$ with the boundary conditions  (\ref{K2bbzz})}\label{Sselfadj}

First consider the equation
\begin{equation}\label{K2bba}
-\Delta u+\sigma u=f\;\;\mbox{in $\Omega$}
\end{equation}
supplied with the boundary conditions
\begin{equation}\label{K2bbq}
\partial_\nu u-r^{-1}\rho u=g
\;\;\mbox{on $S$}
\end{equation}
and
\begin{eqnarray}\label{K2bbb}
&&u=0\;\;\mbox{on $B$}\nonumber\\
&&\partial_xu|_{x=0}=\partial_xu|_{x=\Lambda/2}=0.
\end{eqnarray}
Let $V_\beta^{2}(\Omega)$,  $\beta\in\Bbb R$, be the  space of functions in $\Omega$ with finite norm
$$
||u||_{V^{2}_\beta(\Omega)}=\Big(\int_\Omega\sum_{i+j\leq 2}|\partial_x^i\partial_y^ju|^2r^{2(\beta-2(2-i-j))}dxdy\Big)^{1/2}.
$$
The space $L^2_\beta(\Omega)$ consists of functions $f$ in $\Omega$ with the finite norm
$$
||u||_{L^2_\beta(\Omega)}=\Big(\int_\Omega |f|^2r^{2\beta}dxdy\Big)^{1/2}.
$$
We introduce also the following subspace of $V_\beta^{2}(\Omega)$
$$
\widetilde{V}_\beta^{2}(\Omega)=\{u\in V_\beta^{2}(\Omega)\,:\,u|_B=0,\;u|_{x=0}=0\;\mbox{and}\;u|_{x=\Lambda/2}=0\}.
$$
The space $V^{1/2}_\beta(S)$ consists of functions defined on $S$ and has the norm
$$
||g||_{V^{1/2}_\beta(S)}=\inf\Big\{||u||_{V^{1}_\beta(\Omega)}\,:\,u\in \widetilde{V}^{1}_\beta(\Omega),\, u|_{S}=g\Big\}.
$$
Another equivalent norm is the following (see \cite{MP78})
$$
\Big(\int_0^{\Lambda/2}\int_0^{\Lambda/2}\frac{|s^\beta g(s)-x^\beta g(x)|^2}{|s-x|^2}dsdx+\int_0^{\Lambda/2}x^{2\beta-1}|g|^2dx\Big)^{1/2}.
$$
Here we used the parametrisation  $y=\eta(x)$ on $S$.

We put
$$
{\mathcal L}u=-\Delta u+\sigma u,\;\;{\mathcal B}u=(\partial_\nu-r^{-1}\rho) u|_S.
$$
One can verify that the operator
$$
({\mathcal L},{\mathcal B})\,:\,\widetilde{V}_\beta^{2}(\Omega)\rightarrow L^2_\beta(\Omega)\times V^{1/2}_\beta(S)
$$
is continuous.

Using Proposition \ref{PF22} and well known results from theory of boundary value problems in domains with angular points on the boundary (see \cite{Nazarov1994} or \cite{KMR97}), we get the following assertion

\begin{proposition}\label{P2} {\rm (i)} If $\beta-1\neq 0$ and $\beta+\mu_j-1\neq 0$ for $j=\pm 1,\pm 2,\ldots$ then the operator
$$
({\mathcal L},{\mathcal B}):\widetilde{V}_\beta^{2}(\Omega)\rightarrow L^2_\beta(\Omega)\times V^{1/2}_\beta(S)
$$
is Fredholm.

{\rm (ii)} Let $\beta_j$, $j=1,2$, satisfy $1-\alpha<\beta_1<1<\beta_2<\mu_1+1$
  and let $f\in L^{2}_{\beta_1}(\Omega)$,  $g\in V^{1/2}_{\beta_1}(S)$ and $u_2\in V_{\beta_2}^{2}(\Omega)$ be a solution to {\rm (\ref{K2bba})--(\ref{K2bbb})}. Then
$$
u_2=\zeta(r)(c_+r^{i\kappa }+c_-r^{^-i\kappa })\varphi_0(\kappa\theta)+u_1,
$$
where $u_1\in V^{2}_{\beta_1}(\Omega)$   and $c_{\pm}\in\Bbb C$.

{\rm (iii)} Let $\beta_j$, $j=1,2$, satisfy $1-\mu_1<\beta_2<\beta_1<1 $
  and let $f\in L^{2}_{\beta_2}(\Omega)$, $g\in V^{1/2}_{\beta_2}(S)$ and $u\in V^{2}_{\beta_1}(\Omega)$ be a solution to {\rm (\ref{K2bba})--(\ref{K2bbb})}. Then
 $u\in V^{2}_{\beta_2}(\Omega)$.
\end{proposition}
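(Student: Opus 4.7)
The plan is to deduce Proposition \ref{P2} from the model-problem results of Proposition \ref{PF22} via the Kondratiev calculus for elliptic boundary value problems on domains with a corner point (see \cite{KMR97,Nazarov1994}). Away from the corner $P:=(0,\eta(0))$, $S$ is $C^2$, $\rho\in C^1$, and $(\mathcal{L},\mathcal{B})$ is a classical Agmon--Douglis--Nirenberg boundary value problem, so all of the singular behaviour of the problem is concentrated at $P$, where $S$ meets the vertical line $\{x=0\}$ at the angle $\alpha^*$. This is exactly the local geometry of the model studied in Proposition \ref{PF22}.

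For \textbf{(i)}, I would introduce a partition of unity $\chi_0+\chi_1=1$ with $\chi_0$ supported in a small ball around $P$, and inside this ball perform a $C^{1,\alpha}$ change of variables straightening $S$ near $P$ (using (\ref{Kk1})). Under this change, $\Omega$ becomes a perturbation of the angle $A$; by (\ref{Kk1}) and (\ref{Kk1a}), $\mathcal{L}$ differs from $-\Delta$ by a bounded zero-order term, and $\mathcal{B}$ becomes $r^{-1}(\partial_\theta-\rho_0)+R$, where $R$ gains a factor $r^\alpha$ relative to the principal part. The hypotheses on $\beta$ are exactly those that make the frozen model invertible between $V_\beta^2(A)$ and $L_\beta^2(A)\times V_\beta^{1/2}(0,\infty)$, by Proposition \ref{PF22}(i). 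Gluing this local inverse to an interior parametrix provided by standard elliptic theory yields a right parametrix for $(\mathcal{L},\mathcal{B})$ whose error operators come from commutators with $\chi_0,\chi_1$ (supported away from $P$, hence compact) and from the $O(r^\alpha)$ perturbation near $P$ (compact in the weighted calculus thanks to the $r^\alpha$ gain). This produces the Fredholm property.

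Parts \textbf{(ii)} and \textbf{(iii)} are instances of the Kondratiev asymptotic-decomposition theorem: any two solutions $u_1\in V_{\beta_1}^2(\Omega)$, $u_2\in V_{\beta_2}^2(\Omega)$ of the same boundary value problem with $\beta_1<\beta_2$ differ, near $P$, by a finite sum of homogeneous solutions of the model problem whose exponents $\lambda$ lie in the strip $1-\beta_2<\operatorname{Re}\lambda<1-\beta_1$. By Proposition \ref{PF22}, these exponents are exactly the pencil eigenvalues $\pm i\kappa$ (with angular factor proportional to $\cosh(\kappa\theta)$) and $\pm\mu_j$, $j=1,2,\ldots$. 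In the setting of (ii) the strip $(1-\beta_2,1-\beta_1)\subset(-\mu_1,\alpha)$ contains only the pair $\pm i\kappa$ (both with real part $0$), and this produces exactly the claimed term $\zeta(r)(c_+ r^{i\kappa}+c_- r^{-i\kappa})\varphi_0(\kappa\theta)$. In (iii) the strip $(1-\beta_1,1-\beta_2)\subset(0,\mu_1)$ contains no pencil eigenvalue at all (we use $\mu_1>1$), so the expansion is empty and $u\in V_{\beta_2}^2(\Omega)$ automatically.

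The main obstacle is the limited regularity of $\eta$ and $\rho$ at the corner. Classical Kondratiev theory is usually developed for smooth or analytic boundaries and coefficients, whereas here $\eta$ is only $C^2$ away from $P$ with the one-sided control $\eta'=-a_0+O(r^\alpha)$, and $\rho$ satisfies only (\ref{Kk1a}). The delicate step is verifying that after the straightening change of variables the leftover perturbations of $\mathcal{L}$ and $\mathcal{B}$ really do pick up a factor $r^\alpha$ in the weighted calculus, so that they act as compact lower-order operators between the relevant weighted spaces. This $r^\alpha$ gain is precisely the reason why the lower restrictions $\beta>1-\alpha$ in (ii) and $\beta_1,\beta_2>1-\mu_1$ in (iii) are imposed. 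Once this point is checked, (i)--(iii) follow from the cited Kondratiev framework.
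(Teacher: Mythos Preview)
Your proposal is correct and takes essentially the same approach as the paper: the paper itself does not give a detailed argument for Proposition~\ref{P2} but simply invokes Proposition~\ref{PF22} together with the standard Kondratiev theory from \cite{Nazarov1994,KMR97}, which is exactly what you spell out. Your identification of the relevant pencil eigenvalues in the strips for parts~(ii) and~(iii), and your remark that the $O(r^\alpha)$ control on the perturbation is what forces the restriction $\beta_1>1-\alpha$, are accurate elaborations of what the paper leaves implicit.
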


One can verify that the operator ${\mathcal L}$ is symmetric on
$$
D_0=\{u\in\widetilde{V}_0^{2}(\Omega)\,:\,{\mathcal B}u=0\}.
$$
 To obtain "real valued", self-adjoint extensions of this operator we proceed as follows. We choose $\gamma\in [0,\pi)$ and put
 \begin{equation}\label{F22aa}
 w_\gamma=\sin(\kappa\log\frac{1}{2}r+\gamma)\phi_0,
 \end{equation}
Let $\widehat{w}$ be  such that
$$
{\mathcal L}\widehat{w}\in L^2(\Omega)\;\;\mbox{and}\;\; {\mathcal B}\widehat{w}=0\;\;\mbox{on} \; S
$$
 and
 $$
 \widehat{w}=\zeta(r)w_\gamma+w,\;\;w\in \widetilde{V}_{\beta_*}^{2}(\Omega)\;\; \mbox{with a certain $\beta_*\in (1-\alpha,1)$.}
 $$
Since the function $w$ satisfies (\ref{K2bba})-(\ref{K2bbb}) with
$$
f=-({\mathcal L}+\tau^2)\zeta(\tau r) w_\gamma,\;\;\;g=-{\mathcal B}w_\gamma,
$$
 using properties of functions $\eta$, $\rho$ and $\sigma$, we get that $f\in L_{\beta_*}(\Omega)$ and $g\in V^{1/2,2}_{\beta_*}(S)$ with any $\beta_*\in (1-\alpha,1)$ and the existence of such $w$  follows from Proposition \ref{P2}(i). Moreover $w\in \tilde{V}_{\beta_*}^{2}(\Omega)$ for any $\beta_*\in (1-\alpha,1)$.
We define a domain of ${\mathcal L}$ as
\begin{equation}\label{J20a}
{\mathcal D}_\gamma=\{u=a\widehat{w}+v\,:\,a\in\Bbb C,\;v\in D_0,\;{\mathcal B}u=0\}
\end{equation}
In the next proposition we will show that this definition does not depend on the choice of $\widehat{w}$ and $\zeta$ and determines only by $w_\gamma$ and that the operator ${\mathcal L}$ with the domain ${\mathcal D}_\gamma$ is self-adjoint.

\begin{proposition}\label{P22aa} (i) There exists a function $\widehat{w}$ introduced above. The domain ${\mathcal D}_\gamma$ does not depend on the choice of $\widehat{w}$ and cut-off function $\zeta$.

(ii) The operator ${\mathcal L}$ defined on the domain ${\mathcal D}_\gamma$ is self-adjoint.

\end{proposition}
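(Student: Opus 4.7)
My plan for part~(i) is an ansatz-plus-correction argument. I start from $\widehat{w}_0:=\zeta(r)w_\gamma$ and record the residuals $f_0:=-{\mathcal L}\widehat{w}_0$, $g_0:=-{\mathcal B}\widehat{w}_0$. These are supported in a neighbourhood of the corner, and their leading angular singularity cancels because $w_\gamma$ solves the homogeneous model problem in the angle $A$ with the Robin datum determined by $\rho_0$ and $\alpha^{*}$; the hypotheses (\ref{Kk1}) and (\ref{Kk1a}) together with boundedness of $\sigma$ leave only perturbative contributions of order $r^{\alpha-1}$ or smaller. These land in $L^2_{\beta_*}(\Omega)$ and $V^{1/2}_{\beta_*}(S)$ for every $\beta_*\in(1-\alpha,1)$. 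Applying Proposition~\ref{P2}(i) yields $w\in\widetilde{V}^{2}_{\beta_*}(\Omega)$ with $({\mathcal L},{\mathcal B})w=(f_0,g_0)$, and I set $\widehat{w}:=\widehat{w}_0+w$. If $(f_0,g_0)$ fails to lie in the range of the Fredholm operator, a finite-rank correction in $\widetilde{V}^{2}_0(\Omega)$ absorbs the cokernel obstruction without affecting the singular part.

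For the independence assertion, suppose $\widehat{w}_1=\zeta_1 w_\gamma+w_1$ and $\widehat{w}_2=\zeta_2 w_\gamma+w_2$ are two admissible choices. Their difference equals $(\zeta_1-\zeta_2)w_\gamma+(w_1-w_2)$. The first summand vanishes in a neighbourhood of the corner and hence lies in $V^{2}_0(\Omega)$. The second summand belongs to $V^{2}_{\beta_*}(\Omega)$, satisfies the homogeneous boundary conditions, and its image under $({\mathcal L},{\mathcal B})$ sits in $L^2(\Omega)\times V^{1/2}_0(S)$ because $\mathcal{L}\widehat{w}_i\in L^2(\Omega)$ for both $i$. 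Since the standing hypothesis $\mu_1>1$ gives $0\in(1-\mu_1,\beta_*)$, Proposition~\ref{P2}(iii) upgrades its regularity from $V^{2}_{\beta_*}(\Omega)$ to $V^{2}_0(\Omega)$, which is precisely the independence claim.

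Part~(ii) splits into symmetry and maximality. For symmetry I apply Green's identity on $\Omega\setminus B_\varepsilon$, where $B_\varepsilon$ is a small disc centred at $(0,\eta(0))$, and let $\varepsilon\to 0$. All boundary contributions along $S$, $B$ and the two vertical sides cancel identically thanks to the boundary conditions encoded in ${\mathcal D}_\gamma$. The only surviving term is a corner integral over the arc $\partial B_\varepsilon\cap\Omega$. Writing $u_i=a_i\widehat{w}+v_i$ with $v_i\in D_0$, the cross contributions containing at least one $v_i$ vanish in the limit by the Hardy-type control $r^{-2}v_i\in L^2$ implicit in $V^{2}_0(\Omega)$, while the singular-singular term tends to $a_1\overline{a_2}\,q(w_\gamma,w_\gamma)$ with $q$ the symplectic form (\ref{K4s}). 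Since $w_\gamma$ is real-valued, the two terms in the integrand of $q$ coincide pointwise, so $q(w_\gamma,w_\gamma)=0$ and symmetry follows.

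For maximality I invoke the classical conical-point asymptotics. If $u\in L^2(\Omega)$ lies in the domain of the adjoint with ${\mathcal L}^{*}_\gamma u=F\in L^2(\Omega)$, then testing against $\phi\in C_c^\infty(\Omega)$ gives ${\mathcal L}u=F$ distributionally, while testing against $\phi$ supported away from the corner delivers all three homogeneous boundary conditions classically on $\partial\Omega\setminus\{(0,\eta(0))\}$. Proposition~\ref{P2}(ii) then produces the asymptotic expansion $u=\zeta(r)(c_+ r^{i\kappa}+c_- r^{-i\kappa})\phi_0+u_0$ with $u_0\in V^{2}_{\beta_*}(\Omega)$ and $(c_+,c_-)\in\Bbb C^{2}$. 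Feeding this expansion into the adjoint identity with $\phi=\widehat{w}$ and repeating the arc computation of the symmetry step reduces everything to the single scalar condition $q(c_+ r^{i\kappa}\phi_0+c_- r^{-i\kappa}\phi_0,\,w_\gamma)=0$. By the explicit description (\ref{K4s1}) of the real maximal-isotropic subspace ${\mathcal Y}_r$ together with the nondegeneracy of $q$ on the two-dimensional complex space ${\mathcal X}$, this relation forces the singular part of $u$ to be a complex multiple of $w_\gamma$, placing $u\in{\mathcal D}_\gamma$. I expect the main obstacle to be precisely this last step: passing rigorously from the weak adjoint identity to the symplectic orthogonality, controlling the $V^{2}_{\beta_*}$ remainders in the $\varepsilon\to 0$ limit, and identifying the resulting isotropic condition with the one-parameter family ${\mathcal Y}_r$ indexed by $\gamma\in[0,\pi)$.
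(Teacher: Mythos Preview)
Your argument is correct and follows essentially the same path as the paper: ansatz plus Fredholm correction for existence, Proposition~\ref{P2}(iii) for independence, Green's formula on $\Omega\setminus B_\varepsilon$ with the arc integral collapsing to $a_1\overline{a_2}\,q(w_\gamma,w_\gamma)=0$ for symmetry, and the conical asymptotics of Proposition~\ref{P2}(ii) together with nondegeneracy of $q$ on ${\mathcal X}$ for maximality. One cosmetic point: before invoking Proposition~\ref{P2}(ii) in the adjoint step the paper first inserts a local elliptic estimate to pass from $u\in L^2(\Omega)$ to $u\in V^{2}_{2}(\Omega)$, and it applies Proposition~\ref{P2}(ii) with $\beta_1=0$ (legitimate since $\mu_1>1$) so that the remainder lands directly in $V^{2}_{0}(\Omega)$ rather than $V^{2}_{\beta_*}(\Omega)$; with those two minor adjustments your proof coincides with the paper's.
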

\begin{proof} (i) The existence of such $\widehat{w}$ we have proved above the proposition. If we have two such functions $\widehat{w}_1$ and
$\widehat{w}_2$ then the difference $W=\widehat{w}_1-\widehat{w}_2\in \widetilde{V}_{\beta_*}^{2}(\Omega)$  and $W$ satisfies (\ref{K2bba})-(\ref{K2bbb}) with $f\in L^2(\Omega)$ and $g=0$.

Applying Proposition \ref{P2} (iii) with $\beta_1=\beta_*$ and  $\beta_2=2$ and using that $\mu_1>1$, we obtain $W\in V^{2}_2(\Omega)\subset L^2(\Omega)$, which proves the result.

(ii) Let $B_\varepsilon$, where $\varepsilon$ is a small positive number, be the ball of radius $\varepsilon$ centered at $(0,R)$. We put
$$
\Omega_\varepsilon=\Omega\setminus B_\varepsilon,\;\;S_\varepsilon=S\setminus B_\varepsilon.
$$
Let also $\gamma_\varepsilon=\{(x,y)\in\Omega\;:\,r=\varepsilon\}$. Let $U_k=a_k\widehat{w}+v_k\in D_\gamma$. 
Then
\begin{eqnarray}\label{Sept23a}
&&\int_{\Omega_\varepsilon}\Big({\mathcal L} U_1\overline{U_2}-U_1{\mathcal L}\overline{U_2}\Big)dxdy\nonumber\\
&&=\int_{\gamma_\varepsilon}\Big(\partial_rU_1\overline{U_2}-U_1\partial_r\overline{U_2}\Big)rd\theta\rightarrow
a_1\overline{a_2}\lim_{r\to 0}\int_{-\pi/3}^{\pi/3}\Big(\partial_r\widehat{w}\overline{\widehat{w}}-\widehat{w}\partial_r\overline{\widehat{w}}\Big)rd\theta=0.
\end{eqnarray}
This shows that the operator ${\mathcal L}$ is symmetric on the domain ${\mathcal D}_\gamma$. We denote this operator by ${\mathcal L}_\gamma$. Consider the adjoint to ${\mathcal L}_\gamma$ operator
$$
{\mathcal L}_\gamma^*:L^2(\Omega)\rightarrow {\mathcal D}_{\gamma}^*.
$$
In order to prove our proposition it is sufficient to show that ${\mathcal L}_\gamma^*u=f\in L^2(\Omega)$ implies $u\in {\mathcal D}_{\gamma}$ and ${\mathcal L}_\gamma u=f$.
  Assume that ${\mathcal L}_\gamma^*u=f\in L^2(\Omega)$. Then
$$
\int_\Omega \Big({\mathcal L} U\overline{u}-U\overline{f}\Big)dxdy=0\;\;\mbox{$\forall U\in {\mathcal D}_\gamma$}.
$$
Using local estimates for elliptic boundary valued problems one can show that $u\in V^{2}_\beta(\Omega)$ with $\beta=2$ since $u\in L^2(\Omega)$. Using that $f\in L^2(\Omega)$ together with Proposition \ref{P2}(ii), we get that $u=\zeta(r)w+v$, where $v\in V^{2}_0(\Omega)$ and $w\in {\mathcal X}$. Since the form $q$ is non-generating on ${\mathcal X}$, calculations similar to (\ref{Sept23a}) show that $w$ must be proportional to $w_\gamma$. This proves that the operator ${\mathcal L}$ defined on the domain ${\mathcal D}_\gamma$ is self-adjoint.
\end{proof}

As a consequence of the above result we get the following observation
\begin{corollary}\label{cor1} the operator
\begin{equation}\label{Okt6a}
{\mathcal L}:{\mathcal D}_\gamma\rightarrow L^2(\Omega)
\end{equation}
is Fredholm with the index $0$. (In what follows we denote the operator (\ref{Okt6a}) by ${\mathcal L}_\gamma$.)
\end{corollary}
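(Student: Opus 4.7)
The plan is to derive this corollary from Proposition~\ref{P22aa}(ii) by establishing that $\mathcal{L}_\gamma$ has compact resolvent. Since $\mathcal{L}_\gamma$ is self-adjoint, compactness of the resolvent will immediately force the spectrum to be purely discrete, consisting of isolated real eigenvalues of finite multiplicity; hence $\mathcal{L}_\gamma - \lambda$ will be Fredholm for every $\lambda \in \mathbb{C}$, with index $0$ automatic because self-adjointness identifies the cokernel with the kernel. Taking $\lambda = 0$ will yield the claim.

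First, I would equip $\mathcal{D}_\gamma$ with the natural Hilbert norm $\|u\|_{\mathcal{D}_\gamma}^2 = |a|^2 + \|v\|_{V^2_0(\Omega)}^2$ associated to the decomposition (\ref{J20a}); by Proposition~\ref{P22aa} and the closed graph theorem this is equivalent to the graph norm of $\mathcal{L}_\gamma$. Since $i$ lies in the resolvent set of the self-adjoint operator, $(\mathcal{L}_\gamma - i)^{-1}$ maps $L^2(\Omega)$ boundedly into $\mathcal{D}_\gamma$. Thus compactness of $(\mathcal{L}_\gamma - i)^{-1} : L^2(\Omega) \to L^2(\Omega)$ reduces to compactness of the embedding $\mathcal{D}_\gamma \hookrightarrow L^2(\Omega)$.

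For this embedding, the contribution from $a\widehat{w}$ is automatically compact since it lies in a one-dimensional subspace, so the question reduces to showing that $\widetilde{V}^2_0(\Omega) \hookrightarrow L^2(\Omega)$ compactly. Away from the corner $(0,\eta(0))$, the Kondratiev norm dominates the standard $H^2$ norm on each subdomain $\Omega \setminus B_\delta$, and Rellich's theorem will yield precompactness there. Near the corner, the strong weight $r^{-4}$ in the $V^2_0$ norm produces the uniform tail estimate
\begin{equation*}
\int_{\Omega \cap B_\delta} |v|^2\, dx\, dy \leq \delta^4 \int_\Omega r^{-4}|v|^2\, dx\, dy \leq \delta^4 \|v\|_{V^2_0(\Omega)}^2,
\end{equation*}
and a standard diagonal extraction will then produce an $L^2(\Omega)$-convergent subsequence from any bounded sequence in $\mathcal{D}_\gamma$.

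The only step that is not entirely formal is the compact embedding near the corner, which I expect to be the main (if minor) obstacle; fortunately the Kondratiev weight $r^{-4}$ makes the corner tail estimate immediate, so compactness is a standard feature of Kondratiev spaces on bounded domains with conical singularities rather than something requiring new analysis.
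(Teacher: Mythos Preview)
Your argument is correct and is essentially what the paper has in mind: the paper records the corollary simply ``as a consequence of'' Proposition~\ref{P22aa}(ii) without further detail, and in the very next paragraph invokes exactly the compact embedding $\mathcal{D}_\gamma \hookrightarrow L^2(\Omega)$ that you establish. The only organisational difference is that the paper states the Fredholm property first and then combines it with compactness of the inclusion to reach discreteness of the spectrum (Proposition~\ref{PM10}), whereas you run the logic in the opposite order (compact embedding $\Rightarrow$ compact resolvent $\Rightarrow$ discrete spectrum $\Rightarrow$ Fredholm index $0$ at every $\lambda$); both orderings are valid, and your presentation actually makes the dependency on compactness explicit rather than leaving it implicit.

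One minor streamlining: once you know $\mathcal{L}_\gamma - i : \mathcal{D}_\gamma \to L^2(\Omega)$ is an isomorphism and the inclusion $\mathcal{D}_\gamma \hookrightarrow L^2(\Omega)$ is compact, you can conclude directly that $\mathcal{L}_\gamma = (\mathcal{L}_\gamma - i) + i\cdot\iota$ is a compact perturbation of an index-$0$ Fredholm operator, hence itself Fredholm of index $0$, without passing through the spectral description. This avoids the (harmless but unnecessary) detour through discreteness of the spectrum.
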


Since the inclusion $D_\gamma\to L^2(\Omega)$ is compact and the operator (\ref{Okt6a}) is Fredholm with index $0$ (according to Corollary \ref{cor1}), the spectrum of ${\mathcal L}_\gamma$ consists of isolated eigenvalues of finite multiplicities with possibly accumulation points at $\pm\infty$. Thus we get

\begin{proposition}\label{PM10}
The spectrum of the operator ${\mathcal L}_\gamma$ consists of eigenvalues of finite multiplicity with the accumulating points at $+\infty$ and  $-\infty$.
\end{proposition}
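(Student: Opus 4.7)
The plan is to derive the proposition directly from the three ingredients already assembled just before the statement: self-adjointness of ${\mathcal L}_\gamma$ (Proposition~\ref{P22aa}), the Fredholm property with index $0$ (Corollary~\ref{cor1}), and the compactness of the embedding ${\mathcal D}_\gamma\hookrightarrow L^2(\Omega)$, which the authors assert but which is worth spelling out. The whole argument is the standard ``discrete spectrum via compact resolvent'' scheme for unbounded self-adjoint operators.

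First I would use self-adjointness to fix a point $\mu\in\mathbb{C}\setminus\mathbb{R}$ in the resolvent set, say $\mu=i$, so that the resolvent $R(i)=({\mathcal L}_\gamma-i)^{-1}\colon L^2(\Omega)\to{\mathcal D}_\gamma$ is a bounded bijection. By the open mapping theorem, the graph norm on ${\mathcal D}_\gamma$ is equivalent to the natural norm coming from the decomposition $u=C\widehat{w}+v$, namely $\|u\|_{{\mathcal D}_\gamma}=(|C|^2+\|v\|_{V_0^2(\Omega)}^2)^{1/2}$, so continuity of $R(i)$ into ${\mathcal D}_\gamma$ in this norm is automatic.

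Second, I would verify compactness of ${\mathcal D}_\gamma\hookrightarrow L^2(\Omega)$. Given a bounded sequence $u_n=C_n\widehat{w}+v_n$ with $|C_n|+\|v_n\|_{V_0^2}\le M$, Bolzano--Weierstrass extracts $C_n\to C$ in $\mathbb{C}$. For the $v_n$, the $V_0^2$-norm dominates the $H^2$-norm on any annular region $\{r>\varepsilon\}\cap\Omega$, so the classical Rellich theorem provides a subsequence converging in $L^2$ there; near the corner, the bound $\int_\Omega r^{-4}|v_n|^2\,dxdy\le M^2$ immediately yields $\int_{\Omega\cap\{r<\varepsilon\}}|v_n|^2\,dxdy\le\varepsilon^4 M^2$, so the $L^2$-mass of the $v_n$ near the corner is uniformly small. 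A diagonal extraction then produces $v_n\to v$ in $L^2(\Omega)$, hence $u_n\to C\widehat{w}+v$ in $L^2(\Omega)$. Composing, $R(i)$ viewed as an operator $L^2(\Omega)\to L^2(\Omega)$ is compact. Being the resolvent of a self-adjoint operator, $R(i)$ is also normal, so by the spectral theorem its spectrum is at most countable with $0$ as the only possible accumulation point and each nonzero eigenvalue of finite multiplicity. Transporting back through $\lambda=\mu_n^{-1}+i$, and using that ${\mathcal L}_\gamma$ is self-adjoint (so all eigenvalues are real), one obtains that $\mathrm{spec}({\mathcal L}_\gamma)$ is a countable set $\{\lambda_n\}\subset\mathbb{R}$ of finite-multiplicity eigenvalues with $|\lambda_n|\to\infty$, so that the only possible accumulation points of the spectrum are $+\infty$ and $-\infty$, as claimed.

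The main obstacle I anticipate is the compact embedding ${\mathcal D}_\gamma\hookrightarrow L^2(\Omega)$: the singular function $\widehat{w}$ itself does not belong to $V_0^2(\Omega)$, because its leading part $w_\gamma=\sin(\kappa\log\tfrac12 r+\gamma)\phi_0(\theta)$ has infinite $r^{-4}|\cdot|^2$ integral at the corner. This forces the use of the explicit one-parameter decomposition $u=C\widehat{w}+v$, which cleanly splits the compactness problem into a trivial scalar piece (boundedness of the coefficients $C_n$) and a weighted Rellich statement for $V_0^2(\Omega)$; the corner tightness needed for the latter is supplied for free by the $r^{-4}$ weight in the $V_0^2$-norm. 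The question of whether \emph{both} $+\infty$ and $-\infty$ are genuinely accumulation points of the spectrum (and not just possible ones) is the deeper content of the main Theorem~\ref{Th1.1} and lies outside the scope of this proposition.
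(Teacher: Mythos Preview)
Your proposal is correct and follows essentially the same route as the paper. The paper's argument is the one-sentence observation immediately preceding the proposition: compactness of the inclusion ${\mathcal D}_\gamma\hookrightarrow L^2(\Omega)$ together with the Fredholm index-zero property from Corollary~\ref{cor1} forces the spectrum to consist of isolated eigenvalues of finite multiplicity with only $\pm\infty$ as possible accumulation points; you have simply unpacked this by passing through the compact-resolvent formulation and by writing out explicitly the Rellich-type argument for the embedding (the $r^{-4}$ weight giving tightness at the corner), which the paper takes for granted. Your closing remark that the proposition, as argued, only identifies $\pm\infty$ as \emph{possible} accumulation points---the genuine accumulation at $-\infty$ being the substance of Theorem~\ref{Th1.1}---matches the paper's own phrasing ``possibly accumulation points at $\pm\infty$'' in the sentence leading into the proposition.
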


Since every element  $u\in{\mathcal D}_\gamma$ admits a unique representation $u=c\widehat{w}+v$, where $v\in V^{2}_0(\Omega)$ and $c$ is a constant we define the norm in ${\mathcal D}_\gamma$ as
\begin{equation}\label{J1a}
||u||_{{\mathcal D}_\gamma}=\Big(||u||_{V^{2}_0(\Omega)}^2+|c|^2\Big)^{1/2}.
\end{equation}

\section{Proof of Theorem \ref{Th1.1}}\label{Sproof}

By Propositions \ref{P22aa}(ii) and \ref{PM10} it remains to prove the asymptotics (\ref{K49s}) and simplicity of large negetive eigenvalues in Theorem \ref{Th1.1}.
The proof  consists of several steps. First by a suitable change of variables we represent the problem as as a small, in a certain sense, perturbation of a problem whose negative eigenvalues  can be analyzed explicitly. The important property of the unperturbed problem is the fact that the distance between neighbor eigenvalues is comparable with the absolute value of corresponding eigenvalues. A specific of this representation consists of its dependence on a certain parameter $\delta$ and the perturbation analysis involves careful study of dependence of this perturbation analysis on this parameter. An additional complexity is brought by possibli different domains of perturbed and unperturbed operators. This can be overcome by extending domains of this operators by using weights and observation that the eigenvalues are preserved  for both operators.

\subsection{Change of variables}

We choose  functions $\xi=\xi(x)$ and $\chi=\chi(x)$ defined on $[0,\Lambda/2]$ and belonging to $C^{2}([0,\Lambda/2])$ and
$C^{1}([0,\Lambda/2])$ respectively, and subject to the following properties:

(i) $\xi(x)=\eta(0)-a_0x$ and $\chi(x)=\rho_0/R$ for $x\in(0,3\delta)$, where $R=(x^2+(\xi(x)-\xi(0))^2)^{1/2}$;

(ii) $\xi(x)=\eta(x)$ and $\chi(x)=\rho(x)$ for $x\in [\Lambda/2-\delta,\Lambda/2]$;

(iii)
$$
|\xi(x)-\eta(x)|+|\xi'(x)-\eta'(x)|\leq c\delta^\alpha\;\;\mbox{for $x\in [0,\Lambda/2]$}
$$
 and
$$
|\chi(x)-\rho(x)|\leq c\delta^\alpha\;\;\mbox{for $x\in [0,\Lambda/2]$}.
$$
Here $c$ is a certain constant independent of $\delta$ and $\delta$ is a certain positive number, which will be chosen later.

\bigskip
Let us make the following change of variables:
$$
X=x,\;\;Y=\frac{y\xi(x)}{\eta(x)}\;\;\mbox{and}\;\; U(X,Y)=u(X,Y\frac{\eta(x)}{\xi(x)}).
$$
Then
$$
\partial_x=\partial_X+Y\Big(\frac{\xi'}{\xi}-\frac{\eta'}{\eta}\Big)\partial_Y,\;\;\partial_y=\frac{\xi}{\eta}\partial_Y.
$$
Therefore the problem (\ref{K2bb}) becomes
\begin{eqnarray*}
&&LU:=-\Big(\partial_X+Y\Big(\frac{\xi'}{\xi}-\frac{\eta'}{\eta}\Big)\partial_Y\Big)
\Big(\partial_XU+Y\Big(\frac{\xi'}{\xi}-\frac{\eta'}{\eta}\Big)\partial_YU\Big)-\frac{\xi^2}{\eta^2}\partial_Y^2U+\sigma U=\lambda U,\\
&&BU:=\frac{(-\eta',1)}{\sqrt{1+\eta'^2}}\Big(\partial_XU+Y\Big(\frac{\xi'}{\xi}-\frac{\eta'}{\eta}\Big)\partial_YU,
\frac{\xi}{\eta}\partial_YU\Big)-r^{-1}\rho U=0
\end{eqnarray*}
and
\begin{equation}\label{Okt21b1}
U(X,0)=0,\;\;\partial_XU|_{X=0}=0,\;\;\partial_XU|_{X=\Lambda/2}=0
\end{equation}

We represent the operators ${\mathcal L}$ and ${\mathcal B}$ as
\begin{equation}\label{F24aa}
{\mathcal L}={\mathcal L}_0+{\mathcal L}_1\;\;\mbox{ and}\;\; {\mathcal B}={\mathcal B}_0+{\mathcal B}_1,
\end{equation}
 where
$$
{\mathcal L}_0U=-\Delta U,\;\;{\mathcal B}_0U=\partial_\nu U-r^{-1}\chi U,
$$
\begin{eqnarray*}
&&{\mathcal L}_1U=-Y\Big(\frac{\xi'}{\xi}-\frac{\eta'}{\eta}\Big)\partial_Y
\Big(\partial_XU+Y\Big(\frac{\xi'}{\xi}-\frac{\eta'}{\eta}\Big)\partial_YU\Big)\\
&&-\Big(\partial_X+Y\Big(\frac{\xi'}{\xi}
-\frac{\eta'}{\eta}\Big)\partial_Y\Big)
Y\Big(\frac{\xi'}{\xi}-\frac{\eta'}{\eta}\Big)\partial_YU-\Big(\frac{\xi^2}{\eta^2}-1\Big)\partial_Y^2U+\sigma U
\end{eqnarray*}
and
\begin{eqnarray*}
&&{\mathcal B}_1U=\frac{(-\eta',1)}{\sqrt{1+\eta'^2}}\Big(Y\Big(\frac{\xi'}{\xi}-\frac{\eta'}{\eta}\Big)\partial_YU,
\Big(\frac{\xi}{\eta}-1\Big)\partial_YU\Big)\\
&&+\Big(\frac{(-\eta',1)}{\sqrt{1+\eta'^2}}-\frac{(-\xi',1)}{\sqrt{1+\xi'^2}}\Big)\big(\partial_XU,
\partial_YU\big)-r^{-1}(\rho-\chi)U
\end{eqnarray*}

We note that
\begin{equation}\label{Nov1ab}
\frac{\xi}{\eta}-1=O(x^{1+\alpha}),\;\;
\frac{\xi'}{\xi}-\frac{\eta'}{\eta}=O(x^{\alpha}), \;\;\frac{d}{dx}\Big(\frac{\xi'}{\xi}-\frac{\eta'}{\eta}\Big)=O(x^{\alpha-1})\;\;\mbox{near $x=0$}
\end{equation}
and
\begin{equation}\label{F16a}
\Big|\frac{\xi}{\eta}-1\Big|\leq c\delta^\alpha,\;\;\Big|\frac{\xi'}{\xi}-\frac{\eta'}{\eta}\Big|\leq c\delta^\alpha\;\;\mbox{on $[0,\Lambda/2]$}.
\end{equation}

In what follows it will be important for us that
\begin{equation}\label{F16aa}
|{\mathcal L}_1U|\leq c\Big(x^\alpha (|\partial_X^2U|+|\partial_Y^2U|+|\partial_{XY}^2U|)+x^{\alpha-1}(|\partial_XU|+|\partial_YU|)
+|U|\Big),
\end{equation}
\begin{equation}\label{F16ab}
|{\mathcal B}_1U|\leq c\Big(x^\alpha (|\partial_XU|+|\partial_YU|)+x^\alpha r^{-1}|U|\Big)
\end{equation}
for small $x$ and
\begin{equation}\label{F16ac}
|{\mathcal L}_1U|\leq c\delta^\alpha (|\partial_X^2U|+|\partial_Y^2U|+|\partial_{XY}^2U|)+C_\delta(|\nabla U|+|U|),
\end{equation}
\begin{equation}\label{F22ac}
|{\mathcal B}_1U|\leq c\delta^\alpha (|\nabla U|+R^{-1}|U|)
\end{equation}
on $[0,\Lambda/2]$.

Now the problem can be written as
\begin{eqnarray}\label{F15b}
&& ({\mathcal L}_0+{\mathcal L}_1)U+\tau^2U=0\;\;\mbox{in $\Omega_\xi$}\nonumber\\
&&({\mathcal B}_0+{\mathcal B}_1)U=0\;\;\mbox{on $S_\xi$}
\end{eqnarray}
\begin{equation}\label{F8aaa}
\partial_XU|_{X=0}=0,\;\;\partial_XU|_{X=\Lambda/2}=0\;\;\mbox{and}\;\;U|_{Y=0}=0.
\end{equation}

In forthcoming analysis we will consider the problem (\ref{F15b}), (\ref{F8aaa}) as a perturbation of the same problem with ${\mathcal L}_1=0$ and ${\mathcal B}_1=0$. It will be important for us to control dependence on $\delta$ and $\tau$ in constants in our consideration. We will indicate this dependence by putting indexes, for example $c_\delta$.

\subsection{Operator $({\mathcal L}_0, {\mathcal B}_0)$}\label{SM10a}

First let us consider the unperturbed problem
\begin{eqnarray}\label{F8a}
&& {\mathcal L}_0U+\tau^2U=F\;\;\mbox{in $\Omega_\xi$}\nonumber\\
&&{\mathcal B}_0U=G\;\;\mbox{on $S_\xi$}
\end{eqnarray}
\begin{equation}\label{F8aa}
U(X,0)=0,\;\;\partial_XU|_{X=0}=0,\;\;\partial_XU|_{X=\Lambda/2}=0.
\end{equation}

Define the spaces: $V^l_\beta(\Omega_\xi)$ consists of functions on $\Omega_\xi$ satisfying (\ref{F8aa}) and having the finite norm
$$
||v||_{V^l_\beta(\Omega_\xi)}=\Big(\sum_{j+k\leq 2}\int_{\Omega_\xi}R^{2(\beta+l-j-k)}|\partial_X^j\partial_Y^kv|^2dXdY\Big)^{1/2}
$$
and $L_\beta^2(\Omega_\xi)$ has the norm
$$
||f||_{L^2_\beta(\Omega_\xi)}=\Big(\int_{\Omega_\xi)}R^{2\beta}|f|^2dXdY\Big)^{1/2}.
$$
Let also $V^{1/2}_\beta(S_\xi)$ consists of traces on $S_\xi$ of functions from $V^1_\beta(\Omega_\xi)$. We parameterize $S_\xi$ by $Y=\xi(X)$, $X\in (0,\Lambda/2)$ and use the following norm there
\begin{equation}\label{F19a}
||g||_{V^{1/2}_\beta(S_\xi)}=\Big(\int_0^{\Lambda/2}\int_0^{\Lambda/2}\frac{|y^\beta g(y)-x^\beta g(x)|^2}{|y-x|^2}dx\Big)^{1/2}+||g||_{L^2_{\beta-1/2}(0,\Lambda/2)},
\end{equation}
where $g(x)$ is a function on the boundary $S_\xi =\{(x,\xi(x))\,:\,x\in (0,\Lambda/2)\}$.

In the forthcoming analysis an important role will play a certain splitting of the boundary value problem into a problem for an ODE on an interval and a boundary value problem which has a positivity property. Let us describe this splitting.

Multiplying the first equation in (\ref{F8a}) by $\phi_0$ and integrating over the interval $(0,\alpha^*)$ with respect to $\theta$, we get for $r<3\delta$ (here and in what follows we use the notation $r$ instead of $R$ in order to emphasize the similarity in equations below and in Sect.\ref{Sec11}.
\begin{equation}\label{F12c}
-\frac{1}{r}\partial_rr\partial_rh(r)-\frac{1}{r^2}\int_0^{\alpha^*}\partial_\theta^2U\phi_0d\theta+\tau^2h(r)=f(r),
\end{equation}
where
\begin{equation}\label{F13b}
h(r)=\int_0^{\alpha^*}U(r,\theta)\phi_0(\theta)d\theta,\;\;f(r)=\int_0^{\alpha^*}F(r,\theta)\phi_0(\theta)d\theta.
\end{equation}
Integrating by parts in the integral in (\ref{F12c}) and using boundary conditions for $U$ and $\phi_0$, we obtain
\begin{equation}\label{F12ca}
{\mathcal M}h(r)+\tau^2h(r)=f(r)+\frac{1}{r}G(r)\phi_0(\alpha^*)\;\;\mbox{for $r<3\delta$}.
\end{equation}
Representing the function $U$ in the form
\begin{equation}\label{F12cba}
U=h(r)\phi_0+W,\;\;\int_0^{\alpha^*}W\phi_0d\theta=0\;\;\mbox{for $r<\delta$},
\end{equation}
we have that
\begin{equation}\label{F12cb}
{\mathcal L}_0W+\tau^2W=F_1:=F-f(r)-\frac{1}{r}G(r)\phi_0(\alpha^*)
\end{equation}
and
\begin{equation}\label{F12cc}
{\mathcal B}_0W=G.
\end{equation}

Introduce the space $Y_0$ as the closure of functions $U$ satisfying (\ref{F8aa}) and
$$
\int_0^{\alpha^*}U\phi_0d\theta=0\;\;\mbox{for $r<\delta$}.
$$
in the norm
$$
||U||_{Y_0}=\Big(\int_{\Omega_\xi}(|\nabla U|^2+|U|^2)dXdY\Big)^{1/2}.
$$


We define the bilinear form on $Y_0$
$$
a(U_1,U_2)=\int_{\Omega_\xi}(\nabla U_1\cdot\nabla U_2+\tau^2U_1U_2)dXdY-\int_{S_\xi}r^{-1}\chi U_1U_2ds.
$$
Here the first integral is positive and the last one is negative. For every positive $\varepsilon$ we have
\begin{equation}\label{F24b}
\int_0^{\Lambda/2}U^2(x,\xi(x))dx\leq \int_{\Omega_\xi}(\varepsilon |U_Y|^2+\varepsilon^{-1} |U|^2)dXdY.
\end{equation}
Using this estimate, one can verify the following assertion
\begin{lemma}\label{LM7} For $U\in Y_0$ we have
\begin{equation}\label{F12cc}
a(U,U)\geq c\int_{\Omega_\xi}(|\nabla U|^2dXdY+(\tau^2-C_\delta)\int_{\Omega_\xi}|U|^2)dXdY.
\end{equation}
\end{lemma}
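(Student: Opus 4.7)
The plan is to establish coercivity of $a(U,U)$ by splitting the potentially negative boundary integral $-\int_{S_\xi} r^{-1}\chi U^2\,ds$ into a near-corner part and a far part, and absorbing each into the positive Dirichlet and $\tau^2$ terms using two different devices: the trace estimate (\ref{F24b}) far from the corner, and the orthogonality condition defining $Y_0$ near the corner.

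First I would fix a small $\delta_0 \in (0,\delta)$ and split the surface as $S_\xi = S^{\mathrm{far}} \cup S^{\mathrm{cor}}$ with $S^{\mathrm{cor}} = S_\xi \cap \{r<\delta_0\}$. On $S^{\mathrm{far}}$ the coefficient $r^{-1}\chi$ is uniformly bounded by some $M_\delta$, so (\ref{F24b}) with $\varepsilon$ chosen small, depending on $M_\delta$, gives
\[
\int_{S^{\mathrm{far}}} r^{-1}\chi U^2\,ds \leq \eta\,\|\nabla U\|_{L^2(\Omega_\xi)}^2 + C_{\eta,\delta}\|U\|_{L^2(\Omega_\xi)}^2
\]
for any preassigned $\eta>0$. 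This is the easy part: it introduces $\eta$ that we will choose strictly smaller than the gain obtained on the corner region.

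On $S^{\mathrm{cor}}$ I would switch to polar coordinates $(r,\theta)$ centred at $(0,\xi(0))$; the boundary is the straight segment $\theta=\alpha^*$ there, and $\chi = \rho_0/R$ with $R=r$ on this segment. The orthogonality $\int_0^{\alpha^*} U(r,\theta)\phi_0(\theta)\,d\theta = 0$ available for $r<\delta$ lets me invoke the min-max characterisation of the second eigenvalue of the Sturm--Liouville problem $-v''=\mu^2 v$, $v'(0)=0$, $v'(\alpha^*)=\rho_0 v(\alpha^*)$, whose first two eigenvalues are $-\kappa^2$ (with eigenfunction $\phi_0$) and $\mu_1^2$. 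This yields, for each $r<\delta_0$,
\[
\int_0^{\alpha^*}|\partial_\theta U(r,\cdot)|^2\,d\theta - \rho_0 U(r,\alpha^*)^2 \geq \mu_1^2 \int_0^{\alpha^*} U(r,\cdot)^2\,d\theta.
\]
Since $\mu_1 > 1$ by (\ref{K5y}), a short Rayleigh-quotient argument upgrades this to $\rho_0 U(r,\alpha^*)^2 \leq (1-c_0)\int_0^{\alpha^*}|\partial_\theta U|^2\,d\theta$ for some fixed $c_0 \in (0,1)$ depending only on $\mu_1$, $\rho_0$ and $\alpha^*$. Multiplying by the appropriate power of $r$ dictated by the form of $r^{-1}\chi$ and of $ds$ on the straight boundary, integrating over $r\in(0,\delta_0)$, and identifying the resulting integrand with a piece of $|\nabla U|^2$ via the polar Jacobian $dXdY = r\,dr\,d\theta$ produces
\[
\int_{S^{\mathrm{cor}}} r^{-1}\chi U^2\,ds \leq (1-c_0)\int_{\Omega_\xi\cap\{r<\delta_0\}} |\nabla U|^2\,dXdY.
\]

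Adding the two bounds and choosing $\eta < c_0$ gives $\int_{S_\xi} r^{-1}\chi U^2\,ds \leq (1-c_0+\eta)\|\nabla U\|^2_{L^2(\Omega_\xi)} + C_\delta \|U\|^2_{L^2(\Omega_\xi)}$, which rearranges into the claimed inequality with $c = c_0 - \eta > 0$. The main technical point I expect to have to treat carefully is the weight-matching step on $S^{\mathrm{cor}}$: reconciling the precise singular behaviour of $r^{-1}\chi$ with the weight naturally produced when integrating the slice-wise Sturm--Liouville inequality in $r$, so that the polar Jacobian returns exactly a piece of $|\nabla U|^2\,dXdY$ and no more singular expression. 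The strict inequality $\mu_1>1$ is exactly what furnishes the positive spare factor $c_0$ that beats the boundary integral by part of the Dirichlet energy.
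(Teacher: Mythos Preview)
Your approach is correct and in fact more complete than the paper's own treatment, which merely says ``using estimate (\ref{F24b}) one can verify'' the lemma without indicating how the singular weight $r^{-1}\chi$ near the corner is handled. You correctly identify that the trace inequality (\ref{F24b}) alone is insufficient there, and that the orthogonality to $\phi_0$ built into $Y_0$ is the missing ingredient, via the spectral gap for the transversal Sturm--Liouville problem. So the two approaches coincide on the far part of $S_\xi$; your near-corner argument is the natural way to make the paper's sketch rigorous.

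Two minor remarks. First, the gap $c_0$ in the slice estimate $\rho_0\,U(r,\alpha^*)^2 \le (1-c_0)\int_0^{\alpha^*}|\partial_\theta U|^2\,d\theta$ does not actually require $\mu_1>1$; it follows already from $\mu_1>0$. Expanding $U(r,\cdot)\in\phi_0^\perp$ in the eigenbasis $\{\phi_k\}_{k\ge1}$ one computes $b(v):=\int_0^{\alpha^*}|\partial_\theta v|^2\,d\theta-\rho_0 v(\alpha^*)^2=\sum_{k\ge1}\mu_k^2|c_k|^2$, and then Cauchy--Schwarz gives $v(\alpha^*)^2\le b(v)\sum_{k\ge1}\mu_k^{-2}\phi_k(\alpha^*)^2<\infty$, which is exactly $\rho_0 v(\alpha^*)^2\le (1-c_0)\int|\partial_\theta v|^2$ for an explicit $c_0\in(0,1)$. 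The hypothesis $\mu_1>1$ of (\ref{K5y}) is used elsewhere in the paper (for the weighted Fredholm theory with $\beta=0$), not in this lemma. Second, the weight-matching you flag as delicate is in fact clean: on the straight segment $\theta=\alpha^*$ one has $ds=dr$ and $r^{-1}\chi=\rho_0 r^{-1}$, while $|\nabla U|^2\,dXdY$ in polar coordinates contains the term $r^{-1}|\partial_\theta U|^2\,dr\,d\theta$, so multiplying your slice inequality by $r^{-1}$ and integrating in $r$ produces exactly the angular part of the Dirichlet energy over $\{r<\delta_0\}$ with no residual singular term.
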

This implies in particular that for $\tau^2\geq C_\delta +1$ the form $a$ is positive definite.

\bigskip

\begin{lemma}\label{LF8a}
Let  $\tau^2\geq C_\delta +1$, where $C_\delta$ is the constant in (\ref{F12cc}). Let also $F\in L^2(\Omega_\xi)$ and $G\in L^2(S_\xi)$.
Then the problem (\ref{F8a}), (\ref{F8aa}) has a unique weak solution $U\in Y_0$ and this solution satisfies the estimate
\begin{equation}\label{J26bxx}
\int_{\Omega_\xi}(|\nabla U|^2+\tau^2|U|^2)dXdY\leq c(\tau^{-2}||F||^2_{L^2(\Omega_\xi)}+\tau^{-1}||G||^2_{L^2(S_\xi}),
\end{equation}
where $c$ does not depend on $\tau$, $F$ and $G$.
\end{lemma}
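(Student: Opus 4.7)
The strategy is to apply Lax--Milgram on the Hilbert space $Y_0$ and then extract the $\tau$-dependent bound by careful absorption. First I would recast (\ref{F8a})--(\ref{F8aa}) as the variational problem: find $U \in Y_0$ such that
$$a(U, V) = \int_{\Omega_\xi} F V\,dXdY + \int_{S_\xi} G V\,ds \qquad \forall V \in Y_0.$$
The Dirichlet condition on $B$ is built into $Y_0$, the Neumann conditions at $X = 0, \Lambda/2$ are natural, and the Robin condition on $S_\xi$ enters through the term $-\int_{S_\xi} r^{-1}\chi UV\,ds$ already present in $a$. Continuity of $a$ and boundedness of the right-hand side as a functional on $Y_0$ are routine consequences of Cauchy--Schwarz and the trace inequality (\ref{F24b}). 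Coercivity is furnished by Lemma \ref{LM7}: enlarging $C_\delta$ if necessary so that $\tau^2 \geq 2C_\delta$, we get
$$a(U,U) \geq c_0\|\nabla U\|_{L^2(\Omega_\xi)}^2 + \tfrac{\tau^2}{2}\|U\|_{L^2(\Omega_\xi)}^2.$$
Lax--Milgram therefore produces a unique $U \in Y_0$.

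For the quantitative bound (\ref{J26bxx}), I would set $V = U$ in the variational identity and estimate the right-hand side with Young's inequalities tuned to the desired $\tau$-scaling. The interior term is handled by
$$\Bigl|\int FU\Bigr| \leq \frac{K_1}{2\tau^2}\|F\|_{L^2(\Omega_\xi)}^2 + \frac{\tau^2}{2K_1}\|U\|_{L^2(\Omega_\xi)}^2$$
for large $K_1$. For the boundary term, I would first apply (\ref{F24b}) with $\varepsilon = \tau^{-1}$ to obtain $\|U\|_{L^2(S_\xi)}^2 \leq \tau^{-1}(\|\nabla U\|^2 + \tau^2\|U\|^2)$, and then Young's inequality yields
$$\Bigl|\int_{S_\xi} GU\Bigr| \leq \frac{K_2}{2\tau}\|G\|_{L^2(S_\xi)}^2 + \frac{1}{2K_2}\bigl(\|\nabla U\|^2 + \tau^2\|U\|^2\bigr)$$
for large $K_2$. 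Choosing $K_1, K_2$ large enough that the $\|\nabla U\|^2$ and $\tau^2\|U\|^2$ contributions are absorbed into the coercivity lower bound delivers (\ref{J26bxx}).

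The chief obstacle is not existence but the sharp $\tau$-scaling. A naive trace estimate would leave $\|G\|_{L^2(S_\xi)}^2$ on the right, rather than the sought $\tau^{-1}\|G\|_{L^2(S_\xi)}^2$; the optimal prefactor is unlocked only by the specific choice $\varepsilon = \tau^{-1}$ in (\ref{F24b}), balancing its two summands. A subtler point is that Lemma \ref{LM7}, which already absorbs the singular Robin term $-\int r^{-1}\chi|U|^2\,ds$ into the interior gradient and mass, is what makes the whole argument work; thus the restriction to $Y_0$ (with its orthogonality condition near the corner) is essential rather than a convenience, and extracting it with the correct $\tau^2$-weight on $\|U\|^2$ (rather than merely $\tau^2 - C_\delta$) requires taking $\tau$ large compared with $C_\delta$, not just $\tau^2 \geq C_\delta + 1$ as the hypothesis suggests.
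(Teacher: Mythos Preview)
Your proposal is correct and follows essentially the same approach as the paper: derive the weak formulation $a(U,V)=\int_{\Omega_\xi}FV+\int_{S_\xi}GV$, invoke coercivity from Lemma~\ref{LM7} for existence and uniqueness, and then use the trace inequality~(\ref{F24b}) with the optimal choice $\varepsilon=\tau^{-1}$ to extract the sharp $\tau$-scaling in~(\ref{J26bxx}). The paper's proof is extremely terse (three sentences), and you have correctly supplied the details it omits.

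Your closing concern about the hypothesis is unwarranted, however. From $\tau^2\geq C_\delta+1$ one has $(\tau^2-C_\delta)/\tau^2\geq 1/(C_\delta+1)$, so the coercivity bound from Lemma~\ref{LM7} already yields $a(U,U)\geq c\|\nabla U\|^2+(C_\delta+1)^{-1}\tau^2\|U\|^2$. Since the constant $c$ in~(\ref{J26bxx}) is required to be independent only of $\tau$, $F$, $G$ (not of $\delta$), this $\delta$-dependent prefactor is harmless and the stated hypothesis suffices.
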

\begin{proof}  Multiplying the first equation in (\ref{F8a}) by $V\in Y_0$ and integrating over $\Omega_\xi$ we get
$$
a(U,V)=\int_{\Omega_\xi}FVdXdY+\int_{S_\xi}GVds\;\;\mbox{for all $V\in Y_0$},
$$
which represents a weak formulation of the problem (\ref{F8a}), (\ref{F8aa}) in $Y_0$.
Since the form $a$ is positive definite for large $\tau$ the above weak formulation has a unique solution $U\in Y_0$.
 Using the estimate (\ref{F24b}) one can show that $U$ satisfies (\ref{J26bxx}).
\end{proof}

The space $Y$ consists of functions
$$
U=b\zeta(r-\delta)e^{-\tau(r-\delta)}\phi_0+W,\;\;\mbox{$b$ is a constant and $V\in Y_0$},
$$
where $\zeta(t)$ is a cut-off function equal $1$ for $t<\delta/2$ and $0$ for $t>\delta$). The function
$$
h(r)=\int_0^{\alpha^*}U\phi_0d\theta
$$
is well defined for $r\in [\delta,3\delta)$.
\begin{lemma}\label{LF13a} There exists $C_\delta$ such that if $\tau\geq C_\delta$ then the problem
\begin{equation}\label{F13a}
a(U,V)=0\;\;\mbox{for all $V\in Y_0$}
\end{equation}
has a unique solution in $Y$ satisfying $h(\delta)=b$. Moreover,
\begin{equation}\label{F13aa}
U=b\big(\phi_0(\theta)\zeta(r)\frac{K_{i\kappa}(\tau r)}{K_{i\kappa}(\tau\delta )}+W\big),
\end{equation}
where the function $W\in Y_0$ satisfies the estimate
\begin{equation}\label{F13ab}
||W||_{Y_0}\leq c|b|e^{-\tau\delta}
\end{equation}
\end{lemma}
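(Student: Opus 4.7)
The plan is to combine a Lax--Milgram existence argument on the closed subspace $Y_0$ with a targeted pivot built from the explicit radial solution $K_{i\kappa}(\tau\,\cdot)$. The key observation is that by (\ref{K49zq}), the angular mode $\phi_0$ was constructed precisely so that $\phi_0(\theta)K_{i\kappa}(\tau r)$ satisfies ${\mathcal L}_0 u+\tau^2 u=0$ and ${\mathcal B}_0 u=0$ in the model corner by separation of variables, with $K_{i\kappa}(\tau r)$ solving the radial equation $\mathcal M h=-\tau^2 h$ from Section~\ref{Shalfl}.

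First, fix a smooth cut-off $\zeta$ with $\zeta(\delta)=1$, and set
\[
U_0:=b\,\phi_0(\theta)\,\zeta(r)\,\frac{K_{i\kappa}(\tau r)}{K_{i\kappa}(\tau\delta)}.
\]
Then $h_{U_0}(\delta)=b$ because $\phi_0$ is $L^2$-normalized, and the residual $({\mathcal L}_0+\tau^2)U_0$ together with ${\mathcal B}_0 U_0$ reduce to commutator terms in $\nabla\zeta,\Delta\zeta$, supported only on the transition region of $\zeta$. Writing $U=U_0+bW$ with $W\in Y_0$, the original weak problem reduces to
\[
a(bW,V)=-a(U_0,V)\qquad\text{for all }V\in Y_0.
\]
By Lemma~\ref{LM7}, for $\tau^2\geq C_\delta+1$ the form $a$ is coercive on $Y_0$ with constant of order $\tau^2$, so Lax--Milgram produces a unique $W\in Y_0$. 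Since elements of $Y_0$ have vanishing $\phi_0$-mode for $r<\delta$ and hence at $r=\delta$ by continuity, the total solution satisfies $h_U(\delta)=h_{U_0}(\delta)=b$. Uniqueness follows because the difference of two solutions would be an element of $Y_0$ annihilating $a(\cdot,V)$ for all $V\in Y_0$, hence zero by coercivity.

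Third, to obtain the exponential bound $\|W\|_{Y_0}\leq c|b|e^{-\tau\delta}$, I would position the transition region of $\zeta$ in $r\in[\delta+c_0,2\delta]$ with $c_0$ proportional to $\delta$, and use the asymptotic (\ref{F28aa}) to conclude
\[
\Big|\frac{K_{i\kappa}(\tau r)}{K_{i\kappa}(\tau\delta)}\Big|\lesssim\sqrt{\delta/r}\,e^{-\tau(r-\delta)}\leq e^{-c_0\tau}
\]
on the support of $\nabla\zeta$, while the derivative picks up an additional factor of $\tau$. Thus the commutator residual has $L^2$-norm $O(|b|\sqrt{\tau}\,e^{-c_0\tau})$, and dividing by the $\tau^2$ coercivity constant yields an $Y_0$-bound $O(|b|\tau^{-3/2}e^{-c_0\tau})$, which absorbs into the claimed $c|b|e^{-\tau\delta}$.

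The main obstacle is verifying that the $K_{i\kappa}$-based pivot $U_0$ is consistent with the exponential-based representation $b\phi_0\zeta(r-\delta)e^{-\tau(r-\delta)}+W'$ that defines the space $Y$, i.e., that both parametrize the same one-dimensional complement to $Y_0$ up to an exponentially small $Y_0$-correction. This reduces to the identity $K_{i\kappa}(\tau r)/K_{i\kappa}(\tau\delta)=e^{-\tau(r-\delta)}(1+O((\tau r)^{-1}))$ (valid for $\tau r$ large) on the annular region where both cut-offs are active, showing that the difference between the two pivots is an $H^1$-function whose $Y_0$-norm is $O(|b|e^{-\tau\delta})$ and can therefore be absorbed into $W$, reconciling the two representations at precisely the stated accuracy.
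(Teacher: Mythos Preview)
Your approach is essentially the paper's: both plug the ansatz (\ref{F13aa}) into (\ref{F13a}), reduce to $a(W,V)=\mathcal{F}(V)$ on $Y_0$, invoke coercivity (the paper via Lemma~\ref{LF8a}, you via Lax--Milgram built on Lemma~\ref{LM7}), and estimate the commutator residual $(-\zeta''K_{i\kappa}-2\zeta'\partial_rK_{i\kappa})/K_{i\kappa}(\tau\delta)$ through the large-argument asymptotics (\ref{F28aa}). Your closing paragraph on reconciling the $K_{i\kappa}$-based pivot with the exponential pivot that defines $Y$ is a valid point the paper leaves implicit, and your treatment of it is correct; the only cosmetic slip is that the coercivity constant of $a$ on $Y_0$ in the $Y_0$-norm is of order $1$ rather than $\tau^2$, and to recover the exact rate $e^{-\tau\delta}$ the transition of $\zeta$ should sit at $r\geq 2\delta$, both harmless since the exponential absorbs any polynomial loss.
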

\begin{proof} Inserting (\ref{F13aa}) into (\ref{F13a}), we obtain the relation
$$
a(W,V)={\mathcal F}(V)\;\;\mbox{for all $V\in Y_0$},
$$
where
$$
F(V)=\frac{b}{K_{i\kappa}(\tau\delta )}\int_0^{2\delta}\int_0^{\alpha^*}(-\zeta^{''}K_{i\kappa}-2\zeta'\partial_rK_{i\kappa})\phi_0VdXdY.
$$
Using the asymptotics of $K_{i\kappa}(z)$ for large $z$ and Lemma \ref{LF8a}, we obtain
$$
|F(V)|\leq c|b|e^{-\tau\delta}||V||_{Y_0},
$$
which leads to (\ref{F13ab}).
\end{proof}

\begin{remark}\label{RF14a}By Lemma (\ref{LF13a}) we can evaluate the normal derivative of the function (\ref{F13b}) at $r=\delta$:
\begin{equation}\label{F13ba}
h'(r)=(-\tau +\alpha(\tau^{-1}))h(\delta)\;\;\mbox{for large $\tau$},
\end{equation}
where $\alpha(z)$ is a $C^\infty$ function in a neighborhood of the origin. If we consider the function $h$ on the interval $(0,\delta)$ then it must satisfy the equation
\begin{equation}\label{F14bb}
{\mathcal M}h(r)+\tau^2h(r)=0\;\;\mbox{for $r\in (0,\delta)$}
\end{equation}
and  the boundary condition(\ref{F13ba}). This allows us to split solutions of (\ref{F8a}), (\ref{F8aa}) with $F=0$ and $G=0$  in to two parts. The first one ($h$-component of $U$) solves the problem (\ref{F14bb}), (\ref{F13ba}) on the interval $(0,\delta)$ and the second one ($W$-component of $U$) belongs to $Y$, solves the problem (\ref{F13a}) and satisfies
$$
\int_0^{\alpha^*}U\phi_0d\theta=h(\delta)\;\;\mbox{for $R=\delta$}.
$$
\end{remark}

\bigskip
Let us take an arbitrary function $U\in V^2_\beta(\Omega_\xi)$, $\beta\in (1,1+\mu_1)$, satisfying the problem (\ref{F8a}), (\ref{F8aa}) with $F=0$ and $G=0$. Let us show that such solutions can be parameterized by the constant $h(\delta)$ for large $\tau$, where $h$ is defined by (\ref{F13b}). Indeed, if $h(\delta)=0$ then by Lemma \ref{LM7} the $W$ component of $U$ vanishes and for $h$ component of $U$ we obtain the Cauchy problem for the operator ${\mathcal M}+\tau^2$. Therefore $h=0$. To show the existence we start from the $h$ component of $U$ and we borrow  it from Sect.\ref{SecM7}. Let $h(\delta)=b$. We take
$$
h(r)=C_1\Phi(r),\;\;\Phi(r)=(K_{i\kappa}(\tau r)+Q(\tau)I_{i\kappa}(\tau r)),\;\
$$
By (\ref{F28aa}) and (\ref{F15a})
\begin{equation}\label{M7ab}
\Phi(\delta)=e^{-\tau\delta}\Big(\frac{\pi}{2\tau\delta}\Big)^{1/2}\Big(1+O(\frac{1}{\tau}\Big)\neq 0
\end{equation}
for large $\tau$ and we choose
$$
C_1=b/\Phi(\delta).
$$
Now solving the problem (\ref{F13a}) with $h(\delta)=b$ we can find the $W$-component of $U$. Since the $h$ component of $U$ and the $h$ component of $W$ have the same Dirichlet and Robin boundary condition $h$ is $C^\infty$ in a neighborhood of $r=\delta$.

We shall denote the solution of (\ref{F8a}), (\ref{F8aa}) with $h$-component
\begin{equation}\label{M7a}
h(r)=(K_{i\kappa}(\tau r)+Q(\tau)I_{i\kappa}(\tau r)),
\end{equation}
by $\widehat{U}_\tau$ and note that by (\ref{F13ab}) and (\ref{M7ab}) its $W$ component satisfies
\begin{equation}\label{M7aa}
||W_\tau||_{Y_0}\leq c_\delta e^{-2\tau\delta}\frac{1}{\tau^{1/2}}.
\end{equation}
Moreover, by (\ref{M7ac})
\begin{equation}\label{M7ad}
K_{i\kappa}(\tau r)-Q(\tau) I_{i\kappa}(\tau r)=-\Big(\frac{\pi}{\kappa\sinh(\pi\kappa)}\Big)^{1/2}\sqrt{1+A(\tau)^2}\sin(\kappa\log(\frac{1}{2}\tau)-\gamma_\kappa+\psi(\tau))
\end{equation}
where $\psi(\tau)$ is given by (\ref{F15ab}) and $A(\tau)$ by (\ref{F15aa}).

\subsection{Spectral problem for the unperturbed operator}\label{SM7aa}

Let us consider the problem
\begin{eqnarray}\label{Okt30a}
&&{\mathcal L}_0U+\tau^2 U\;\;\mbox{in $\Omega_\xi$}\nonumber\\
&&{\mathcal B}_0U=0\;\;\mbox{on $S_\xi$}
\end{eqnarray}
and
\begin{equation}\label{F14a}
U(X,0)=0,\;\;\partial_XU|_{X=0}=0,\;\;\partial_XU|_{X=\Lambda/2}=0.
\end{equation}

 We choose a smooth cut-off function $\zeta(r)$ which is equal to $1$ for $r<\delta/3$ and $0$ for $r>\delta/2$

To describe a self-adjoint operator associated with ${\mathcal L}_0$ we introduce the space
$$
{\mathcal D}_\gamma=\{U=C\zeta(\tau R)\sin(\kappa\log\frac{1}{2}R+\gamma)\cosh(\kappa\theta)+V:\,V\in V_0^{2}(\Omega_\xi),\;\mbox{$V$ subject to (\ref{F14a}) and ${\mathcal B}_0U=0$}\}.
$$

Similar to Theorem \ref{P22aa} one can show that the operator ${\mathcal L}_0$ with the domain ${\mathcal D}_\gamma$ is self-adjoint.


\begin{theorem}\label{TF9a} There exists an integer $k_\delta$ depending on $\delta$ such that the spectrum the operator ${\mathcal L}_0$ with the domain ${\mathcal D}_\gamma$ in $\tau\geq\widehat{\tau}_{k_\delta}$ consists of eigenvalues $\widehat{\tau}_k$, $k\geq k_\delta$, where $\widehat{\tau}_k$ is given by (\ref{Okt18c}). If we denote by $\widehat{U}_k$ the corresponding eigenfunction then
  the corresponding $h$-component of  $\widehat{U}_k$ is equal to the function $\Phi_k$ given by (\ref{Okt18cz}) and the $W$-component of  $\widehat{U}_k$ admits the estimate
\begin{equation}\label{F14bd}
||W||_{Y_0}\leq ce^{-2\delta \tau}.
\end{equation}
\end{theorem}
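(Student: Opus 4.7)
The plan is to use the splitting from Remark \ref{RF14a} to reduce the two-dimensional eigenvalue problem in $\Omega_\xi$ to the one-dimensional spectral problem on $(0,\delta)$ analyzed in Section \ref{SecM7}, then recover the $W$-component by applying Lemma \ref{LF13a}. Concretely, I would proceed in two directions: first show that every eigenvalue of $\mathcal{L}_0$ on $\mathcal{D}_\gamma$ with $\tau$ large enough must equal some $\widehat{\tau}_k$, and then construct the eigenfunctions explicitly and verify the decay estimate on their $W$-component.

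For the necessity direction, let $U \in \mathcal{D}_\gamma$ satisfy $(\mathcal{L}_0 + \tau^2) U = 0$ with the homogeneous boundary conditions and introduce $h(r) = \int_0^{\alpha^*} U(r,\theta)\phi_0(\theta)\, d\theta$ on the corner neighborhood $r < 3\delta$. The calculation that led to \eqref{F12ca} shows $\mathcal{M}h + \tau^2 h = 0$ on $(0,\delta)$, and the membership $U \in \mathcal{D}_\gamma$ forces the local asymptotic form $h = C\sin(\kappa \log \tfrac{1}{2}r + \gamma) + O(r)$ as $r \to 0^+$, i.e.\ $h$ lies in the domain $\widehat{D}_\gamma(\tau)$ of Section \ref{SecM7}. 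The matching condition at $r=\delta$ is supplied by Remark \ref{RF14a}: because the $W$-component of $U$ is determined by $b=h(\delta)$ through Lemma \ref{LF13a}, the derivative $h'(\delta)$ is forced to satisfy \eqref{F13ba}, which is precisely the Robin condition \eqref{Okt26av}. Hence $h$ is an eigenfunction of the one-dimensional operator $\mathcal{M}(\tau)$, and the Proposition at the end of Section \ref{SecM7} gives $\tau = \widehat{\tau}_k$ for some $k \geq k_\delta$ together with $h = C_1 \Phi_k$.

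For the sufficiency direction and the construction of $\widehat{U}_k$, I would fix $\tau = \widehat{\tau}_k$ and set $h = \Phi_k$ on $(0,\delta)$. Then put $b = \Phi_k(\delta)$ and invoke Lemma \ref{LF13a} to obtain $W \in Y$ with $\|W\|_{Y_0} \leq c|b|e^{-\tau\delta}$ such that $\int_0^{\alpha^*} W \phi_0 \, d\theta |_{r=\delta} = b$. The asymptotic estimate $\Phi_k(\delta) = O(\tau^{-1/2} e^{-\tau \delta})$ recorded at the end of Section \ref{SecM7} then yields $\|W\|_{Y_0} \leq c e^{-2\tau \delta}$, which is exactly \eqref{F14bd}. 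The resulting function $\widehat{U}_k$ agrees with $\Phi_k \phi_0$ near the corner (hence belongs to $\mathcal{D}_\gamma$) and smoothly glues to the outer $W$-part thanks to the matching of Dirichlet and Neumann data at $r=\delta$ observed in Remark \ref{RF14a}.

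The main obstacle I expect is verifying the matching at $r = \delta$ carefully enough to ensure both that the constructed $\widehat{U}_k$ genuinely lies in $\mathcal{D}_\gamma$ (correct corner asymptotics plus satisfaction of $\mathcal{B}_0 \widehat{U}_k = 0$) and that, in the necessity direction, the $W$-component produced by any eigenfunction is uniquely captured by Lemma \ref{LF13a} so that the one-dimensional Robin condition \eqref{F13ba} is actually forced rather than merely consistent. This requires the coercivity estimate in Lemma \ref{LM7} (i.e.\ $\tau^2 \geq C_\delta + 1$), which fixes the threshold $k_\delta$. Simplicity and exhaustiveness of the eigenvalues then follow from the simplicity part of the Proposition in Section \ref{SecM7} combined with the uniqueness in Lemma \ref{LF13a}.
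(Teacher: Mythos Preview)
Your proposal is correct and follows essentially the same route as the paper: the paper's proof is a one-line reference to the decomposition of solutions of \eqref{F8a}, \eqref{F8aa} with $F=0$, $G=0$ developed at the end of Section~\ref{SM10a} (just after Remark~\ref{RF14a}), and your argument simply makes that decomposition explicit in both directions. In particular, your derivation of \eqref{F14bd} from Lemma~\ref{LF13a} together with $\Phi_k(\delta)=O(\tau^{-1/2}e^{-\tau\delta})$ is exactly the computation recorded as \eqref{M7aa} in the paper.
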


\begin{proof} The proof follows from the decomposition of solutions to the  problem (\ref{SM10a}) with $F=0$ and $G=0$
given at the end of Sect. \ref{SM10a} (just after Remark \ref{RF14a}.

\end{proof}


The function $U$ with $h$ component $h_\tau(r)=K_{i\kappa}(\tau r)+Q(\tau)I_{i\kappa}(\tau r)$
is well defined for large $\tau$ but it is not necessary has the right asymptotics at zero. We denote this function by $\widehat{U}_\tau$. Then the corresponding $W$-component of $\widehat{U}_\tau$, which will be denoted by $\widehat{V}_\tau$ satisfies (\ref{F14bd}) still.

Denote
\begin{equation}\label{M3b}
d^2_k=\widehat{\tau}_k^2 ||\widehat{U}_k||^2_{L^2(\Omega_\xi)}=\int_0^\infty |K_{i\kappa}(r)|^2rdr+O(e^{-\tau\delta})
\end{equation}
and let
\begin{equation}\label{M3ba}
{\bf U}_k=\widehat{\tau}_kd_k^{-1}\widehat{U}_k.
\end{equation}
Clearly the $L^2$-norm of these eigenfunctions is equal to $1$. Using relations (\ref{M3c})-(\ref{M3cb}), we get
\begin{equation}\label{M3c1}
\int_{\Omega_\xi}r^{2\beta}|{\bf U}_k|^2dXdY\approx \widehat{\tau}_k^{-2\beta}
\;\;\mbox{for $\beta>-1$},
\end{equation}
\begin{equation}\label{M3ca1}
\int_{\Omega_\xi}r^{2\beta}|\nabla{\bf U}_k|^2dXdY\approx \widehat{\tau}_k^{2-2\beta}\;\;\mbox{for $\beta>0$}
\end{equation}
and
\begin{equation}\label{M3cb1}
\int_{\Omega_\xi}r^{2\beta}|\nabla^2{\bf U}_k|^2dXdY\approx \widehat{\tau}_k^{4-2\beta}\;\;\mbox{for $\beta>1$}.
\end{equation}


\subsection{Some estimates}\label{SKKa}

In this section we consider the non-homogeneous problem (\ref{F8a}), (\ref{F8aa}).

\begin{lemma}\label{TF9aa} Let $q_0\in (0,q)$. Assume that $F\in L^2(\Omega_\xi)$ and $G\in V^{1/2}(S_\xi)$ and that $\tau\in[q_0^{-1}\widehat{\tau}_{k},q_0\widehat{\tau}_{k}]$ and $\tau\neq \widehat{\tau}_{k}$, where $k\geq k_\delta$ and $k_\delta$ is an integer depending on $\delta$.
Then there exists a unique solution of  the problem {\rm (\ref{F8a})}, {\rm (\ref{F8aa})}
\begin{equation}\label{F14az}
U=C\zeta(\tau R)\sin(\kappa\log \frac{1}{2}R+\gamma)\phi_0+V,\;\;V\in V^2_\beta (\Omega_\xi).
\end{equation}
Moreover this solution satisfies
\begin{eqnarray}\label{F9bx}
&&|C|^2+\tau^{-2}||V||^2_{V^2_0(\Omega_\xi)}+\tau^2||V||^2_{L^2(\Omega_\xi)}\nonumber\\
&&\leq \frac{c}{|\tau-\widehat{\tau}_k|^2}(||F||^2_{L^2(\Omega_\xi)}
+||G||^2_{V^{1/2}(S_\xi)}+\tau ||G||^2_{L^2(S_\xi)})
\end{eqnarray}
and
\begin{equation}\label{F9bax}
c_0C\sin(\kappa\log(\tau/\widehat{\tau}_k)+\psi(\tau)-\psi(\widehat{\tau}_k))=\int_{\Omega_\xi}\widehat{U}_\tau FdXdY+\int_{S_\xi}\widehat{U}_\tau Gds,
\end{equation}
where the function $\widehat{U}_\tau$ is introduced at the end of the previous section.
\end{lemma}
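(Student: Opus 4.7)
My plan is to adapt the half-line argument of Lemma~\ref{LF20a} to the bounded 2D domain $\Omega_\xi$, using $\widehat{U}_\tau$ in the role that $K_{i\kappa}(\tau r)$ played there. Existence and uniqueness come first: the self-adjoint extension $\mathcal{L}_0$ on $\mathcal{D}_\gamma$, built exactly as in Proposition~\ref{P22aa} applied to the unperturbed coefficients, is Fredholm of index zero, and by Theorem~\ref{TF9a} its only eigenvalue in the window $[q_0^{-1}\widehat{\tau}_k,q_0\widehat{\tau}_k]^2$ is $-\widehat{\tau}_k^2$. Since $\tau\neq\widehat{\tau}_k$, the operator $\mathcal{L}_0+\tau^2I$ is a bijection $\mathcal{D}_\gamma\to L^2(\Omega_\xi)$. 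Lifting $G$ by any $U_0$ satisfying $\mathcal{B}_0U_0=G$ and (\ref{F8aa}) reduces the problem to the case $G=0$ and produces a unique $U$ of the form (\ref{F14az}).

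For the formula (\ref{F9bax}) I would apply Green's identity to $U$ and $\widehat{U}_\tau$ on $\Omega_\xi\setminus B_\varepsilon$, where $B_\varepsilon$ is a small disc around $(0,\eta(0))$. The matching Robin condition $\mathcal{B}_0\widehat{U}_\tau=0$ together with the Dirichlet/Neumann conditions collapse all boundary contributions except the one on $\partial B_\varepsilon\cap\Omega_\xi$ to $\int_{\Omega_\xi}\widehat{U}_\tau F\,dX\,dY+\int_{S_\xi}\widehat{U}_\tau G\,ds$. The remaining arc integral
\[
I_\varepsilon=\int_0^{\alpha^*}\bigl(\widehat{U}_\tau\,\partial_r U-U\,\partial_r\widehat{U}_\tau\bigr)\Big|_{r=\varepsilon}\,\varepsilon\,d\theta
\]
is computed as $\varepsilon\to0$. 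The $V^2_0$-remainder $V$ of $U$ decays as $o(r^2)$ at the corner, and the $Y_0$-remainder of $\widehat{U}_\tau$ is controlled by (\ref{M7aa}), so both contribute $o(1)$. The singular parts $C\sin(\kappa\log\tfrac{1}{2}r+\gamma)\phi_0$ and, from (\ref{F23a})--(\ref{F23aa}) and (\ref{M7ac}), $c_1\sqrt{1+A(\tau)^2}\sin(\kappa\log\tfrac{1}{2}\tau r-\gamma_\kappa-\psi(\tau))\phi_0$ give, via the Wronskian identity for two sines and the normalization $\int_0^{\alpha^*}\phi_0^2\,d\theta=1$, a finite limit proportional to $C\sin(\gamma+\gamma_\kappa-\kappa\log\tfrac{1}{2}\tau-\psi(\tau))$. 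The eigenvalue identity (\ref{Okt18b}) rewrites $\gamma+\gamma_\kappa-\kappa\log\tfrac{1}{2}\widehat{\tau}_k=-\psi(\widehat{\tau}_k)+k\pi$, which collapses the phase to $\kappa\log(\tau/\widehat{\tau}_k)+\psi(\tau)-\psi(\widehat{\tau}_k)$ modulo $\pi$, yielding (\ref{F9bax}).

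For the estimate I would use the splitting $U=h(r)\phi_0+W$ with $\int_0^{\alpha^*}W\phi_0\,d\theta=0$ near the corner. The projection $h$ satisfies the 1D equation (\ref{F12ca}) on $(0,\delta)$ together with the Robin-type condition of Remark~\ref{RF14a} at $r=\delta$. Lemma~\ref{LF1} applied with $\beta=0$ then gives, via (\ref{F1ba}), the bound $|C|\le c|\tau-\widehat{\tau}_k|^{-1}(\|F\|_{L^2}+\|G\|_{V^{1/2}}+\sqrt{\tau}\|G\|_{L^2})$ together with the corresponding weighted control of the remainder in $h$. The orthogonal part $W$ satisfies (\ref{F12cb})--(\ref{F12cc}) with reduced right-hand sides; since the form $a$ is positive definite by Lemma~\ref{LM7}, Lemma~\ref{LF8a} provides the full $V^2_0(\Omega_\xi)$ and $L^2$ control of $W$ by $\tau^{-1}\|F\|_{L^2}+\|G\|_{V^{1/2}}+\sqrt{\tau}\|G\|_{L^2}$. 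Combining the $h$ and $W$ bounds with local interior regularity away from the corner yields (\ref{F9bx}).

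The main obstacle will be the clean passage to the limit in Green's identity: one must verify that the $V^2_0$-part of $U$ multiplied against the oscillatory $O(1/r)$ radial derivative of $\widehat{U}_\tau$, and symmetrically the $Y_0$-part of $\widehat{U}_\tau$ against the logarithmic derivative of the singular part of $U$, have vanishing contributions to $I_\varepsilon$. This requires trace estimates for the $V^2_0$ and $Y_0$ scales on the shrinking arc together with the exponential decay (\ref{M7aa}), and careful bookkeeping of the sign of $Q(\tau)$ and of the constant $c_0$ that emerges from the Wronskian.
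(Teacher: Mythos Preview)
Your derivation of \eqref{F9bax} via Green's identity on $\Omega_\xi\setminus B_\varepsilon$ and the Wronskian of the two sines is exactly what the paper does, and your existence/uniqueness argument via the self-adjoint extension is likewise the intended one.

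For the estimate \eqref{F9bx}, however, your route diverges from the paper's and carries a gap. The paper does \emph{not} try to close a 1D estimate for $h$ on $(0,\delta)$. Instead it reads off $|C|$ directly from \eqref{F9bax}, gets $\|V\|_{L^2}$ from the self-adjoint resolvent bound (when $G=0$), then writes the equation \eqref{F19ac} for $V$ and localizes with cut-offs $\zeta_1,\zeta_2$: the piece $\zeta_1V$ is transplanted to the infinite angle $A$ and handled by Lemma~\ref{LF20a}, while $\zeta_2V$ is controlled by positivity of the form $a$ (Lemma~\ref{LM7}) plus standard local regularity. This is done first for a fixed ``safe'' value $\tau_*=\tfrac12 q\tau_k$ away from the spectrum, and general $\tau$ is then reached by the perturbation equation \eqref{F19acaz} for $W=V_\tau-V_{\tau_*}$.

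The specific flaw in your plan is the claim that $h$ satisfies the Robin condition of Remark~\ref{RF14a} at $r=\delta$. That relation \eqref{F13ba} is the 1D Dirichlet--Neumann map coming from Lemma~\ref{LF13a}, and it is established only for the \emph{homogeneous} problem ($F=0$, $G=0$); for general $F,G$ the coupling between $h$ and the $Y$-component carries an extra inhomogeneous term, so you cannot invoke it to decouple $h$ on $(0,\delta)$. In addition, Lemma~\ref{LF1} is stated on the half-line with the eigenvalues $\tau_k$, not on the interval with $\widehat{\tau}_k$; you would need to prove an interval analogue, which the paper never states. Since you already have $|C|$ from \eqref{F9bax}, the simplest repair is to abandon the Robin reduction and instead follow the paper's localization-plus-perturbation scheme for the $V^2_0$ and $L^2$ bounds on $V$.
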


\begin{proof}
First let us prove (\ref{F9bax}). For small $\epsilon$ we intruduce $\Omega^\epsilon=\Omega_\xi\setminus B_\epsilon$ and
$S^\epsilon=\S_\xi\setminus V_\epsilon$, where $B_\epsilon$ is the disc of radius $\epsilon$ with the center at $(0,\xi(0))$. Then using Green's formula we get
\begin{eqnarray*}
&&\int_{\Omega^\epsilon}F\Phi_\tau dXdY+\int_{S^\epsilon}G\Phi_\tau ds
=C\Big(\partial_r(K_{i\kappa}(\tau r)+Q(\tau)I_{i\kappa}(\tau r))\sin(\kappa\log(r/2)+\gamma)\\
&&-(K_{i\kappa}(\tau r)+Q(\tau)I_{i\kappa}(\tau r))\partial_r\sin(\kappa\log(r/2)+\gamma)\Big)|_{R=\epsilon}.
\end{eqnarray*}
Using asymptotics (\ref{F23a}) and (\ref{F23aa}), we get
$$
\int_{\Omega_\xi}F\Phi_\tau dXdY+\int_{S_\xi}G\Phi_\tau ds=Cc_0\sin(\kappa\log\tau-\gamma_\kappa-\gamma+\psi(\tau)).
$$
By (\ref{Okt18b})
$$
\kappa\log\widehat{\tau}_k-\gamma_\kappa-\gamma+\psi(\widehat{\tau}_k)=k\pi,
$$
and we arrive at (\ref{F9bax}).

From (\ref{F9bax}) it follows that
\begin{equation}\label{F19aa}
|C|\leq \frac{c}{|\tau-\widehat{\tau}_k|}(||F||_{L^2(\Omega_\xi)}+\tau^{1/2}||G||_{L^2(S_\xi)}).
\end{equation}

Since the operator ${\mathcal L}_0$ is self-adjoint we have
$$
||U||_{L^2(\Omega_\xi)}\leq \frac{c}{\tau|\tau-\widehat{\tau}_k|}||F||_{L^2(\Omega_\xi)},\;\;\mbox{if $G=0$}.
$$
and due to (\ref{F14az}) we get
\begin{equation}\label{F19ab}
||V||_{L^2(\Omega_\xi)}\leq \frac{c}{\tau|\tau-\widehat{\tau}_k|}||F||_{L^2(\Omega_\xi)}\;\;\mbox{in the case $G=0$}.
\end{equation}

By (\ref{F14az}) we can write the equation for $V$ as
\begin{eqnarray}\label{F19ac}
&&{\mathcal L}_0V+\tau^2 V=f:=F-C[{\mathcal L}_0,\zeta(\tau R)\sin(\kappa\log \frac{1}{2}R+\gamma)]-C\tau^2\zeta(\tau R)\sin(\kappa\log\frac{1}{2}R+\gamma)\;\;\mbox{in $\Omega_\xi$}\nonumber\\
&&{\mathcal B}_0V=g=G-C[{\mathcal B}_0,\zeta(\tau R)\sin(\kappa\log\frac{1}{2}R+\gamma)]\;\;\mbox{on $S_\xi$}\nonumber\\
&&V(X,0)=0,\;\;V|_{x=0}=0,\;\;V|_{x=\Lambda/2}=0.
\end{eqnarray}
We take first $\tau=\tau_*$ in (\ref{F19ac}), where $\tau_*=\frac{1}{2}q\tau_k$, where $\tau_k$ was introduced in Theorem \ref{TM10}. We solve the above problem first for this value of $\tau$ and denote corresponding solution by $V=V_\tau$.

Let $\zeta_1(r)$ be a cut-off function $\zeta_1(r)=1$ for $r<\delta$ and $\zeta_1(r)=0$ for $r>2\delta$. We write
$$
V=V_1+V_2,\;\;V_1=\zeta_1V,\;V_2=\zeta_2V,\;\;\mbox{where $\zeta_2=(1-\zeta_1)$}.
$$
Then we get
\begin{eqnarray}\label{F19aca}
&&{\mathcal L}_0V_j+\tau^2 V_j=f_j\;\;\mbox{in $A$}\nonumber\\
&&{\mathcal B}_0V_j=g_j\;\;\mbox{on $(0,\infty)$}\nonumber\\
&&V(X,0)=0,\;\;\partial_XV|_{X=0}=0,\;\;\partial_XV|_{X=\Lambda/2}=0.
\end{eqnarray}
where
$$
f_j=\zeta_jf+2\partial_r(V\partial_r\zeta_j)-V\partial_r^2\zeta_j,\;\;g_j=\zeta_jg,\;\;j=1,2.
$$
Applying Proposition \ref{LF20a} to (\ref{F19aca}) with $j=1$, we obtain
\begin{equation}\label{F24ba}
||\zeta_1V||_{V^2(A)}+\tau^2||\zeta_1V||_{L^2(A)}\leq c(||\zeta_1f||_{L^2(A_{4\delta})}+||\zeta_1g||_{V^{1/2}}+\tau^{1/2}||\zeta_1g||_{L^2} +||V||_{L^2(A_{\delta,4\delta})})
\end{equation}
where $A_{s}=\{r<s,\;\theta\in (0,\alpha^*)\}$ and $A_{t,s}=\{t<r<s,\;\theta\in (0,\alpha^*)\}$.


Multiplying the first equation in (\ref{F19aca}) with $j=2$ by $\zeta_2V$ and integrating over $\Omega_\xi$, we get after integration by parts
$$
a(\zeta_2V,\zeta_2V)\leq c(\int_{\Omega_\xi}f_2\zeta_2 V+\int_0^{\Lambda/2} g_2\zeta_2Vdx.
$$
Using (\ref{F24b}) we get the estimate for  $\tau>C_\delta$
$$
\int_{\Omega_\xi}(|\nabla \zeta_2V|^2+\tau^2|\zeta_2V|^2)dXdY\leq C_0\tau^{-2}(||\zeta_2f||^2_{L^2}+\tau ||\zeta_2g||^2_{L^2})+C_\delta ||V||^2_{L^2(A_{\delta/2,3\delta})}.
$$
Now using local estimates with the parameter $\tau$ and partition of unity, we get
\begin{equation*}
||\nabla^2V||_{L^2(\Omega_\xi\setminus B_\delta)}\leq C_\delta\Big(||f||_{L^2(\Omega_\xi\setminus B_{\delta/2})}+||g||_{V^{1/2}(S_\xi\setminus B_{\delta/2})}+\tau^{1/2}||g||_{L^2(S_\xi\setminus B_{\delta/2})}\Big)+C_\delta ||V||_{L^2(A_{\delta/2,\infty})}
\end{equation*}
From this estimate and from (\ref{F24ba}) it follows the estimate (\ref{F9bx})  for $V=V_\tau$, $\tau=\tau_*$. The equation for $W=V_\tau-V_{\tau_*}$ is the following
\begin{eqnarray}\label{F19acaz}
&&{\mathcal L}_0W+\tau^2 W=(\tau_*^2-\tau^2)V_{\tau_*}+(C_\tau-C_{\tau_*})[{\mathcal L}_0,\zeta(r)\sin(\kappa\log r+\gamma)]\;\;\mbox{in $A$}\nonumber\\
&&{\mathcal B}_0W=0\;\;\mbox{on $(0,\infty)$}\nonumber\\
&&W(X,0)=0,\;\;\partial_XW|_{X=0}=0,\;\;\partial_XW|_{X=\Lambda/2}=0.
\end{eqnarray}
Using estimates (\ref{F19ab}) and (\ref{F19aa}), we obtain
$$
||W||_{L^2(\Omega_\xi)}\leq C\frac{1}{|\tau-\tau_k|}||V_{\tau_*}||_{L^2(\Omega_\xi)},
$$
which completes the proof.
\end{proof}

Let
\begin{eqnarray}\label{F23ab}
&&{\mathcal D}_\gamma^\beta=\{U=C\zeta(\tau R)\sin(\kappa\log \frac{1}{2}R+\gamma)+V\,:\, V\in \tilde{V}^2_\beta(\Omega_\xi),\,\;\mbox{$C$ is a constant}\nonumber\\
&&\mbox{and $V$ satisfies (\ref{F8aa})}.
\end{eqnarray}

Let also
$$
\widehat{n}(\tau)=\frac{\sin(\kappa\log(\tau/\widehat{\tau}_k)+\psi(\tau)-\psi(\widehat{\tau}_k))}{\tau/\widehat{\tau}_k-1}
$$
and
$$
\widehat{m}(X,Y,\tau)=\frac{U\tau-U_{\widehat{\tau}_k}}{\tau/\widehat{\tau}_k-1}.
$$

\begin{theorem}\label{TF9aa} Let $0\leq\beta<1$, $q_0\in (0,q)$ and
$$
\tau\in [q_0^{-1}\widehat{\tau}_k,q_0\widehat{\tau}_k],
$$
where that $k\geq k_\delta$ and $k_\delta$ is a sufficiently large integer depending on $\delta$.
Assume that $F\in L^2_\beta(\Omega_\xi)$ and $G\in V^{1/2}_\beta(S_\xi)$ satisfy
\begin{equation}\label{F25a}
\int_{\Omega_\xi}F\widehat{U}_kdXdY+\int_{S_\xi}G\widehat{U}_kds=0.
\end{equation}

Then there exists a unique solution $U\in {\mathcal D}_\gamma^\beta$ of the form (\ref{F23ab})
solving {\rm (\ref{F8a})}, {\rm (\ref{F8aa})}, satisfying
$$
\int_{\Omega_\xi}U\widehat{U}_kdXdY=0
$$
 and
\begin{eqnarray}\label{F9b}
&&\tau^{-\beta}|C|+\tau^{-1}||V||_{V^2_\beta(\Omega_\xi)}+\tau||V||_{L^2_\beta(\Omega_\xi)}\nonumber\\
&&\leq c\tau^{-1}(||F||_{L^2_\beta(\Omega_\xi)}
+||G||_{V^{1/2}_\beta(S_\xi)}+\tau^{1/2} ||G||_{L^2_\beta(S_\xi)}),
\end{eqnarray}
where $c$ does not depend on $\delta$. Moreover
\begin{equation}\label{F24bax}
c_0C_1\widehat{n}(\tau)=\int_{\Omega_\xi}\widehat{m}(X,Y,\tau) FdXdY+\int_{S_\xi}\widehat{m}(X,Y,\tau) Gds,
\end{equation}
\end{theorem}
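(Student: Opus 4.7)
The plan is to bootstrap the previous (unnumbered) Lemma to the weighted setting via the orthogonality condition \eqref{F25a}, in direct parallel with the way Lemma \ref{LF1} improved the half-line estimate. I begin by rewriting identity \eqref{F9bax} in a form that makes the improvement explicit. Since $\widehat{U}_{\widehat{\tau}_k}$ is an eigenfunction, $\mathcal{L}_0\widehat{U}_{\widehat{\tau}_k}+\widehat{\tau}_k^2\widehat{U}_{\widehat{\tau}_k}=0$, so the orthogonality \eqref{F25a} lets me subtract $\int F\widehat{U}_{\widehat{\tau}_k}+\int G\widehat{U}_{\widehat{\tau}_k}\,ds=0$ from the right-hand side of \eqref{F9bax}; then dividing numerator and denominator by $\tau/\widehat{\tau}_k-1$ converts \eqref{F9bax} into exactly \eqref{F24bax}. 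The function $\widehat{n}(\tau)$ is bounded away from zero on $[q_0^{-1}\widehat{\tau}_k,q_0\widehat{\tau}_k]$ because $q_0<q=e^{\pi/\kappa}$ keeps $\kappa\log(\tau/\widehat{\tau}_k)$ in $(-\pi,\pi)$ and the perturbation $\psi(\tau)-\psi(\widehat{\tau}_k)=O(e^{-2\tau\delta})$ is negligible, so \eqref{F24bax} already permits solving for $C$.

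Next I estimate $\widehat{m}(X,Y,\tau)$ in the dual of the spaces in which $F$ and $G$ live. By construction $\widehat{m}=(\widehat{U}_\tau-\widehat{U}_{\widehat{\tau}_k})/(\tau/\widehat{\tau}_k-1)$ is a mean-value expression for $\widehat{\tau}_k\,\partial_\tau\widehat{U}_{\tau'}$ at an intermediate $\tau'\in[q_0^{-1}\widehat{\tau}_k,q_0\widehat{\tau}_k]$; using the normalized eigenfunction asymptotics \eqref{M3c1}--\eqref{M3cb1} together with the explicit Bessel representation \eqref{Okt18cz} and the decay \eqref{F14bd} of its $W$-component, I obtain
\begin{equation*}
\Bigl(\int_{\Omega_\xi}R^{-2\beta}|\widehat{m}|^2\,dXdY\Bigr)^{1/2}\le c\,\widehat{\tau}_k^{-1-\beta},\qquad
\|\widehat{m}\|_{(V^{1/2}_\beta(S_\xi))^*}+\widehat{\tau}_k^{-1/2}\|\widehat{m}\|_{L^2_{-\beta}(S_\xi)}\le c\,\widehat{\tau}_k^{-1-\beta}.
\end{equation*}
Plugging these into \eqref{F24bax} via Cauchy--Schwarz and the lower bound on $\widehat{n}(\tau)$ yields
$\tau^{-\beta}|C|\le c\tau^{-1}\bigl(\|F\|_{L^2_\beta(\Omega_\xi)}+\|G\|_{V^{1/2}_\beta(S_\xi)}+\tau^{1/2}\|G\|_{L^2_\beta(S_\xi)}\bigr)$, which is the $C$-part of \eqref{F9b}.

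Having controlled $C$, I turn to the equation for $V$, which by \eqref{F14az} reads
\begin{equation*}
\mathcal{L}_0V+\tau^2V=F-C\tau^2\zeta(\tau R)\sin(\kappa\log\tfrac12 R+\gamma)-C[\mathcal{L}_0,\zeta\sin],\qquad
\mathcal{B}_0V=G-C[\mathcal{B}_0,\zeta\sin],
\end{equation*}
subject to \eqref{F8aa}. The new source terms involving $C$ contribute at most $c\tau^{1-\beta}|C|\le c(\|F\|_{L^2_\beta}+\cdots)$ in the relevant weighted norms, so their contribution is absorbed into the right-hand side of \eqref{F9b}. The remaining task is to propagate the improved estimate from $C$ to $V$: I split $V=h_V(r)\phi_0+W_V$ exactly as in \eqref{F20ba}--\eqref{K191a}, reducing the analysis to a half-line problem for $h_V$ plus the positive-definite problem for $W_V$. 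On the half-line part I apply Lemma \ref{LF1} in its improved form \eqref{J26bzz}, whose hypotheses (\ref{F11ba}), (\ref{M4a}) correspond precisely to the orthogonality \eqref{F25a} and to the normalization $\int U\widehat{U}_k\,dXdY=0$ (achieved by subtracting the unique multiple of $\widehat{U}_k$, which is in the kernel when $\tau=\widehat{\tau}_k$ and does not affect other values). On the transverse part I use the positivity of $a(\cdot,\cdot)$ restricted to $Y_0$ from Lemma \ref{LM7} together with standard local elliptic estimates in weighted spaces in $\Omega_\xi\setminus B_{\delta/2}$, exactly as in the proof of the previous Lemma.

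The main obstacle is the weighted estimate on $\widehat{m}$: because $\widehat{U}_\tau$ depends on $\tau$ both through the scaling in $K_{i\kappa}(\tau r)+Q(\tau)I_{i\kappa}(\tau r)$ and through the small correction $Q(\tau)$, the differentiation-in-$\tau$ produces a term that grows logarithmically near $r=0$ unless one peels off the leading $\sin(\kappa\log\tfrac12 r+\gamma)$-component, and this peeling must be tracked carefully against the weight $r^\beta$ with $\beta<1$. Once this bound is secured, the uniqueness in $\mathcal{D}_\gamma^\beta$ under the orthogonality constraint is automatic: any homogeneous solution is forced by the previous Lemma to be a multiple of $\widehat{U}_k$, which the normalization $\int U\widehat{U}_k=0$ rules out.
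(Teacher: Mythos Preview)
Your approach differs substantively from the paper's. The paper does \emph{not} obtain \eqref{F9b} by estimating $\widehat m$ in weighted dual norms and then invoking the half-line Lemma~\ref{LF1}. Instead it proceeds in two stages: first it establishes the case $\beta=0$ by the reference-$\tau_*$ device already used in the preceding Lemma (solve at a fixed $\tau_*=\tfrac12 q\tau_k$ away from the eigenvalue, then control $V_\tau-V_{\tau_*}$ via self-adjointness together with the orthogonality \eqref{F25a}, which is what upgrades the factor $|\tau-\widehat\tau_k|^{-1}$ to $\tau^{-1}$); second, for $\beta\in(0,1)$, it localises near the vertex by $W_\beta=\zeta U$, applies Proposition~\ref{PF22}(i) to the resulting angle problem, and reduces the remainder $U_0=(1-\zeta)U$ to the $\beta=0$ case already in hand. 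Your route is closer in spirit to the two lemmas of Section~2.4 for the infinite angle $A$, transplanted to $\Omega_\xi$; what it buys is a single direct argument rather than a bootstrap, at the cost of having to move between bounded and unbounded domains.

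Two concrete issues with your version. First, the exponent in your $\widehat m$-bound is off: a scaling computation with $\widehat m\sim \widehat\tau_k\, rK'_{i\kappa}(\tau' r)$ gives $\|\widehat m\|_{L^2_{-\beta}(\Omega_\xi)}\sim\widehat\tau_k^{\,\beta-1}$, not $\widehat\tau_k^{-1-\beta}$; the correct bound still yields the $C$-part of \eqref{F9b}, so this is a slip rather than a fatal error. Second, and more substantively, the splitting \eqref{F20ba}--\eqref{K191a} and Lemma~\ref{LF1} are stated for the infinite angle $A$ and the half-line $(0,\infty)$, whereas in $\Omega_\xi$ the $h$-component is defined only for $r<3\delta$ (see \eqref{F12c}--\eqref{F12ca}). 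To carry out your program you must either pass to the interval problem of Section~\ref{SecM7} (for which no analogue of the improved estimate \eqref{J26bzz} is recorded) or insert a localisation/extension step reconciling the bounded and unbounded settings; the paper's two-step bootstrap sidesteps this entirely. Likewise, the identification you assert between \eqref{F25a}, $\int U\widehat U_k=0$ and the half-line hypotheses \eqref{F11ba}, \eqref{M4a} is only approximate, since $\widehat U_k$ carries a nonzero (though exponentially small) $W$-component \eqref{F14bd} beyond its $h$-component.
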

\begin{proof} We start from the estimate of the constant $C$ in (\ref{F23ab}).
\begin{equation}\label{F25aa}
|C
|\leq \frac{c}{|\tau|^{\beta-1}}(||F||_{L^2(\Omega_\xi)}+\tau^{1/2}||G||_{L^2(S_\xi)}).
\end{equation}

Since the operator is self-adjoint and due to (\ref{F25a}) we have
$$
||U||_{L^2(\Omega_\xi)}\leq \frac{c}{\tau|\tau-\widehat{\tau}_k|}||F||_{L^2(\Omega_\xi)},\;\;\mbox{if $G=0$}.
$$
and due to (\ref{F14az}) we get
\begin{equation}\label{F19abz}
||V||_{L^2(\Omega_\xi)}\leq \frac{c}{\tau^2}||F||_{L^2(\Omega_\xi)}\;\;\mbox{in the case $G=0$}.
\end{equation}

Now consider the case $\beta=0$.  We take the same $\tau_*$ as in that lemma and construct solution $U_{\tau_*}$, which satisfies the estimate (\ref{F9b}).
The function $W=V_\tau-V_{\tau_*}$ satisfies the problem
\begin{eqnarray}\label{F26a}
&&L_0W+\tau^2 W=(\tau_*^2-\tau^2)V_{\tau_*}+(C_\tau-C_{\tau_*})[L_0,\zeta(r)\sin(\kappa\log r+\gamma)]\;\;\mbox{in $A$}\nonumber\\
&&B_0W=0\;\;\mbox{on $(0,\infty)$}\nonumber\\
&&W(X,0)=0,\;\;W|_{x=0}=0,\;\;W|_{x=\Lambda/2}=0.
\end{eqnarray}
Using estimates (\ref{F25aa}), (\ref{F19abz}) and (\ref{F9bx}), for $\tau=\tau_*$, we arrive at (\ref{F9b}) for $\beta=0$.

The case of  $\beta\in (0,1)$ is obtained by using first a local estimate near the vertex $(0,\xi(0))$.
We represent solution $U\in {\mathcal D}_\gamma^\beta$ as $U=U_0+W_\beta$, where $W_\beta=\zeta U$ solves the problem
\begin{eqnarray}\label{F27a}
&&L_0W_\beta+\tau^2 W_\beta=\zeta f+[L_0,\zeta]U\;\;\mbox{in $A$}\nonumber\\
&&B_0W_\beta=\zeta g\;\;\mbox{on $(0,\infty)$}\nonumber\\
&&W_\beta(X,0)=0,\;\;W_\beta|_{x=0}=0,\;\;W_\beta|_{x=\Lambda/2}=0.
\end{eqnarray}
By Proposition \ref{PF22}(i) this problem has a solution and it satisfies
$$
||W_\beta||_{V^2_\beta(A)}\leq c(\tau^2||W_\beta||_{L^2_\beta(A)}+||[L_0,\zeta]U||_{L^2_\beta(A)}+||\zeta f||_{L^2_{\beta}(A)}+||\zeta g||_{L^2_{\beta}(0,\infty)})
$$
Then for $U_0$ we obtain the problem
\begin{eqnarray}\label{F27aa}
&&L_0U_0+\tau^2 U_0=(1-\zeta )f+[L_0,\zeta]U\;\;\mbox{in $\Omega_\xi$}\nonumber\\
&&B_0U_0=(1-\zeta) g\;\;\mbox{on $S_\xi$}\nonumber\\
&&U_0(X,0)=0,\;\;U_0|_{X=0}=0,\;\;U_0|_{X=\Lambda/2}=0.
\end{eqnarray}
Since the right-hand side here belongs to $L^2(\Omega_\xi)$ and $L^2(S_\xi)$ respectively we can apply the estimate (\ref{F9b}) for $\beta=0$ which we have proved already. Using we assume  $\beta>0$ this is sufficient to complete the proof.
\end{proof}

\subsection{Spectral problem for perturbed operator}\label{Spert}

 Let $\beta\in (1-\alpha,1)$. Introduce
 $$
 X^\beta_\gamma=\{U=C\zeta w_\gamma+v\,:\, v\in V^{2}_\beta(\Omega),\; v\;\mbox{satisfies  (\ref{F8aaa})}\},
 $$
 where
 $$
 w_\gamma =\sin(\kappa\log \frac{1}{2}r+\gamma)\phi_0\;\;\mbox{and $C$ is constant}.
 $$

 We consider in this section operators $({\mathcal L}_0,{\mathcal B}_0)$ and $({\mathcal L},{\mathcal B})$ with the same domain $X^\beta_\gamma$, $\beta\in (1-\alpha,1)$. By using local estimates (see Proposition \ref{P2}) one can check that the corresponding eigenfunctions belongs to ${\mathcal D}_\gamma$. So it is enough to perform the proof for the domain $X^\beta_\gamma$.


In Sect.\ref{SM7aa} we have found large negative eigenvalues  $\widehat{\lambda}_k=-\widehat{\tau}_k^2$, $k\geq k_\delta$, of the unperturbed operator and corresponding normalized eigenfunctions ${\bf U}_k$
$$
{\mathcal L}_0{\bf U}_k+\widehat{\tau}_k^2{\bf U}_k,\;\;{\mathcal B}_0{\bf U}_k =0.
$$
We are looking for a solution to
$$
{\mathcal L}U=\tilde{\lambda} U,\;\;{\mathcal B}U =0
$$
from the space $ X^\beta_\gamma$ in the form
\begin{equation}\label{M1aa}
U={\bf U}_k+V,\;\;\int_{\Omega_\xi} {\bf U}_k VdXdY=0,
\end{equation}
where
\begin{equation}\label{M1a}
V=C\zeta(\tau r)w_\gamma+v,\;\;v\in\tilde{V}^2_\beta(\Omega_\xi).
\end{equation}
Therefore,
\begin{equation}\label{D11a}
{\mathcal L}({\bf U}_k+V)=\tilde{\lambda} ({\bf U}_k+V),\;\;{\mathcal B}({\bf U}_k+V)=0.
\end{equation}

First we consider (\ref{D11a}) as equation with respect to $V$:
\begin{equation}\label{D11ac}
({\mathcal L}_0-\tilde{\lambda})V=(\tilde{\lambda}-\widehat{\lambda}_k){\bf U}_k-{\mathcal L}_1({\bf U}_k+V),\;\;{\mathcal B}_0V=-{\mathcal B}_1({\bf U}_k+V).
\end{equation}
According to Proposition \ref{TF9aa}  for solvability od this problem with respect to $V$  we must require
\begin{equation}\label{F10a}
\Big((\tilde{\lambda}-\widehat{\lambda}_k){\bf U}_k-{\mathcal L}_1({\bf U}_k+V),{\bf U}_k\Big)_{\Omega_\xi}-\Big({\mathcal B}_1({\bf U}_k+V),{\bf U}_k\Big)_{S_\xi}=0,
\end{equation}
where $(\cdot,\cdot)_{\Omega_\xi}$ and $(\cdot,\cdot)_{S_\xi}$ are inner products in $L^2(\Omega_\xi)$ and $L^2(S_\xi)$ respectively.
To guarantee the solvability condition for (\ref{D11ac}) we replace the right hand side in (\ref{D11ac})  as follows
\begin{eqnarray}\label{D11bb}
&&({\mathcal L}_0-\tilde{\lambda})V={\mathcal N}V+{\mathcal F}\nonumber\\
&&{\mathcal B}_0V=-{\mathcal B}_1(\widehat{U}_k+V).
\end{eqnarray}
where
$$
{\mathcal N}V=-{\mathcal L}_1V+\Big((V,{\mathcal L}_1{\bf U}_k)_{\Omega_\xi}+(V,{\mathcal B}_1{\bf U}_k)_{S_\xi}\Big){\bf U}_k
$$
and
\begin{eqnarray*}
&&{\mathcal F}=(\widetilde{\lambda}-\widehat{\lambda}_k) {\bf U}_k-{\mathcal L}_1{\bf U}_k+\Big({\bf U}_k,{\mathcal L}_1{\bf U}_k)_{\Omega_\xi}+({\bf U}_k,{\mathcal B}_1{\bf U}_k)_{S_\xi}-(\tilde{\lambda}-\widehat{\lambda}_k) ({\bf U}_k,{\bf U}_k)\Big){\bf U}_k\\
&&=-{\mathcal L}_1{\bf U}_k+\Big({\bf U}_k,{\mathcal L}_1{\bf U}_k)_{\Omega_\xi}+({\bf U}_k,{\mathcal B}_1{\bf U}_k)_{S_\xi}\Big){\bf U}_k
\end{eqnarray*}
Now the right hand side of (\ref{D11bb}) satisfies (\ref{F25a}) and according to Theorem \ref{TF9aa} the function $V$ exists and satisfies the estimate
\begin{eqnarray}\label{M3a}
&&\tau^{-\beta}|C|+\tau^{-1}||v||_{V^2_\beta(\Omega_\xi)}+\tau||v||_{L^2_\beta(\Omega_\xi)}\nonumber\\
&&\leq c\tau^{-1}(||{\mathcal N}V+{\mathcal F}||_{L^2_\beta(\Omega_\xi)}
+||{\mathcal B}_1({\bf U}_k+V)||_{V^{1/2}_\beta(S_\xi)}+\tau^{1/2} ||{\mathcal B}_1({\bf U}_k+V)||_{L^2_\beta(S_\xi)}).
\end{eqnarray}
Using (\ref{M1a}) together with {F16aa}--(\ref{F22ac}), we get
$$
||{\mathcal N}V||_{L^2_\beta(\Omega_\xi)}\leq c\Big(|C|\tau^{-\alpha-\beta}+\delta^\alpha ||v||_{V^2_\beta(\Omega_\xi)}+C_\delta (||\nabla v||_{L^2_\beta(\Omega_\xi)}+||V||_{L^2_\beta(\Omega_\xi)})\Big)
$$
and
\begin{eqnarray*}
&&||{\mathcal B}_1V||_{V^{1/2}_\beta(S_\xi)}+\tau^{1/2} ||{\mathcal B}_1V||_{L^2_\beta(S_\xi)}\leq c\Big(|C|\tau^{-\alpha-\beta}+\delta^\alpha(||v||_{V^{2}_\beta(\Omega_\xi)}+\tau^{1/2} ||v||_{V^1_\beta(\Omega_\xi)})\\
&&+C_\delta (||v||_{L^2_\beta(\Omega_\xi)}+\tau^{1/2} ||v||_{L^2_\beta(\Omega_\xi)})\Big).
\end{eqnarray*}
The last two estimates applied to (\ref{M3a}) imply
\begin{eqnarray}\label{M3aa}
&&\tau^{-\beta}|C|+\tau^{-1}||v||_{V^2_\beta(\Omega_\xi)}+\tau||v||_{L^2_\beta(\Omega_\xi)}\nonumber\\
&&\leq c\tau^{-1}(||{\mathcal F}||_{L^2_\beta(\Omega_\xi)}
+||{\mathcal B}_1{\bf U}_k||_{V^{1/2}_\beta(S_\xi)}+\tau^{1/2} ||{\mathcal B}_1{\bf U}_k||_{L^2_\beta(S_\xi)})
\end{eqnarray}
for large $\tau$.
Using again (\ref{M1a}), we get
\begin{eqnarray}\label{M3aaa}
&&\tau^{-\beta}|C|+\tau^{-1}||v||_{V^2_\beta(\Omega_\xi)}+\tau||v||_{L^2_\beta(\Omega_\xi)}\nonumber\\
&&\leq c\tau^{-1}(||{\mathcal F}||_{L^2_\beta(\Omega_\xi)}
+||{\mathcal B}_1{\bf U}_k||_{V^{1/2}_\beta(S_\xi)}+\tau^{1/2} ||{\mathcal B}_1{\bf U}_k||_{L^2_\beta(S_\xi)}).
\end{eqnarray}

Now using estimates (\ref{M3c1})-(\ref{M3cb1}), we obtain
\begin{equation}\label{M7b}
||{\mathcal F}||_{L^2_\beta(\Omega_\xi)}\leq c\widehat{\tau_k}^{2-\beta-\alpha}
\end{equation}
and
\begin{equation}\label{M7ba}
||{\mathcal B}_1{\bf U}_k||_{V^{1/2}_\beta(S_\xi)}+\tau^{1/2} ||{\mathcal B}_1{\bf U}_k||_{L^2_\beta(S_\xi)})\leq c\widehat{\tau_k}^{2-\beta-\alpha}.
\end{equation}
Therefore
\begin{equation}\label{Apr26a}
\tau^{-\beta}|C|+\tau^{-1}||v||_{V^2_\beta(\Omega_\xi)}+\tau||v||_{L^2_\beta(\Omega_\xi)}\leq  c\widehat{\tau_k}^{1-\beta-\alpha}.
\end{equation}

Equation for $\widetilde{\lambda}$ is (\ref{F10a}) which is
\begin{equation}\label{D11bag}
(\tilde{\lambda}-\widehat{\lambda}_k)=\Big({\mathcal L}_1({\bf U}_k+V),{\bf U}_k\Big)_{\Omega_\xi}+\Big({\mathcal B}_1({\bf U}_k+V),{\bf U}_k\Big)_{S_\xi}.
\end{equation}

Furthermore, by (\ref{M3aa}), (\ref{M7b}) and (\ref{M7ba})
$$
|\Big({\mathcal L}_1({\bf U}_k+V),{\bf U}_k\Big)_{\Omega_\xi}|\leq C ||{\mathcal L}_1({\bf U}_k+V)||_{L^2_{\beta}(\Omega_\xi)}
||{\bf U}_k||_{L^2_{-\beta}(\Omega_\xi)}.
$$
Since $\beta<1$ we have by  (\ref{M3c1})
$$
||{\bf U}_k||_{L^2_{-\beta}(\Omega_\xi)}\leq c\widehat{\tau}_k^{\beta}.
$$
Using that $\beta<1-\alpha$ we obtain by (\ref{M3cb1}) and (\ref{Apr26a})
$$
||{\mathcal L}_1({\bf U}_k+V)||_{L^2_{\beta}(\Omega_\xi)}\leq \widehat{\tau}_k^{2-\beta-\alpha}.
$$
Hence
$$
|\Big({\mathcal L}_1({\bf U}_k+V),{\bf U}_k\Big)_{\Omega_\xi}|\leq c\widehat{\tau}_k^{2-\alpha}.
$$

Now let us turn to the boundary term in the right-hand side of (\ref{D11bag}). We have
$$
|\Big({\mathcal B}_1({\bf U}_k+V),{\bf U}_k\Big)_{S_\xi}|\leq c||{\mathcal B}_1({\bf U}_k+V)||_{L^2_{\beta-1/2}(S_\xi)}||{\bf U}_k||_{L^2_{-\beta+1/2}(S_\xi)}.
$$
Using definition of ${\bf U}_k$ in Sect.\ref{SM7aa}, we get
$$
||{\bf U}_k||_{L^2_{-\beta+1/2}(S_\xi)}\leq c\widehat{\tau}_k^\beta
$$
Since
$$
||\nabla v||_{L^2_{\beta-1/2}(S_\xi)}\leq c||v||_{V^2_\beta(\Omega_\xi)}
$$
and
$$
||v||_{L^2_{\beta-3/2}(S_\xi)}\leq c||v||_{V^1_{\beta-1}(\Omega_\xi)},
$$
we get by means of (\ref{Apr26a}) and (\ref{M3c1})--(\ref{M3cb1})
$$
|\Big({\mathcal B}_1({\bf U}_k+V),{\bf U}_k\Big)_{S_\xi}|\leq c\widehat{\tau}_k^{2-\beta-\alpha}.
$$
Thus the right-hand side of (\ref{D11bag}) is estimated as $O(\tau^{2-\alpha})$ which leads to the formula
$$
\widetilde{\lambda}-\widehat{\lambda}_k=O(\widehat{\tau}_k^{2-\alpha})
$$
The last relation gives (1.14) if we use that the right hand side in (4.66) is continuous with respect to $t$ and $\widetilde{\lambda}$ and that we can apply the same procedure to the perturbation $({\mathcal L}_0+t{\mathcal L}_1,{\mathcal B}_0+t{\mathcal B}_1)$, $t\in [0,1]$, and observe that the eigenvalue cannot leave the interval
$$
\widetilde{\lambda}\in [\widehat{\lambda}_k-c\widehat{\tau}_k^{2-\alpha},\widehat{\lambda}_k+c\widehat{\tau}_k^{2-\alpha}].
$$

\begin{remark}\label{RM1} In the above proof we obtain also the  asymptotic formula (\ref{M1aa}) for the eigenfunction corresponding  to the eigenvalue $-s_k^2$. The function $V$ having the representation (\ref{M1a})  can be considered as a remainder and is estimated as
\begin{equation}\label{M1b}
s_k^{-\beta}|C|+s_k^{-1}||v||_{V^2_\beta(\Omega_\xi)}+s_k||v||_{L^2_\beta(\Omega_\xi)}\leq cs_k^{1-\alpha-\beta}
\end{equation}
(see (\ref{Apr26a})). Here as before
 $\beta\in (1-\alpha,1)$. The estimate (\ref{M1b}) implies the following estimates for the constant $C$ and $L^2$ norm of $v$ in the representation  (\ref{M1a})
\begin{equation}\label{M1ba}
|C|\leq cs_k^{1-\alpha}\;\;\;\mbox{and}\;\;\;||v||_{L^2(\Omega_\xi)}\leq cs_k^{-\alpha}.
\end{equation}
Finally, we get
\begin{equation}\label{M1bad}
||V||_{L^2(\Omega_\xi)}\leq cs_k^{-\alpha}.
\end{equation}
\end{remark}

\section{Appendix}

\subsection{Derivation of the spectral problem for water waves of extreme form}\label{SecWater}

In this section we will derive the linear system that arises as a linearization of the water wave problem near an extreme Stokes wave. The stream function formulation for steady waves on the free surface of rotational flows of finite depth is given by

\begin{subequations}\label{K2a}
	\begin{alignat}{2}
		\Delta\psi+\omega(\psi)&=0 &\qquad& \text{in } \hat{D},\label{psilap} \\
		\tfrac 12|\nabla\psi|^2 + y  &= R &\quad& \text{on } \hat{S},\label{psibern}\\
		\psi  &= m &\quad& \text{on }\hat{S},\label{psitop}\\
		\psi  &= 0 &\quad& \text{on }\hat{B}.\label{psibot}
	\end{alignat}
\end{subequations}

Here $\psi$ is the stream function, $\omega$ is the vorticity, $m > 0$ is the relative mass flux and $R$ is the Bernoulli constant. The unknown region $\hat{D}$ is defined as
\[
	\hat{D} = \{ (x,y) \in \R^2: \ \ 0< y < \eta(x) \},	
\]
while $\hat{S}$ and $\hat{D}$ stand for the upper and lower boundaries respectively. For more details about the derivation of \eqref{K2a} we refer to the book \cite{Constantin11b}.

Throughout this section we will assume that $(\psi,\eta)$ is an extreme Stokes wave solution, that is
\[
	\eta(0) = R, \ \ \psi_x (0,R) = \psi_y (0,R) = 0,
\]
and
\[
	\lim_{x\to 0+} \eta'(x)=-\frac{1}{\sqrt{3}}.
\]
The solution is even in $x$-variable and has period $\Lambda$. As for the regularity, we initially have $\psi \in C^1(\overline{\hat{D}})$ and $\eta \in C(\R)$. However, outside the stagnation points the regularity is better as recently shown in \cite{Koz2020asymp}. More precisely, we can assume that
\begin{equation}\label{etax}
	\eta_x = -\frac{1}{\sqrt{3}} + a_1 x^{\tfrac12} + a_2 x + f_1(x),
\end{equation}
where $f_1 \in C^{1}(\overline{\Omega})$, $f_1(x) = O(|x|^{3/2(\tau_1-1)})$, $f'_1(x) = O(|x|^{3/2(\tau_1-1)-1})$ and $f''_1(x) = O(|x|^{3/2(\tau_1-1)-2})$ as $x \to 0$. Here $\tau_1 \approx 1.8$ is the smallest root of $\tau_1 = - \tfrac{1}{\sqrt{3}} \cot(\tfrac{\pi}{2} \tau_1)$, while $\Omega = \{(x,y) \in \hat{D}: 0 < x < \Lambda/2\}$. In order to specify the regularity of $\psi$ it is convenient to introduce polar coordinates
\begin{equation}\label{D15a}
	x=r\sin\theta,\;\;R-y=r\cos\theta,
\end{equation}
where $\theta = 0$ corresponds to the vertical line $x=0$. Therefore, we have $\theta \to \pi/3$ along the surface as $x \to 0+$. Thus, the corresponding representation of $\psi$ is
\begin{equation}\label{Dec11a}
	\psi(x,y)=m-\tfrac{2}{3}r^{3/2}\cos \Big(\tfrac32 \theta \Big)+f_2(x,y),
\end{equation}
where $f_2 \in C^2(\overline{\Omega})$ and $f_2(x,y) = O(r^2)$ as $r \to 0$.

Now we can formally take variations in \eqref{K2a} with respect to $\psi$ and $\eta$. Thus, if $u$ and $\zeta$ are the corresponding variations of $\psi$ and $\eta$ respectively, then the linear spectral problem associated with \eqref{K2a} is
\begin{subequations}\label{K2a}
	\begin{alignat}{2}
		\Delta u+\omega'(\psi)u&=\lambda u &\qquad& \text{in } \Omega,\label{ulapp} \\
		\nabla \psi \cdot \nabla u + \left( 1 + \psi_x \psi_{xy} + \psi_y \psi_{yy} \right) \zeta  &= 0 &\quad& \text{on } y = \eta,\label{ubern}\\
		u + \psi_y \zeta   &= 0 &\quad& \text{on }y = \eta,\label{utop}\\
		u  &= 0 &\quad& \text{on } y=0. \label{ubot}
	\end{alignat}
\end{subequations}
Note that we can express $\zeta = - u / \psi_y$ from \eqref{utop}, so that the boundary relation \eqref{ubern} becomes
\[
\nabla \psi \cdot \nabla u - \frac{u}{\psi_y} \left( 1 + \psi_x \psi_{xy} + \psi_y \psi_{yy} \right) = 0.
\]
Taking into account equations for $\psi$, we can rewrite this equation as
\[
\partial_{\nu} u - r^{-1} \rho u = 0,
\]
where
\[
	\rho = r \frac{1-\omega(1)(1+\eta'^2)\psi_y(x,\eta(x)) + \eta'' \psi_y^2(x,\eta(x))}{2(R-\eta)\sqrt{1+\eta'^2} }.
\]
Now it follows from \eqref{etax} and \eqref{Dec11a} that
\[
\rho = \frac{\sqrt{3}}2 + O(x^{\tfrac12}), \ \ \rho' = O(x^{-\tfrac12})  \ \ \text{as} \ \ x \to 0.
\]
Thus, the smallest positive root $\mu_1$ to
\[
	\mu \tan(\tfrac{\pi}{3} \mu) = - \tfrac{\sqrt{3}}{2}
\]
equals to $(3/2) \tau_1$, where $\tau_1 \approx 1.8$. We see that all assumptions of Theorem 1.1 are fulfilled for the system
\begin{subequations}\label{sysu}
	\begin{alignat}{2}
		\Delta u+\omega'(\psi)u&=\lambda u &\qquad& \text{in } \Omega,\label{ulapp1} \\
		\partial_{\nu} u - r^{-1} \rho u  &= 0 &\quad& \text{on } y=\eta,\label{ubern1}\\
		u_x &= 0 &\quad& \text{on } x=0, \ x = \tfrac12\Lambda,\label{ux1}\\
		u  &= 0 &\quad& \text{on } y=0, \label{ubot1}
	\end{alignat}
\end{subequations}
where $\alpha^* = \pi/3$, $\alpha = 1/2$, $a_0 = 1/\sqrt{3}$ and $\rho_0 = \frac{\sqrt{3}}2$. Furthermore, the constant $\kappa \approx 1.07$ is defined as a solution to \eqref{K49zq}.

\bigskip
\noindent {\bf Acknowledgements.} V.~K. was supported by the Swedish Research
Council (VR), 2017-03837.

\bibliographystyle{siam}
\bibliography{bibliography}

\begin{thebibliography}{10}

\bibitem{AmickFraenkelToland82}
{\sc C.~J. Amick, L.~E. Fraenkel, and J.~F. Toland}, {\em On the {S}tokes
  conjecture for the wave of extreme form}, Acta Math., 148 (1982),
  pp.~193--214.

\bibitem{AmickToland81a}
{\sc C.~J. Amick and J.~F. Toland}, {\em On periodic water-waves and their
  convergence to solitary waves in the long-wave limit}, Philos. Trans. Roy.
  Soc. London Ser. A, 303 (1981), pp.~633--669.

\bibitem{AmickToland81b}
{\sc C.~J. Amick and J.~F. Toland}, {\em On solitary water-waves of finite
  amplitude}, Arch. Rational Mech. Anal., 76 (1981), pp.~9--95.

\bibitem{BuffoniDancerToland00b}
{\sc B.~Buffoni, E.~N. Dancer, and J.~F. Toland}, {\em The sub-harmonic
  bifurcation of {S}tokes waves}, Arch. Ration. Mech. Anal., 152 (2000),
  pp.~241--271.

\bibitem{Chen1980}
{\sc B.~Chen and P.~G. Sallman}, {\em Numerical evidence for the existence of
  new types of gravity waves of permanent form on deep water}, Studies in
  Applied Mathematics, 62 (1980), pp.~1--21.

\bibitem{Constantin11b}
{\sc A.~Constantin}, {\em Nonlinear water waves with applications to
  wave-current interactions and tsunamis}, vol.~81 of CBMS-NSF Regional
  Conference Series in Applied Mathematics, Society for Industrial and Applied
  Mathematics (SIAM), Philadelphia, PA, 2011.

\bibitem{Dunster1990}
{\sc T.~M. Dunster}, {\em Bessel functions of purely imaginary order, with an
  application to second-order linear differential equations having a large
  parameter}, {SIAM} Journal on Mathematical Analysis, 21 (1990),
  pp.~995--1018.

\bibitem{Dyachenko2019a}
{\sc S.~A. Dyachenko and V.~M. Hur}, {\em Stokes waves with constant vorticity:
  I. numerical computation}, Studies in Applied Mathematics, 142 (2019),
  pp.~162--189.

\bibitem{KeadyNorbury78}
{\sc G.~Keady and J.~Norbury}, {\em On the existence theory for irrotational
  water waves}, Math. Proc. Cambridge Philos. Soc., 83 (1978), pp.~137--157.

\bibitem{JOY2008}
{\sc J.~Ko and W.~Strauss}, {\em Effect of vorticity on steady water waves},
  Journal of Fluid Mechanics, 608 (2008), pp.~197--215.

\bibitem{Koz2020asymp}
{\sc V.~Kozlov and E.~Lokharu}, {\em Asymptotics for steady waves near
  stagnation points}, preprint,  (2020).

\bibitem{LokharuKoz2020}
{\sc V.~Kozlov and E.~Lokharu}, {\em Global bifurcation and highest waves on
  water of finite depth}, Submitted to Archive for Rational Mechanics and
  Analysis,  (2020).

\bibitem{KMR97}
{\sc V.~Kozlov, V.~Maz'ya, and J.~Rossmann}, {\em {Elliptic boundary value
  problems in domains with point singularities}}, Mathematical surveys and
  monographs, American Mathematical Society, Providence, RI, 1997.

\bibitem{Krall1982}
{\sc A.~M. Krall}, {\em Boundary values for an eigenvalue problem with a
  singular potential}, Journal of Differential Equations, 45 (1982),
  pp.~128--138.

\bibitem{Marlettta2009}
{\sc M.~Marlettta and G.~Rozenblum}, {\em A laplace operator with boundary
  conditions singular at one point}, Journal of Physics A: Mathematical and
  Theoretical, 42 (2009), p.~125204.

\bibitem{MP78}
{\sc P.-B. Maz’ya, V.G.}, {\em Lp-estimates of solutions of elliptic
  boundaryvalue problems in domains with ribs. (russian)}, Trudy Moskov. Mat.
  Obshch. 37,  (1978).
\newblock English translation: Trans. Moscow Math. Soc. 1 (1980).

\bibitem{Nazarov1994}
{\sc S.~Nazarov and B.~A. Plamenevsky}, {\em Elliptic Problems in Domains with
  Piecewise Smooth Boundaries}, {DE} {GRUYTER}, 1994.

\bibitem{Nazarov2018}
{\sc S.~A. Nazarov and N.~Popoff}, {\em Self-adjoint and skew-symmetric
  extensions of the laplacian with singular robin boundary condition}, Comptes
  Rendus Mathematique, 356 (2018), pp.~927--932.

\bibitem{Plotnikov82}
{\sc P.~I. Plotnikov}, {\em Justification of the {S}tokes conjecture in the
  theory of surface waves}, Dinamika Sploshn. Sredy,  (1982), pp.~41--76.

\bibitem{Plotnikov2004}
{\sc P.~I. Plotnikov and J.~F. Toland}, {\em Convexity of stokes waves of
  extreme form}, Archive for Rational Mechanics and Analysis, 171 (2004),
  pp.~349--416.

\bibitem{Stokes49}
{\sc G.~G. Stokes}, {\em On the theory of oscillatory waves}, Trans. Cambridge
  Phil. Soc., 8 (1849), pp.~441--455.

\bibitem{Stokes80}
{\sc G.~G. Stokes}, {\em Considerations relative to the greatest height of
  oscillatory irrotational waves which can be propogated without change of
  form}, Mathematical and Physical Papers, 1 (1880), pp.~225--228.

\bibitem{Toland1978a}
{\sc J.~Toland}, {\em On the existence of a wave of greatest height and
  {S}tokes's conjecture}, Proceedings of the Royal Society of London. A.
  Mathematical and Physical Sciences, 363 (1978), pp.~469--485.

\bibitem{Vanden_Broeck_1983}
{\sc J.-M. Vanden-Broeck}, {\em Some new gravity waves in water of finite
  depth}, Physics of Fluids, 26 (1983), p.~2385.

\bibitem{VandenBroeck2013}
{\sc J.-M. Vanden-Broeck}, {\em On periodic and solitary pure gravity waves in
  water of infinite depth}, Journal of Engineering Mathematics, 84 (2013),
  pp.~173--180.

\bibitem{VandenBroeck2017}
\leavevmode\vrule height 2pt depth -1.6pt width 23pt, {\em New families of pure
  gravity waves in water of infinite depth}, Wave Motion, 72 (2017),
  pp.~133--141.

\bibitem{Varvaruca09}
{\sc E.~Varvaruca}, {\em On the existence of extreme waves and the {S}tokes
  conjecture with vorticity}, J. Differential Equations, 246 (2009),
  pp.~4043--4076.

\bibitem{Varvaruca2011}
{\sc E.~Varvaruca and G.~S. Weiss}, {\em A geometric approach to generalized
  {S}tokes conjectures}, Acta Mathematica, 206 (2011), pp.~363--403.

\bibitem{Varvaruca2012}
\leavevmode\vrule height 2pt depth -1.6pt width 23pt, {\em The {S}tokes
  conjecture for waves with vorticity}, Annales de l{\textquotesingle}Institut
  Henri Poincare (C) Non Linear Analysis, 29 (2012), pp.~861--885.

\end{thebibliography}

\end{document}